\theoremstyle{definition}
\newtheorem{prop}{Proposition}[section]
\newtheorem{ourthm}[prop]{Theorem}
\newtheorem{lem}[prop]{Lemma}
\newtheorem{lemma}[prop]{Lemma}
\newtheorem{cor}[prop]{Corollary}
\theoremstyle{remark}
\newtheorem{exa}[prop]{Example}
\newtheorem{rem}[prop]{Remark}
\newtheorem{defn}[prop]{Definition}
\newtheorem{prob}[prop]{Problem}
\newcommand{\NN}{\mathbb{N}}
\newcommand{\fG}{\mathfrak{G}}
\newcommand{\fGb}{\fG}
\newcommand{\fL}{\mathfrak{L}}
\newcommand{\fLb}{\fL}
\newcommand{\topLas}{\widehat{\fL}}
\newcommand{\topGro}{\widehat{\fG}}
\newcommand{\fS}{\mathfrak{S}}
\newcommand{\KKD}{\mathsf{KKD}}
\newcommand{\X}{\mathsf{X}}
\newcommand{\rev}{\mathsf{rev}}
\newcommand{\supp}{\mathsf{supp}}
\newcommand{\wt}{\mathsf{wt}}
\newcommand{\Z}{\mathbb{Z}}
\newcommand{\Q}{\mathbb{Q}}
\newcommand{\ex}{\mathsf{ex}}
\newcommand{\dark}{\mathsf{dark}}
\newcommand{\snow}{\mathsf{snow}}
\newcommand{\rajcode}{\mathsf{rajcode}}
\newcommand{\raj}{\mathsf{raj}}
\newcommand{\Inv}{\mathsf{Inv}}
\newcommand{\snowflake}{\Asterisk}
\newcommand{\invcode}{\mathsf{invcode}}
\newcommand{\inv}{\mathsf{inv}}
\newcommand{\Rook}{\mathsf{Rook}}
\newcommand{\Stair}{\mathsf{Stair}}
\newcommand{\GR}{\mathsf{GR}}
\newcommand{\NW}{\mathsf{NW}}
\newcommand{\Rowone}[1]
{{\mathsf{{Row}}}_1(P(#1))}
\newcommand{\cb}[1]{{\color{blue}{\bm{$#1$}}}}
\newcommand{\Hilb}{\mathrm{Hilb}}
\definecolor{darkblue}{rgb}{0.0,0,0.7} 
\definecolor{darkred}{rgb}{0.7,0,0} 
\definecolor{darkgreen}{rgb}{0, .6, 0} 
\newcommand{\definition}[1]{{\color{darkred}\emph{#1}}} 
\author[J.~Pan]{Jianping Pan}
\address[J. Pan]{Department of Mathematics, NC State University, Raleigh, NC 95616-8633, U.S.A.}
\email{jpan9@ncsu.edu}
\author[T.~Yu]{Tianyi Yu}
\address[T. Yu]{Department of Mathematics, UC San Diego, La Jolla, CA 92093, U.S.A.}
\email{tiy059@ucsd.edu}
\title{Top-degree components of Grothendieck and Lascoux polynomials}
\keywords{Grothendieck polynomials, Lascoux polynomials, Hilbert series, Castelnuovo–Mumford polynomials}
\subjclass[2020]{Primary 05E05}
\begin{document}

\maketitle
\begin{abstract}
The Castelnuovo–Mumford polynomial $\widehat{\mathfrak{G}}_w$ with $w \in S_n$
is the highest homogeneous component of the Grothendieck polynomial $\mathfrak{G}_w$.
Pechenik, Speyer and Weigandt define a statistic $\rajcode(\cdot)$ 
on $S_n$ that gives the leading monomial of $\widehat{\mathfrak{G}}_w$.
We introduce a statistic $\rajcode(\cdot)$ on any diagram $D$
through a combinatorial construction ``snow diagram''
that augments and decorates $D$.
When $D$ is the Rothe diagram of a permutation $w$, $\rajcode(D)$ 
agrees with the aforementioned $\rajcode(w)$.
When $D$ is the key diagram of a weak composition $\alpha$, $\rajcode(D)$ yields 
the leading monomial of $\widehat{\mathfrak{L}}_\alpha$, 
the highest homogeneous component of the Lascoux polynomials $\mathfrak{L}_\alpha$.
We use $\widehat{\mathfrak{L}}_\alpha$ to construct a basis of $\widehat{V}_n$, the span of 
$\widehat{\mathfrak{G}}_w$ with $w \in S_n$.
Then we show $\widehat{V}_n$ gives a natural algebraic 
interpretation of a classical $q$-analogue of Bell numbers.
\end{abstract}

\section{Introduction}
\label{S: Intro}

Introduced by Lascoux and Sch\"utzenberger \cite{LS:Groth},
the Grothendieck polynomial $\fGb_w$
is a polynomial representative of the 
$K$-class of structure sheaves of 
Schubert varieties of flag varieties.
It is the inhomogeneous analogue
of the Schubert polynomial $\fS_w$: 
The lowest-degree component
of $\fG_w$ forms $\fS_w$.
Pechenik, Speyer and Weigandt~\cite{PSW} introduce the
Castelnuovo–Mumford polynomial $\topGro_w$\footnote{Pechenik, Speyer and Weigandt~\cite{PSW} denote it as $\mathfrak{CM}_w$.}, 
the top-degree component of $\fG_w$.
They describe the leading monomial of $\topGro_w$
with respect to the tail lexicographic order
by defining a new statistic 
$\rajcode(\cdot)$ on $S_n$.
We summarize some of their results on $\topGro_w$.

\begin{ourthm}[~\cite{PSW}]
\label{thm:psw}
Let $w,u$ be permutations in $S_n$.
\begin{enumerate}
\item[(A)]
The polynomial $\topGro_w$ has leading monomial $x^{\rajcode(w)}$.
\item[(B)] 
We have
$\topGro_w$ is a scalar multiple of $\topGro_{u}$
if and only if $\rajcode(w) = \rajcode(u)$.
\item[(C)] 
If $w$ is inverse fireworks 
(see \S\ref{S: permutations}), 
then $x^{\rajcode(w)}$ has 
coefficient 1 in $\topGro_w$.
Moreover, there exists
exactly one $u'\in S_n$ that is inverse fireworks
such that $\rajcode(u) = \rajcode(u')$.
\end{enumerate}
\end{ourthm}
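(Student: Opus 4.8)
The plan is to run everything through a monomial-positive combinatorial model for $\fG_w$. I would use the pipe dream expansion $\fG_w=\sum_D\beta^{|D|-\ell(w)}x^{\wt(D)}$, the sum over pipe dreams $D$ whose reading word has Demazure product $w$, where $\deg x^{\wt(D)}$ equals the number of tiles $|D|$. Since within a fixed degree all coefficients carry the same sign, nothing cancels in top degree, so $\topGro_w$ is $\sum_{|D|=M}x^{\wt(D)}$ up to a nonzero scalar, where $M=M(w)$ is the maximal number of tiles of a pipe dream for $w$; in particular $\sum_i\rajcode(w)_i=M$. The first task is a greedy construction of a distinguished maximal pipe dream $D^\star(w)$ --- sweep the Rothe diagram of $w$ row by row, at each step placing tiles as far as the pipe-dream conditions permit --- and then to verify $\wt(D^\star(w))=\rajcode(w)$ by matching the sweep with the rule defining $\rajcode(\cdot)$ on $S_n$; as a sanity check, the dominant case should reproduce $\mathrm{code}(w)+\mathrm{code}(w)'$, where $\topGro_w$ is an explicit monomial.

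For part (A) I must upgrade ``$D^\star(w)$ is maximal'' to ``$x^{\rajcode(w)}$ is the tail-lexicographically largest of the monomials $x^{\wt(D)}$ with $|D|=M$''. The plan is induction on $n$ via the isobaric divided differences: when $\ell(ws_i)>\ell(w)$ one has $\fG_w=\pi_i\fG_{ws_i}$, and $\pi_i$ acts on the top component by the associated Demazure operator, which either fixes a tail-lex leading monomial or raises its $x_i$-exponent in a controlled, monotone way and produces nothing tail-lex larger of the same degree. Driving $w$ toward a dominant base case by a suitable choice of $s_i$ at each step, and transporting the leading monomial along, yields the statement; the work is the bookkeeping that tail-lex order is compatible with $\pi_i$ on each homogeneous top slice, together with the identification with $\rajcode$ from the greedy step.

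Parts (B) and (C) I would prove together, by reduction to inverse-fireworks permutations. The structural heart is the claim that each $w\in S_n$ has a canonically associated inverse-fireworks permutation $\widetilde w\in S_n$ --- obtained by the fireworks collapse of the maximal increasing runs of $w^{-1}$ --- with $\rajcode(\widetilde w)=\rajcode(w)$ and $\topGro_w$ a nonzero scalar multiple of $\topGro_{\widetilde w}$. Granting this, (C) follows once one checks: (i) an inverse-fireworks permutation of $S_n$ is uniquely recovered from its $\rajcode$ --- run the greedy construction backward from a code, produce a permutation, and verify it is inverse fireworks and that two inverse-fireworks permutations with equal $\rajcode$ coincide --- so $\widetilde w$ is the unique inverse-fireworks representative of its $\rajcode$-class; and (ii) for inverse-fireworks $w$, the pipe dream $D^\star(w)$ is the only size-$M$ pipe dream of weight $\rajcode(w)$, so $x^{\rajcode(w)}$ occurs with multiplicity one and hence coefficient $1$ in $\topGro_w$. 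Part (B) is then immediate: if $\rajcode(w)=\rajcode(u)$ then $\widetilde w=\widetilde u$ by (i), so $\topGro_w$ and $\topGro_u$ are both nonzero scalar multiples of $\topGro_{\widetilde w}$; conversely, proportional polynomials have the same leading monomial, which is $x^{\rajcode(\cdot)}$ by (A).

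The main obstacle is the proportionality of $\topGro_w$ and $\topGro_{\widetilde w}$. Unlike (A), this is not a statement about a single monomial --- two polynomials can have the same leading monomial and be otherwise unrelated --- so it demands genuine input. My plan is to factor the passage from $w$ to $\widetilde w$ into elementary fireworks moves and, for each move, exhibit a weight-preserving bijection between the size-$M$ pipe dreams before and after it (equivalently, a flat degeneration of the corresponding matrix Schubert variety that changes neither the Castelnuovo--Mumford regularity nor the leading coefficient of the Hilbert numerator), checking that the scalar accrued is nonzero. Isolating the correct local move, constructing its bijection, and showing it preserves maximality of the pipe-dream degree is, I expect, the crux; the remaining steps are then organized by the greedy model of the first paragraph.
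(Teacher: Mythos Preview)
This theorem is not proved in the paper. It is quoted from Pechenik--Speyer--Weigandt as background (note the attribution ``\cite{PSW}'' in the theorem header and the sentence ``We summarize some of their results'' preceding it); the paper's own contribution is the Lascoux analogue, Theorem~\ref{T: Top Las}. So there is no proof here to compare your proposal against.

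For what it is worth, the paper's proof of the Lascoux analogue shares the broad architecture of your plan: a monomial-positive combinatorial model ($K$-Kohnert diagrams in place of pipe dreams), an explicit extremal object realizing $x^{\rajcode(\alpha)}$ (built from the snow diagram, \S\ref{SS: Top Las: Exist}), a recursive argument through Demazure-type operators to control the leading monomial (\S\ref{SS: snowy}), and a reduction to a canonical representative in each $\rajcode$-class (snowy compositions playing the role of your inverse-fireworks $\widetilde w$) with proportionality proved inductively (\S\ref{SS: last major lemma}). If you want a template for executing your outline rigorously, those sections are the closest analogue.

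One technical slip in your sketch: the recursion you invoke, $\fG_w=\pi_i\fG_{ws_i}$, is not the Grothendieck recursion. In this paper's conventions $\fG_w=\partial_i\bigl((1+\beta x_{i+1})\fG_{ws_i}\bigr)$ when $w(i)<w(i+1)$, while $\pi_i$ governs key and Lascoux polynomials. This does not doom the strategy, but the actual computation of how the top-degree component transforms --- and hence the tail-lex bookkeeping you allude to --- must be done with the correct operator.
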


Dreyer, M\'esz\'aros and St. Dizier~\cite{dms22}
provide an alternative proof of (A) via the 
climbing chain model for Grothendieck polynomials
introduced by Lenart, Robinson, and Sottile~\cite{LRS}.
Hafner~\cite{Haf} provides an alternative proof of (A)
for vexillary permutations via bumpless \allowbreak pipedreams.

Schubert polynomials are related
to key polynomials $\kappa_\alpha$
which are indexed by weak compositions.
The key polynomials are the characters of 
Demazure modules~\cite{De}.
Both Schubert and key polynomials 
can be defined recursively 
via the divided difference operators 
(see \S\ref{S: Background}).
In addition, Schubert polynomials
expand positively into key polynomials~\cite{RS}.
The key polynomials also have
inhomogeneous analogues called Lascoux polynomials $\fLb_\alpha$~\cite{Las}.
Grothendieck polynomials and Lascoux polynomials are related: 
An expansion of Grothendieck polynomials into Lascoux polynomials 
was conjectured by Reiner and Yong~\cite{RY} and 
proven by Shimozono and Yu~\cite{SY}.

Due to the connection between $\fGb_w$ and $\fLb_\alpha$, one would expect the top Lascoux polynomial $\topLas_\alpha$, the top-degree component of $\fLb_\alpha$, to parallel $\topGro_w$. 
We define a statistic $\rajcode(\cdot)$ on weak compositions and show in \S\ref{S: weak composition} that $\topLas_\alpha$ enjoy properties analogous to the properties of $\topGro_w$ listed in Theorem~\ref{thm:psw}:

\begin{restatable}{ourthm}{topLasThm}
\label{T: Top Las}
Let $\alpha$ and $\gamma$ be two weak compositions. 
\begin{enumerate}
\item[(a)]
The polynomial $\topLas_\alpha$ has leading monomial $x^{\rajcode(\alpha)}$.
\item[(b)] We have
$\topLas_\alpha$ is a scalar multiple of $\topLas_{\gamma}$
if and only if $\rajcode(\alpha) = \rajcode(\gamma)$.
\item[(c)] 
We say $\alpha$ is snowy if its
positive entries are distinct.
If $\alpha$ is snowy, then $x^{\rajcode(\alpha)}$ has coefficient 1 in $\topLas_\alpha$.
Moreover, 
there exists exactly one snowy weak composition $\gamma'$ 
such that $\rajcode(\gamma) = \rajcode(\gamma')$.
\end{enumerate}  
\end{restatable}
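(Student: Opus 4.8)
The three parts are tightly linked, and (a) carries essentially all of the technical weight, so the plan is to prove it first and bootstrap (b) and (c) from it. For (a), I would start from a monomial-by-monomial combinatorial model for $\fL_\alpha$ — a set-valued analogue of Mason's semi-skyline (skyline key) fillings of the key diagram $D = D(\alpha)$ — writing $\fL_\alpha = \sum_F \beta^{\ex(F)} x^{\wt(F)}$ over such fillings $F$, so that the fillings with no extra entries recover $\kappa_\alpha$. Then $\topLas_\alpha$ is the generating function over those $F$ carrying the maximum possible number of entries, and the snow diagram is precisely the device that bookkeeps where those extra entries are forced to go: augmenting $D$ with the $\cloud$/$\snow$ cells records, column by column, the cells into which a maximal filling must push entries, and the decoration records multiplicities. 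Carrying this out, I expect to show that $|\rajcode(\alpha)|$ equals this maximum number of entries and that, among all maximal fillings, the one whose weight is largest in tail-lexicographic order (matching the convention of Theorem~\ref{thm:psw}) has weight exactly $x^{\rajcode(\alpha)}$, obtained by pushing entries to the smallest available row indices. As a cross-check — and an independent proof of (a) — one can reduce to Theorem~\ref{thm:psw}(A): when $\alpha$ is antidominant, $\fL_\alpha$ is a Grassmannian Grothendieck polynomial $\fG_w$ and $D(\alpha)$ is a shift of the Rothe diagram of $w$, so the snow diagrams coincide and $\rajcode(\alpha) = \rajcode(w)$; the general case then follows by inducting through the recursion $\fL_\alpha = \pib_i \fL_{s_i\alpha}$ for $\alpha_i < \alpha_{i+1}$, tracking how the top-degree component and the snow diagram transform under a single $\beta$-Demazure operator.

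For (b), the forward implication is immediate from (a): if $\topLas_\alpha = c\,\topLas_\gamma$ with $c\neq 0$, the two polynomials share a leading monomial, so $x^{\rajcode(\alpha)} = x^{\rajcode(\gamma)}$. For the converse, the plan is to show every $\topLas_\alpha$ is a scalar multiple of $\topLas_{\gamma'}$, where $\gamma'$ is the distinguished snowy weak composition attached to $\rajcode(\alpha)$ supplied by (c); proportionality then follows by transitivity. I would obtain this by exhibiting a short list of elementary moves on weak compositions — deleting or inserting a zero part, and ``merging'' a repeated positive value into the configuration forced by the snow diagram — each of which leaves the decorated snow diagram unchanged up to a relabeling of rows and hence multiplies $\topLas$ by a monomial; since leading monomials are preserved, each such monomial is forced to be $1$, giving a genuine scalar relation. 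The point requiring care here is verifying that these moves fix the decorated snow diagram, not merely $\rajcode(\cdot)$, and that $\topLas$ up to scalar is a function of that diagram alone.

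Part (c) splits into a coefficient computation and a counting statement. For the coefficient: when $\alpha$ is snowy, I would show there is a \emph{unique} maximal filling $F$ of $D(\alpha)$ with $\wt(F) = \rajcode(\alpha)$ — distinctness of the positive parts removes the only source of choice, namely the ability, when two positive parts agree, to route extra entries through either of two equal-length rows and thereby inflate the coefficient past $1$ — so $x^{\rajcode(\alpha)}$ occurs in $\topLas_\alpha$ with coefficient $1$; this is the Lascoux analogue of the inverse-fireworks case of Theorem~\ref{thm:psw}(C). For the counting statement, I would analyze the map $\alpha \mapsto \rajcode(\alpha)$ directly on snow diagrams: show it is injective on snowy weak compositions (one can reconstruct the positive parts and their positions from the snow diagram) and that its image contains $\rajcode(\gamma)$ for every weak composition $\gamma$ (build a snowy $\gamma'$ from the snow diagram of $\gamma$), yielding existence and uniqueness of $\gamma'$.

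The main obstacle I anticipate is part (a): pinning down that the tail-lex-leading weight among the maximal fillings is \emph{exactly} $x^{\rajcode(\alpha)}$ forces one to match the greedy ``snow'' recipe against the extremal fillings cell by cell, and it is the same extremal-filling uniqueness on which the coefficient-$1$ claim of (c) rests. A secondary difficulty is the converse of (b): checking that the reduction moves preserve the decorated snow diagram — not just the statistic $\rajcode(\cdot)$ — so that $\topLas$ changes only by a scalar, and in particular that the induction through $\pib_i$ interacts cleanly with passing to top degree.
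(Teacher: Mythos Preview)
Your overall architecture differs from the paper's in instructive ways. The paper does not work with set-valued skyline fillings; it uses the $K$-Kohnert diagram model (Theorem~\ref{thm: k.kohnert}) to exhibit a single diagram of weight $\rajcode(\alpha)$ and excess $\raj(\alpha)-|\alpha|$ (Lemma~\ref{L: Top Las 1}), then proves the leading-monomial claim \emph{only for snowy} $\alpha$ by induction through the recursion $\topLas_\alpha=\pi_i(x_{i+1}\topLas_{s_i\alpha})$ (Lemma~\ref{L: recursive top Las}, Lemma~\ref{L: snowy leading}), and finally deduces (a) for arbitrary $\alpha$ from the scalar-multiple statement (Lemma~\ref{L: Top Las similar}). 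Your ``cross-check'' route through the $\beta$-Demazure recursion is therefore close in spirit to what the paper actually does, although the paper never appeals to Theorem~\ref{thm:psw}(A) as a base case (the base case is $\alpha$ weakly decreasing, where $\fL_\alpha=x^\alpha$), and your assertion that for antidominant $\alpha$ ``the snow diagrams coincide'' with those of a Grassmannian Rothe diagram is not literally true (they are different diagrams with the same $\rajcode$).

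There is, however, a genuine gap in your plan for the converse of (b). Your proposed elementary moves are meant to show that passing from $\alpha$ to the snowy representative $\gamma'$ ``multiplies $\topLas$ by a monomial,'' which, together with (a), you argue forces that monomial to be $1$. But that would prove $\topLas_\alpha=\topLas_{\gamma'}$, and this is simply false: for instance $\alpha=(0,1,1)$ and $\gamma'=(0,0,1)$ have $\rajcode(\alpha)=\rajcode(\gamma')=(1,1,1)$, yet $\topLas_\alpha=2x_1x_2x_3$ while $\topLas_{\gamma'}=x_1x_2x_3$. The relation is only $\topLas_\alpha=c\,\topLas_{\gamma'}$ with $c\in\Z_{>0}$ possibly larger than $1$, so no ``monomial factor'' argument can work. (Relatedly, ``deleting or inserting a zero part'' does not preserve $\rajcode$: compare $(0,1)$ with $(1)$.) The paper handles this by a different mechanism: it shows (Lemma~\ref{L: s_i fix sim}) that whenever $\alpha\sim\gamma$ and a suitable cell is missing from $\snow(D(\alpha))$, both satisfy the \emph{same} recursion $\topLas_\star=\pi_i(x_{i+1}\topLas_{s_i\star})$ with $s_i\alpha\sim s_i\gamma$, and then inducts on $\raj(\alpha)$; the scalar $c$ is produced at the base of this induction and is carried forward unchanged by the operator, so it is never forced to be $1$. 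Your coefficient-$1$ argument for (c) via ``unique maximal filling'' is correspondingly fragile: in the paper the coefficient~$1$ in the snowy case is tracked through the operator analysis of Lemma~\ref{L: monomials from operator}, not read off from a filling count.
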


Our definition of $\rajcode(\cdot)$
on weak compositions is diagrammatic. 
Given a diagram $D$, 
we define a combinatorial
construction called the snow diagram 
that augments and decorates $D$.
Let $\rajcode(D)$
be the weight of the snow diagram.
Every weak composition $\alpha$ is naturally
associated with a diagram called
the key diagram $D(\alpha)$
(see Subsection~\ref{subsec.diagrams}).
Then we define $\rajcode(\alpha):=\rajcode(D(\alpha))$.

Snow diagrams unify the computation of leading monomials in $\topGro_w$ and $\topLas_\alpha$.
Each permutation $w$ is also associated
with a diagram called the Rothe diagram
$RD(w)$.
In \S\ref{S: permutations},
we show $\rajcode(w) = \rajcode(RD(w))$.
In other words, we give a diagrammatic way to compute $\rajcode(w)$.

Finally, let $\widehat{V}_n :=  \Q\textrm{-span}\{\topGro_w: w \in S_n\}$ and $\widehat{V} :=  
\bigcup_{n \geq 1} \widehat{V}_n$.
In Proposition~\ref{P: Filtered Algebra},
we show $\widehat{V}$
is a filtered algebra. 
Theorem~\ref{thm:psw} can be used to construct a basis of $\widehat{V}_n$
and $\widehat{V}$ 
consisting of $\topGro_w$.
In particular, 
the dimension of $\widehat{V}_n$
is $B_n$, the $n\textsuperscript{th}$ Bell number.
In \S\ref{S: Space},
we use Theorem~\ref{T: Top Las} to construct another basis consisting of $\topLas_\alpha$.
This basis allows us to compute the Hilbert series of $\widehat{V}_n$ 
and $\widehat{V}$
involving a $q$-analogue of $B_n$.

The rest of the paper is organized as follows. 
In \S\ref{S: Background}, we provide necessary background information and notation.
In \S\ref{S: Cloud and snow}, we construct a snow diagram from any diagram 
and define statistics $\rajcode(\cdot)$ and $ \raj(\cdot)$ on all diagrams.
In \S\ref{S: weak composition}, we  prove Theorem~\ref{T: Top Las}.
In \S\ref{S: permutations}, we show the statistics $\rajcode(\cdot)$ and $\raj(\cdot)$ 
on a Rothe diagram are equivalent to that defined in~\cite{PSW}.
We also relate the snow diagram to two classical constructions: 
Schensted insertion and the shadow diagram.
In \S\ref{S: Space}, we derive the Hilbert series 
of $\widehat{V}_n$ and $\widehat{V}$.
In \S\ref{sec.open}, we present several open problems and future directions.

\section{Background}
\label{S: Background}
\subsection{Polynomials}
We provide necessary background
for Grothendieck polynomials and Lascoux polynomials.
Then we introduce
$\topGro_w$ and $\topLas_\alpha$ 
which span the spaces $\widehat{V}_n$ and $\widehat{V}$. 

The \definition{Grothendieck polynomials} $\fGb_w \in 
\mathbb{Z}_{\geqslant 0}[x_1, x_2, \dots][\beta]$
were recursively defined by Lascoux and Sch{\"u}tzenberger~\cite{LS:Groth}. 
Let $\partial_i(\cdot)$ be the divided difference operators acting 
on the polynomial ring.
For each $i$,
define $\partial_i(f) := \dfrac{f - s_i f}{x_i-x_{i+1}}$,
where $s_i$ is the operator that swaps $x_i$ and $x_{i+1}$.
Then for $w\in S_n$, 
\begin{align*}
\fGb_w &:=
\begin{cases}
x_1^{n-1}x_2^{n-2}\cdots x_{n-1} &\text{if $w$ is $[n, n-1, \dots, 1]$ in one-line notation,} \\
\partial_i( (1 + \beta x_{i+1}) \fGb_{ws_i} ) &\text{if $w(i)<w(i+1)$.}
\end{cases}
\end{align*}

Let $S_+$ be the set of permutations
of $\{1, 2, \dots\}$
such that only finitely many numbers are permuted.
Take $w \in S_+$ and assume $w$ only permutes numbers in $[n]$.
Let $w' \in S_n$ be the restriction of $w$ to $[n]$
and define $\fGb_w$ as $\fGb_{w'}$.
It is shown in~\cite{LS:Groth} that $\fGb_w$ is well-defined.

A \definition{weak composition}
is an infinite sequence of non-negative integers with finitely many positive entries. 
Let $C_+$ be the set of weak compositions. 
For $\alpha \in C_+$, 
we use $\alpha_i$ to denote its $i$\textsuperscript{th} entry,
and write $\alpha = (\alpha_1, \alpha_2, \dots, \alpha_n)$ 
where $\alpha_n$ is the last positive entry. 
We use $x^\alpha$ to denote the monomial
$x_1^{\alpha_1}x_2^{\alpha_2}\cdots x_n^{\alpha_n}$ and
$|\alpha| = \sum_{i\geqslant 1}^n\alpha_i$.
The \definition{Lascoux polynomials} 
$\fLb_\alpha$, indexed by weak compositions,
are in $\mathbb{Z}_{\geqslant 0}[x_1, x_2, \dots][\beta]$. 
By~\cite{Las}, they are defined recursively
\begin{align*}
\fLb_\alpha = \begin{cases}
x^\alpha & \text{if $\alpha$ is weakly decreasing,} \\
\pi_i ((1 + \beta x_{i+1}) \fLb_{s_i\alpha} ) &\text{if $\alpha_i<\alpha_{i+1}$,}
\end{cases}
\end{align*}
where $\pi_i$ is the operator $\pi_i(f) := \partial_i(x_i f)$.

We say a pair $(i,j)$ is an \definition{inversion}
of $w \in S_n$ if $i < j$ and $w(i) > w(j)$. 
Let $\Inv(w)$ be the set of all inversions in $w$
and let $\inv(w) = |\Inv(w)|$.
Then we may view $\fGb_w$ as a polynomial in $\beta$, where
\[
[\beta^d] \fGb_w := \textrm{coefficient of $\beta^d$ in }\fGb_w
\]
is a homogeneous polynomial in the $x$-variables with degree 
$\inv(w) + d$ in $\Z_{\geq 0}[x_1,x_2,\dots]$.
The \definition{Schubert polynomial} $\fS_w := [\beta^0]\fGb_w$.
Similarly, viewing $\fLb_\alpha$ as a polynomial of $\beta$,
$[\beta^d] \fLb_\alpha$ is a homogeneous polynomial with degree
$|\alpha| + d$ in $\Z_{\geq 0}[x_1,x_2,\dots]$.
The \definition{key polynomial} $\kappa_\alpha := [\beta^0]\fLb_\alpha$.
The representation theoretic, geometric and combinatorial perspectives of Schubert polynomials and key polynomials
are well-studied~\cite{De,LS1,BJS}.

Define $V_n :=  \Q\textrm{-span}\{\fS_w: w \in S_n\}$ 
and $V :=  \Q\textrm{-span}\{\fS_w: w \in S_+\} = \bigcup_{n \geq 1} V_n$.
In fact, $V = \mathbb{Q}[x_1, x_2, \dots]$. 
By the increasing sequence 
$V_1 \subset V_2 \subset \cdots \subset V$,
$V$ has the structure of a filtered algebra.

In this paper, we are interested in the top-degree 
components of $\fGb_w$ and $\fLb_\alpha$. 
For a polynomial $f \in \mathbb{Q}[x_1, x_2, \dots][\beta]$,
let $\widehat{f} = [\beta^d](f)$ where $d$ is the largest such that 
$[\beta^d](f) \neq 0$.
The \definition{Castelnuovo–Mumford polynomial} of $w \in S_+$
is defined as $\topGro_w$.
The \definition{top Lascoux polynomial} of $\alpha \in C_+$
is defined as $\topLas_\alpha$. In appendix \S\ref{appendix}, we list some Grothendieck polynomials and Lascoux polynomials.
Pechenik, Speyer and Weigandt~\cite{PSW} first study $\topGro_w$.
To the best of the authors knowledge, 
$\topLas_\alpha$ has not been studied previously. 

Now consider the \definition{tail lexicographic order} 
on monomials in the $x$-variables.
We say a monomial $x^\alpha$ is larger than $x^\gamma$
if there exists $k$ such that $\alpha_k > \gamma_k$ and 
$\alpha_j = \gamma_j$ for all $j > k$.
The \definition{leading monomial} of $f \in \Q[x_1, x_2, \dots]$
is the largest monomial in $f$. 
Among the four homogeneous
polynomials above,
three of them have combinatorial rules for their leading terms:
\begin{enumerate}
\item~\cite{BJS} The leading monomial of $\fS_w$ with $w \in S_n$
is $x^{\invcode(w)}$,
where $$\invcode(w)_i = |\{j: (i,j) \in \Inv(w)\}|.$$
\item~\cite{LS2} The leading monomial of $\kappa_\alpha$ is $x^\alpha$.
\item~\cite{PSW} The leading monomial of $\topGro_w$
is $x^{\rajcode(w)}$ defined as follows.
\end{enumerate}

\begin{defn}\cite{PSW}
\label{D: raj on permutations}
Let $\textup{LIS}^w(q)$ be the 
length of the longest increasing subsequence 
of $w\in S_n$ that starts with $q$.
The $\rajcode(w)$ for $w\in S_n$ is a weak composition
where 
$\rajcode(w)_r := n + 1 - r - \textup{LIS}^w(w(r))$ for $r \in [n]$
and $0$ if $r > n$.
Then $\raj(w) := |\rajcode(w)|$.
\end{defn}

\begin{exa}
\label{E: rajcode permutation}
Consider $w = 3721564 \in S_7$. We have $\textup{LIS}^w(2) = 3$, 
so $\rajcode(w)_3 = 7 + 1 - 3 - 3 = 2$.
All together,
we get $\rajcode(w) = (4,5,2,1,1,1)$ and $\raj(w) = 14$.
\end{exa}

We will define $\rajcode(\cdot)$ on $C_+$
and show the leading monomial of $\fLb_\alpha$ is $x^{\rajcode(\alpha)}$
in \S\ref{S: weak composition}.

A connection between $\fGb_w$ and $\fLb_\alpha$
is established by Shimozono and Yu~\cite{SY}.
To describe this connection, 
we need the following notion.
\begin{defn}
Let $f, f_1, f_2, \dots$ be polynomials in 
$\mathbb{Z}_{\geq 0}[x_1, x_2,\dots]$.
We say $f$ \definition{expands positively} 
into $\{f_1,f_2, \dots\}$ if there exist 
$c_1,c_2, \dots \in \mathbb{Z}_{\geq 0}$
such that $f = \sum_{i} c_if_i$.

Now assume $f, f_1, f_2, \dots$ are polynomials 
in $\mathbb{Z}_{ \geq 0}[\beta][x_1, x_2,\dots]$.
We say $f$ \definition{expands positively} 
into $\{f_1,f_2, \dots\}$ if there exist 
$g_1,g_2, \dots \in \mathbb{Z}_{\geq 0}[\beta]$ 
such that $f = \sum_{i} g_if_i$.
\end{defn}

\begin{ourthm}[\cite{SY}]
\label{T: Gro to Las}
For $w \in S_+$, $\fGb_w$ expands positively
into $\{\fLb_\alpha: \alpha \in C_+\}$. 
\end{ourthm}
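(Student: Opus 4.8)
The plan is to prove the statement by a "transition"-style argument, inducting on the permutation $w$ with respect to the recursion defining Grothendieck polynomials. Recall that $\fGb_{w_0} = x_1^{n-1}\cdots x_{n-1}$ for the longest element $w_0 = [n,n-1,\dots,1]$, and $\fGb_w = \partial_i((1+\beta x_{i+1})\fGb_{ws_i})$ whenever $w(i) < w(i+1)$. Since every $w \in S_n$ can be reached from $w_0$ by a sequence of such moves, it suffices to show: (i) the base case $\fGb_{w_0}$ expands positively into Lascoux polynomials, and (ii) if $f$ expands positively into $\{\fLb_\alpha\}$, then so does $\pi_i^{(\beta)}(f) := \partial_i((1+\beta x_{i+1}) f)$. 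For (i), $\fGb_{w_0} = x^{(n-1,n-2,\dots,1,0,\dots)}$ is a single dominant monomial, hence equals $\fLb_{(n-1,n-2,\dots,1,0,\dots)}$ since Lascoux polynomials of weakly decreasing compositions are monomials.

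The heart of the matter is step (ii): I would need a combinatorial/operator identity expressing $\partial_i((1+\beta x_{i+1})\fLb_\alpha)$ as a $\mathbb{Z}_{\geq 0}[\beta]$-linear combination of Lascoux polynomials. This is essentially a "Leibniz / exchange" relation for the isobaric-type operators underlying $\fLb_\alpha$. When $\alpha_i \geq \alpha_{i+1}$ the operator $\partial_i((1+\beta x_{i+1})(\cdot))$ applied to $\fLb_\alpha$ should either return a nonnegative combination directly or vanish/telescope; when $\alpha_i < \alpha_{i+1}$ one can use the defining recursion $\fLb_\alpha = \pi_i((1+\beta x_{i+1})\fLb_{s_i\alpha})$ to rewrite things. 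The key algebraic input is the relation between $\partial_i$ and $\pi_i^{(\beta)}$: one checks $\partial_i \circ ((1+\beta x_{i+1})(\cdot)) = \pi_i^{(\beta)} + (\text{lower order }\beta\text{-terms})$ on the relevant subspace, and that iterated applications stay within the $\mathbb{Z}_{\geq 0}[\beta]$-span of $\{\fLb_\alpha\}$. Concretely, I expect to prove a lemma of the form: for any $\alpha$, $\partial_i((1+\beta x_{i+1})\fLb_\alpha) = \sum_\gamma c_\gamma^{(\beta)} \fLb_\gamma$ with $c_\gamma^{(\beta)} \in \mathbb{Z}_{\geq 0}[\beta]$, where the $\gamma$ range over compositions obtained from $\alpha$ by an explicit combinatorial rule (moving a box from row $i+1$ to row $i$, with $\beta$-corrections).

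An alternative, cleaner route avoids re-deriving the operator identity: invoke known structural results. Lascoux polynomials form a $\mathbb{Z}[\beta]$-basis of $\mathbb{Z}[\beta][x_1,x_2,\dots]$, so $\fGb_w$ certainly expands into them with coefficients in $\mathbb{Z}[\beta]$; the content of the theorem is the \emph{positivity} of those coefficients. For positivity I would lean on a crystal-theoretic or tableau model: Grothendieck polynomials have a combinatorial expansion in terms of set-valued tableaux (Buch), and Lascoux polynomials have one in terms of set-valued skyline fillings or Kohnert-type diagrams. The goal is then to exhibit a weight- and $\beta$-degree-preserving decomposition of the set-valued tableau model for $\fGb_w$ into "Demazure crystal" pieces, each of which is the model for a single $\fLb_\alpha$. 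This is exactly the strategy that proves the Schubert-into-key expansion of Reiner–Shimozono lifted to the $K$-theoretic setting.

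The main obstacle is establishing this crystal/tableau decomposition cleanly: one must show that the relevant set-valued object (set-valued pipe dreams or the $K$-theoretic analogue of Reiner–Shimozono's "standardization") decomposes the underlying module/crystal into a disjoint union of $K$-Demazure crystals, and that no cancellation occurs when collecting $\beta$-powers. Handling the set-valued (i.e. $\beta > 0$) part is strictly harder than the classical $\beta = 0$ case because boxes can carry multiple entries, so the "last letter" or "lifting the highest weight" arguments that identify Demazure pieces must be adapted to track the extra $\beta$-degree, and one must verify the combinatorial bijections respect it. Once that decomposition is in hand, reading off the nonnegative coefficients $g_i \in \mathbb{Z}_{\geq 0}[\beta]$ is immediate, and the theorem follows. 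I would present the operator-recursion approach as the primary proof and remark on the crystal interpretation, since the former is self-contained given the recursions already stated in the excerpt.
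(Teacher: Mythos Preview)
The paper does not itself prove Theorem~\ref{T: Gro to Las}; it is quoted from Shimozono--Yu~\cite{SY}, and the paper later remarks (\S\ref{sec.open}) that the proof in~\cite{SY} ``involves finding certain tableaux and computing their right keys.'' So there is no in-paper argument to compare against directly; your alternative route (a tableau/crystal decomposition of a set-valued model into $K$-Demazure pieces) is in fact the approach taken in the cited reference, and is the correct direction.

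Your primary route, however, has a genuine gap. The lemma you ``expect to prove,'' namely that
\[
\partial_i\bigl((1+\beta x_{i+1})\,\fLb_\alpha\bigr)\;=\;\sum_\gamma c_\gamma^{(\beta)}\,\fLb_\gamma
\qquad\text{with } c_\gamma^{(\beta)}\in\mathbb{Z}_{\geq 0}[\beta],
\]
is false. Take $\alpha=(0,1)$, so $\fLb_{(0,1)}=x_1+x_2+\beta x_1x_2$. A direct computation gives
\[
\partial_1\bigl((1+\beta x_2)\,\fLb_{(0,1)}\bigr)\;=\;-\beta(x_1+x_2)-\beta^2 x_1x_2\;=\;-\beta\,\fLb_{(0,1)},
\]
whose coefficient $-\beta$ is not in $\mathbb{Z}_{\geq 0}[\beta]$. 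The underlying obstruction is structural: the Grothendieck recursion uses the operator $f\mapsto \partial_i((1+\beta x_{i+1})f)$, while the Lascoux recursion uses the \emph{different} operator $f\mapsto \pi_i((1+\beta x_{i+1})f)=\partial_i(x_i(1+\beta x_{i+1})f)$. These two $K$-theoretic Demazure-type operators do not preserve each other's positive cones, so an individual Lascoux summand in a positive expansion of $\fGb_{ws_i}$ can acquire a negative coefficient after applying the Grothendieck operator. Any cancellation that restores positivity for $\fGb_w$ would have to be proved globally, not term-by-term, which brings you back to needing a combinatorial model rather than a bare operator induction. Drop the operator approach and develop the tableau/right-key argument you sketched second.
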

 
This result implies
$\topGro_w$ also expands
positively into
$\topLas_\alpha$ by the following lemma whose proof
is sufficiently elementary. 

\begin{lemma}
\label{L: Expands positively}
Let $f, f_1, f_2, \dots$ 
in $\mathbb{Z}_{\geq 0}[\beta][x_1, x_2,\dots]$.
If $f$ expands positively 
into $\{f_1, f_2, \dots\}$,
then $\widehat{f}$ expands positively into $\widehat{f_1}, \widehat{f_2}, \dots $.
\end{lemma}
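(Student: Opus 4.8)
The plan is to chase top $\beta$-degrees through the relation $f = \sum_i g_i f_i$ (with $g_i \in \mathbb{Z}_{\geq 0}[\beta]$) and exploit the fact that everything has nonnegative coefficients to forbid cancellation. I would first dispose of the trivial case $f = 0$, where $\widehat f = 0$ and all coefficients may be taken to be $0$; so assume $f \neq 0$, and after deleting the summands with $g_i = 0$, assume every $g_i$ and every $f_i$ is nonzero.

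Next, for each $i$ set $d_i := \deg_\beta f_i$ and $e_i := \deg_\beta g_i$, so that $\widehat{f_i} = [\beta^{d_i}]f_i$ and the leading $\beta$-coefficient $c_i$ of $g_i$ is a \emph{positive} integer. A one-line computation with the product $g_i f_i$ then shows $\deg_\beta(g_i f_i) = d_i + e_i$ and $[\beta^{d_i+e_i}](g_i f_i) = c_i \widehat{f_i}$: the point is that the top $\beta$-coefficient of the product is $c_i\widehat{f_i}$, and this is nonzero precisely because $c_i > 0$ and $\widehat{f_i} \neq 0$.

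Now let $D := \max_i(d_i + e_i)$. Because the summands $g_i f_i$ all lie in $\mathbb{Z}_{\geq 0}[\beta][x_1,x_2,\dots]$, adding them causes no cancellation, so
\[
[\beta^D] f \;=\; \sum_{i:\, d_i + e_i = D} c_i\, \widehat{f_i},
\]
which is a nonempty sum of nonzero polynomials with nonnegative coefficients, hence nonzero, while $[\beta^m]f = 0$ for every $m > D$. Therefore $\widehat{f} = [\beta^D]f = \sum_{i:\,d_i+e_i=D} c_i\widehat{f_i}$; assigning coefficient $c_i$ to those $i$ with $d_i + e_i = D$ and coefficient $0$ to all others exhibits $\widehat f$ as a positive expansion into $\widehat{f_1}, \widehat{f_2}, \dots$.

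The argument has no real obstacle; the only subtlety — and the reason the hypothesis is stated over $\mathbb{Z}_{\geq 0}[\beta][x_1,x_2,\dots]$ rather than over $\mathbb{Z}[\beta][x_1,x_2,\dots]$ — is that positivity must be invoked twice: once to ensure that multiplying by $g_i$ does not drop the $\beta$-degree, and once to ensure that summing the $g_i f_i$ does not annihilate the top $\beta$-degree component.
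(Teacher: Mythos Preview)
Your proof is correct. The paper does not actually prove this lemma; it simply remarks that the proof ``is sufficiently elementary'' and omits it, so there is nothing to compare against beyond noting that your argument is exactly the routine top-$\beta$-degree chase the authors had in mind. One microscopic tidy-up: when you write ``after deleting the summands with $g_i = 0$, assume every $g_i$ and every $f_i$ is nonzero,'' you should also explicitly discard any summand with $f_i = 0$ (these contribute $0$ regardless of $g_i$), but this is cosmetic and does not affect the argument.
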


\begin{cor}
\label{C: top Gro to top Las}
For $w \in S_+$,
$\topGro_w$ expands positively into $\{\topLas_\alpha: \alpha \in C_+\}$.
\end{cor}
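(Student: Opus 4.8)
The plan is to combine Theorem~\ref{T: Gro to Las} with Lemma~\ref{L: Expands positively} in the only way they fit together. By Theorem~\ref{T: Gro to Las}, for $w \in S_+$ we may write $\fGb_w = \sum_\alpha g_\alpha \fLb_\alpha$ for some $g_\alpha \in \mathbb{Z}_{\geq 0}[\beta]$, with only finitely many $g_\alpha$ nonzero. This is an identity in $\mathbb{Z}_{\geq 0}[\beta][x_1,x_2,\dots]$, which is precisely the hypothesis of Lemma~\ref{L: Expands positively} with $f = \fGb_w$ and $\{f_i\}$ the family $\{\fLb_\alpha : \alpha \in C_+\}$. Applying that lemma directly yields that $\widehat{\fGb_w} = \topGro_w$ expands positively into $\{\widehat{\fLb_\alpha} : \alpha \in C_+\} = \{\topLas_\alpha : \alpha \in C_+\}$, which is the claim.

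There is essentially no obstacle here: the corollary is exactly the instantiation of the abstract lemma on the specific positive expansion supplied by~\cite{SY}. The only point that warrants a word is that $\topGro_w$ and $\topLas_\alpha$ were \emph{defined} as $\widehat{\fGb_w}$ and $\widehat{\fLb_\alpha}$ — i.e. the top-$\beta$-degree components — so the conclusion of Lemma~\ref{L: Expands positively}, phrased in terms of $\widehat{(\cdot)}$, transports verbatim into the notation of the corollary. One should also note that the finiteness of the expansion (only finitely many $\alpha$ contribute, since $\fGb_w$ is a polynomial) ensures there is no convergence issue in taking top-degree components term by term, though Lemma~\ref{L: Expands positively} already accommodates this.

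If one wishes to make the argument self-contained rather than citing the lemma, the key observation to record is the following. Write $d = \deg_\beta \fGb_w$. For each $\alpha$ with $g_\alpha \neq 0$, the top $\beta$-degree of $g_\alpha \fLb_\alpha$ is $\deg_\beta g_\alpha + \deg_\beta \fLb_\alpha$, and because all coefficients in sight lie in $\mathbb{Z}_{\geq 0}$ there can be no cancellation among the top-degree parts. Hence extracting $[\beta^d]$ from both sides of $\fGb_w = \sum_\alpha g_\alpha \fLb_\alpha$ gives
\[
\topGro_w \;=\; \sum_{\alpha} \bigl([\beta^{\,d - \deg_\beta \fLb_\alpha}]\, g_\alpha\bigr)\, \topLas_\alpha,
\]
where each bracketed scalar is a non-negative integer (and is zero unless $\deg_\beta g_\alpha + \deg_\beta \fLb_\alpha = d$). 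This exhibits the required non-negative expansion explicitly and completes the proof.
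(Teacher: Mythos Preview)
Your proof is correct and follows exactly the paper's approach: the corollary is presented there as the immediate application of Lemma~\ref{L: Expands positively} to the positive expansion of Theorem~\ref{T: Gro to Las}, with no additional argument. Your optional self-contained paragraph is a fine unpacking of the (unproved) lemma, but nothing beyond the one-line combination is needed.
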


Define $\widehat{V}_n :=  \Q\textrm{-span}\{\topGro_w: w \in S_n\}$ and $\widehat{V} :=  \Q\textrm{-span}\{\topGro_w: w \in S_+\} = \bigcup_{n \geq 1} \widehat{V}_n$.
By work of Lascoux, Sch{\"u}tzenberger~\cite{Las2} and Brion~\cite{Bri},
the product $\fGb_u \fGb_v$ with $u \in S_m$ and $v \in S_n$
expands positively into $\fGb_w$ with $w \in S_{m + n}$.
By Lemma~\ref{L: Expands positively},
$\topGro_u \topGro_v$ with $u \in S_m$ and $v \in S_n$ expands positively into $\topGro_w$ 
with $w \in S_{m + n}$.
Finally, we conclude the following.
\begin{prop}
\label{P: Filtered Algebra}
The space $\widehat{V}$ is a filtered 
algebra with respect to the filtration
$\widehat{V}_1 \subset \widehat{V}_2 \subset \cdots \subset \widehat{V}$.
\end{prop}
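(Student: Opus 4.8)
The plan is to verify, in turn, the three defining properties of a filtered algebra: that $\widehat{V}_1 \subseteq \widehat{V}_2 \subseteq \cdots$ is an increasing chain of subspaces with union $\widehat{V}$, that $\widehat{V}$ is a (unital) subalgebra of $\Q[x_1,x_2,\dots]$, and that the filtration is multiplicative in the sense $\widehat{V}_m \cdot \widehat{V}_n \subseteq \widehat{V}_{m+n}$.

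First I would check that the chain is genuinely increasing. Any $w \in S_n$ may be regarded as the element of $S_{n+1}$ fixing $n+1$, and by the well-definedness of $\fGb_w$ for $w \in S_+$ recalled in \S\ref{S: Background}, the polynomial $\fGb_w$ — hence its top-degree component $\topGro_w$ — does not depend on whether $w$ is read in $S_n$ or in $S_{n+1}$. Thus every spanning element of $\widehat{V}_n$ is a spanning element of $\widehat{V}_{n+1}$, giving $\widehat{V}_n \subseteq \widehat{V}_{n+1}$; taking the union recovers $\widehat{V}$, and in particular each $\widehat{V}_n \subseteq \widehat{V}$.

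Next I would record that $\widehat{V}$ is a unital algebra. Since $\fGb_{\id} = 1$, we have $1 = \topGro_{\id} \in \widehat{V}_1 \subseteq \widehat{V}$. For closure under multiplication and multiplicativity of the filtration I would invoke the statement established just before Proposition~\ref{P: Filtered Algebra}: for $u \in S_m$ and $v \in S_n$, the product $\topGro_u\topGro_v$ expands positively — in particular, $\Q$-linearly — into $\{\topGro_w : w \in S_{m+n}\}$, so $\topGro_u\topGro_v \in \widehat{V}_{m+n}$. As such products span the image of $\widehat{V}_m \times \widehat{V}_n$ under multiplication, bilinearity yields $\widehat{V}_m \cdot \widehat{V}_n \subseteq \widehat{V}_{m+n}$; specializing, $\widehat{V}\cdot\widehat{V}\subseteq\widehat{V}$, so $\widehat{V}$ is an algebra. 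Together with the increasing chain of the previous paragraph, this is precisely the claim that $\widehat{V}$ is a filtered algebra with respect to $\widehat{V}_1 \subset \widehat{V}_2 \subset \cdots \subset \widehat{V}$.

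There is no real obstacle: the proposition is an assembly of facts already in hand, the substantive input being the Lascoux–Sch\"utzenberger–Brion positivity of $\fGb_u\fGb_v$ in the Grothendieck basis pushed through Lemma~\ref{L: Expands positively}. The only points needing a moment's care are the stability of $\topGro_w$ under the embedding $S_n \hookrightarrow S_{n+1}$ — needed so that the spans are nested, not merely a family of subspaces of $\widehat{V}$ — and the bookkeeping that the degree indices add correctly under multiplication.
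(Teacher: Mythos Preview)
Your proposal is correct and follows essentially the same approach as the paper: the key input is the Lascoux--Sch\"utzenberger--Brion positivity of $\fGb_u\fGb_v$ combined with Lemma~\ref{L: Expands positively}, exactly as in the text preceding the proposition. You simply spell out in more detail the routine verifications (nesting of the $\widehat{V}_n$, the unit, bilinearity) that the paper leaves implicit.
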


\subsection{Diagrams}
\label{subsec.diagrams}
A \definition{diagram} is a finite subset 
of $\mathbb{Z}_{>0} \times \mathbb{Z}_{>0}$.
We represent a diagram by putting a cell at row $r$
and column $c$ for each $(r,c)$ in the diagram.
The leftmost column (resp. topmost row) is called column $1$ (resp. row $1$).
The \definition{weight} of a diagram $D$, 
denoted as $\wt(D)$, 
is a weak composition 
whose $i$\textsuperscript{th}  entry is the number of boxes 
in its row $i$.
We recall two classical families of diagrams. 

Each weak composition $\alpha$
is associated with a diagram called 
the \definition{key diagram}, 
denoted as $D(\alpha)$.
It is the unique left-justified diagram 
with weight $\alpha$.
One important key diagram we will use later is 
$\Stair_n := D((n-1, n-2, \cdots, 1))$.

\begin{exa}
The following are two examples of key diagrams.
For clarity, we put an ``$i$'' 
on the left of row $i$
and put a small dot in each cell.
\begin{align*}
D(0,2,1) = 
\raisebox{0.45cm}{
\begin{ytableau}
\none[1]\cr
\none[2] & \cdot & \cdot \cr
\none[3] & \cdot
\end{ytableau}}
\quad , \quad \quad \quad
\Stair_4 = 
\raisebox{0.45cm}{
\begin{ytableau}
\none[1] & \cdot & \cdot & \cdot \cr
\none[2] & \cdot & \cdot \cr
\none[3] & \cdot
\end{ytableau}} 
\end{align*}
\end{exa}

Each permutation $w$ 
is associated with
the \definition{Rothe diagram} 
$RD(w):=\{ (r, w(r'): (r,r') \in \Inv(w)\}$.

\begin{exa}
Let $w = 41532 \in S_5$.
Then $\Inv(w) = \{ (1,2), (1,4), (1,5), 
(3,4), (3,5), (4,5)\}$.
The Rothe diagram is depicted as follows.
$$
RD(w) = 
\raisebox{1cm}{
\begin{ytableau}
\none[1] & \cdot & \cdot & \cdot \cr
\none[2] & \none & \none & \none \cr
\none[3] & \none & \cdot & \cdot \cr
\none[4] & \none & \cdot & \none \cr
\none[5] & \none & \none & \none 
\end{ytableau}}
$$
\end{exa}

\subsection{\texorpdfstring{$K$}{TEXT}-Kohnert diagrams}
We recall a combinatorial formula for Lascoux polynomials. 
To simplify our description, 
we introduce the following definition.
\begin{defn}
A \definition{labeled diagram} 
is a diagram where each cell can be labeled 
by a symbol.
The \definition{underlying diagram} of a labeled diagram
is the diagram obtained by ignoring all labels. 
The weight of a labeled diagram $D$, denoted as $\wt(D)$,
is just the weight of its underlying diagram.
\end{defn}

Then a \definition{ghost diagram} is a labeled diagram 
where cells can be labeled by $\X$.
We call cells labeled by $\X$ as ``\definition{ghosts}''. 
For a ghost diagram $D$, its \definition{excess}, denoted as $\ex(D)$,
is the number of ghosts in $D$.
Next, we define a move on ghost diagrams.

\begin{defn}[\cite{RossY}]
A \definition{$K$-Kohnert move} is defined on a ghost diagram $D$. 

We pick a cell $(r,c)$ and move it up, 
subject to the following requirements.
\begin{itemize}
\item The cell $(r,c)$ must be the rightmost cell in row $r$.
\item The cell $(r,c)$ is not a ghost.
\item The cell $(r,c)$ is moved to the lowest empty spot above it.
\item The cell $(r,c)$ may jump over other cells but cannot jump over any ghosts.
\end{itemize}
After the move, 
we may or may not leave a ghost at $(r,c)$. 
When we leave a ghost, 
we refer this move as a \definition{ghost move}.
\end{defn}

For a weak composition $\alpha$,
a ghost diagram is called a \definition{$K$-Kohnert diagram}
of $\alpha$ if it can be obtained from $D(\alpha)$
by $K$-Kohnert moves. 
Let $\KKD(\alpha)$ be the set of all $K$-Kohnert diagrams
of $\alpha$.
As proved in~\cite{PY}, 
$K$-Kohnert diagrams give a formula for Lascoux polynomials. 
This rule was first conjectured by Ross and Yong~\cite{RossY}.
Notice that our convention is different from~\cite{PY}:
row $1$ is the top most row in this paper while it is the 
bottom most row in~\cite{PY}. 

\begin{ourthm}[\cite{PY}]\label{thm: k.kohnert}
Let $\alpha$ be a weak composition.
Then we have
$$
\fLb_\alpha = \sum_{D \in \KKD(\alpha)}x^{\wt(D)} \beta^{\ex(D)}.
$$
\end{ourthm}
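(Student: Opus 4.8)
The plan is to show that the right-hand side $\widetilde{\fLb}_\alpha := \sum_{D \in \KKD(\alpha)} x^{\wt(D)}\beta^{\ex(D)}$ obeys the same defining recursion as $\fLb_\alpha$, and then conclude by induction on $\mathrm{coinv}(\alpha) := |\{(i,j) : i<j,\ \alpha_i < \alpha_j\}|$, the number of adjacent transpositions needed to sort $\alpha$ into weakly decreasing order. For the base case $\mathrm{coinv}(\alpha) = 0$, the composition $\alpha$ is weakly decreasing, and no $K$-Kohnert move can be performed on $D(\alpha)$: the rightmost cell of row $r$ lies in column $\alpha_r$, and since $\alpha_1 \ge \cdots \ge \alpha_r$, every cell of column $\alpha_r$ in rows $1, \dots, r-1$ is occupied, so there is no empty spot above it. Hence $\KKD(\alpha) = \{D(\alpha)\}$, giving $\widetilde{\fLb}_\alpha = x^\alpha = \fLb_\alpha$.

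For the inductive step, fix $i$ with $\alpha_i < \alpha_{i+1}$; since $\mathrm{coinv}(s_i\alpha) = \mathrm{coinv}(\alpha) - 1$, the inductive hypothesis gives $\widetilde{\fLb}_{s_i\alpha} = \fLb_{s_i\alpha}$, and it remains to establish the operator identity $\widetilde{\fLb}_\alpha = \pi_i\big((1+\beta x_{i+1})\widetilde{\fLb}_{s_i\alpha}\big)$. Since $\pi_i\big((1+\beta x_{i+1})\,\cdot\,\big) = \partial_i\big(x_i(1+\beta x_{i+1})\,\cdot\,\big)$ touches only the variables $x_i$ and $x_{i+1}$, the approach is a two-row analysis: partition each of $\KKD(\alpha)$ and $\KKD(s_i\alpha)$ into classes consisting of diagrams that agree outside rows $i$ and $i+1$, and for each class record the generating polynomial in $x_i, x_{i+1}$ obtained by summing $x^{\wt(D)}\beta^{\ex(D)}$ over the diagrams $D$ in the class, with the remaining rows contributing a common monomial factor. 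One then checks that applying $f \mapsto \pi_i((1+\beta x_{i+1})f)$ to the two-row polynomial of a class of $\KKD(s_i\alpha)$ returns exactly the two-row polynomial of the matching class of $\KKD(\alpha)$. Here one uses that $D(s_i\alpha)$ is obtained from $D(\alpha)$ by promoting the cells of row $i+1$ in columns $> \alpha_i$ into the (empty) row $i$ — a sequence of ghost-free $K$-Kohnert moves — so that $\KKD(s_i\alpha) \subseteq \KKD(\alpha)$ and the classes on the two sides correspond; and the factor $1 + \beta x_{i+1}$ should account precisely for the ghost optionally left behind in row $i+1$ when a cell is promoted from row $i+1$ into row $i$.

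The main difficulty will be justifying this two-row reduction in the presence of long-range $K$-Kohnert moves: a move used in building a diagram of $\KKD(\alpha)$ may carry a cell from far above down through, or up out of, rows $i$ and $i+1$, so it is not immediate that the configurations attainable in those two rows are independent of the rest of the diagram, nor that ``finishing'' rows $i,i+1$ after all other moves recovers exactly $\KKD(\alpha)$. I expect this to require a commutation/normal-form lemma — every legal sequence of $K$-Kohnert moves from $D(\alpha)$ can be reordered so that the moves interacting with the pair of rows $\{i, i+1\}$ are performed last — together with an explicit, if intricate, enumeration of the two-row $K$-Kohnert configurations and their ghosts, so that the polynomial identity $\pi_i((1+\beta x_{i+1})(\text{class sum})) = (\text{class sum})$ can be verified case by case. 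A viable alternative, should the direct bookkeeping become unwieldy, is to transport the problem to a tableau model known to compute $\fLb_\alpha$ (for instance set-valued skyline fillings) through a weight- and excess-preserving bijection, reducing the statement to the validity of that model.
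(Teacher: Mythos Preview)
The paper does not prove this statement: it is quoted from~\cite{PY} as an established result, with no argument given here beyond the citation. So there is no ``paper's own proof'' to compare against.

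That said, your proposal is not a proof but a plan, and you are candid about where the gap lies. The base case and the overall inductive scheme are correct, and the observation that $D(s_i\alpha)$ is reachable from $D(\alpha)$ by ghost-free moves (hence $\KKD(s_i\alpha)\subseteq\KKD(\alpha)$) is fine. The real content, however, is exactly the part you defer: the two-row compatibility with the operator $f\mapsto\pi_i((1+\beta x_{i+1})f)$. Partitioning $\KKD(\alpha)$ by ``configuration outside rows $i,i+1$'' and asserting that the attainable row-$\{i,i+1\}$ fillings depend only on that outside configuration is not obviously true for $K$-Kohnert moves, because the rightmost-cell constraint and the no-jumping-over-ghosts constraint couple a column's history to rows far from $i,i+1$. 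Your hoped-for commutation lemma (reorder so that moves touching rows $i,i+1$ come last) is precisely the nontrivial step, and it is neither stated precisely nor proved here; without it the class-by-class operator identity cannot even be formulated.

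Your fallback --- transport to a tableau model already known to compute $\fLb_\alpha$ via a weight- and excess-preserving bijection --- is in fact much closer to how the result is actually established in the literature: the proof in~\cite{PY} does not verify the Demazure--Lascoux recursion directly on $K$-Kohnert diagrams, but rather builds a bijection to another model (set-valued skyline fillings / Kohnert tableaux with a $K$-theoretic decoration) for which the polynomial identity is already known. If you want a self-contained argument, that is the route to take; the direct operator verification you sketch first would require substantial new machinery that your proposal does not supply.
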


\begin{exa} \label{eg_021} Let $\alpha = (0,2,1)$, then 
$KKD(\alpha)$ consists of the following: 
\begin{align*}
\raisebox{1.1cm}{
\begin{ytableau}
\none[1] \cr
\none[2] & \cdot & \cdot \cr
\none[3] & \cdot 
\end{ytableau}}\:,
\raisebox{1.1cm}{
\begin{ytableau}
\none[1] &\none & \cdot \cr
\none[2] & \cdot \cr
\none[3] & \cdot 
\end{ytableau}}\:,
\raisebox{1.1cm}{
\begin{ytableau}
\none[1] & \cdot & \cdot \cr
\none[2]  \cr
\none[3] & \cdot 
\end{ytableau}}\:,
\raisebox{1.1cm}{
\begin{ytableau}
\none[1] & \cdot \cr
\none[2] & \cdot & \cdot \cr
\none[3] 
\end{ytableau}}\:,
\raisebox{1.1cm}{
\begin{ytableau}
\none[1] & \cdot & \cdot \cr
\none[2] & \cdot  \cr
\none[3] 
\end{ytableau}}\:,
\end{align*}
\begin{align*}
\raisebox{1.1cm}{
\begin{ytableau}
\none[1] & \cdot \cr
\none[2] & \cdot & \cdot \cr
\none[3] & \X 
\end{ytableau}}\:,   
\raisebox{1.1cm}{
\begin{ytableau}
\none[1] &\none & \cdot \cr
\none[2] & \cdot & \X \cr
\none[3] & \cdot 
\end{ytableau}}\:,
\raisebox{1.1cm}{
\begin{ytableau}
\none[1] & \cdot & \cdot \cr
\none[2] & \X \cr
\none[3] & \cdot 
\end{ytableau}}\:,
\raisebox{1.1cm}{
\begin{ytableau}
\none[1] & \cdot & \cdot \cr
\none[2] & \cdot \cr
\none[3] & \X
\end{ytableau}}\:,
\raisebox{1.1cm}{
\begin{ytableau}
\none[1] & \cdot & \cdot \cr
\none[2] & \cdot & \X \cr
\none[3] 
\end{ytableau}}\:,
\raisebox{1.1cm}{
\begin{ytableau}
\none[1] & \cdot & \cdot \cr
\none[2] & \cdot & \X\cr
\none[3] & \X
\end{ytableau}}\:.   
\end{align*}

By the rule above, we have
\begin{equation*}
\begin{split}
\fLb_{\alpha} = & \: x_2^2x_3 + x_1x_2x_3 + x_1^2x_3 
+ x_1x_2^2 + x_1^2x_2\\
+ & \: \beta(x_1x_2^2x_3 + x_1x_2^2x_3 + x_1^2x_2x_3 + x_1^2x_2x_3 + x_1^2x_2^2) + \beta^2 x_1^2x_2^2x_3.
\end{split}
\end{equation*}
\end{exa}

\section{Snow diagrams}
\label{S: Cloud and snow}
We associate each diagram with a labeled diagram
called the \definition{snow diagram} which allows us to define two statistics
on diagrams. 
For each diagram $D$, 
we describe the following algorithm that outputs $\snow(D)$.
Cells in $\snow(D)$ can be labeled by $\bullet$ or $\snowflake$.

\begin{enumerate}
\item[-] Iterate 
through rows of $D$ from bottom to top. 
\item[-] In each row $r$ of $D$, find the rightmost cell $(r,c)$ 
with no $\bullet$ in column $c$.
If such an $(r,c)$ exists,
label it by $\bullet$
and put a cell labeled by $\snowflake$ in $(r', c)$ for $r' \in [r-1]$ and $(r', c) \notin D$.
\end{enumerate}

We call cells labeled by $\bullet$ \definition{dark clouds}
and cells labeled by $\snowflake$  \definition{snowflakes}.

\begin{exa}\label{exa.snow}
The following is a diagram together with its snow diagram. 
\begin{align*}
D = 
\raisebox{0.95cm}{
\begin{ytableau}
\none[1] & \none& \none & \cdot \cr
\none[2] & \cdot& \cdot & \none \cr
\none[3] & \none& \none & \cdot \cr
\none[4] & \none& \none & \none \cr
\none[5] & \cdot& \cdot \cr
\end{ytableau}}\,,\quad
\snow(D) = 
\raisebox{0.95cm}{
\begin{ytableau}
\none[1] & \snowflake& \snowflake & \cdot \cr
\none[2] & \bullet& \cdot & \snowflake \cr
\none[3] & \none& \snowflake & \bullet \cr
\none[4] & \none& \snowflake & \none \cr
\none[5] & \cdot& \bullet \cr
\end{ytableau}}\quad.
\end{align*}
\end{exa}

The positions of dark clouds will be important,
so we make the following definition.

\begin{defn}
The \definition{dark cloud diagram} of a diagram $D$,
$\dark(D)$, 
is the set of cells $(r,c)$ that are dark clouds in $\snow(D)$.
\end{defn}

\begin{exa}
In Example~\ref{exa.snow},
$\dark(D) = \{(2,1), (3,3), (5,2) \}$.
\end{exa}

A diagram is a 
\definition{non-attacking rook diagram}
if it has at most one cell in each row or column.
Let $\Rook_+$ be the family
of all non-attacking rook diagrams.

\begin{rem}
\label{R: Basic of dark}
We make the following observations about $\dark(D)$.
\begin{itemize}
\item By construction, $\dark(D) \in \Rook_+$.
\item Take $(r,c) \in D$.
If there are no $r' > r$ with $(r', c) \in \dark(D)$ 
and there are no $c' > c$ with $(r, c') \in \dark(D)$,
then $(r,c) \in \dark(D)$.
\end{itemize}
\end{rem}

Finally, we associate two statistics
to each diagram via its snow diagram.

\begin{defn}\label{defn.raj}
Let $D$ be a diagram.
The \definition{rajcode} of $D$,
$\rajcode(D)$, 
is the weak composition $\wt(\snow(D))$.
Let $\raj(D)$ denote $|\rajcode(D)|$,
the total number of cells in $\snow(D)$.
\end{defn}

\begin{exa}
Continuing with Example~\ref{exa.snow}, we have $\rajcode(D) = (3,3,2,1,2)$ and $\raj(D) = 11$.
\end{exa}

\begin{rem}
Recall that Pechenik, Speyer and Weigandt~\cite{PSW} 
define the statistics $\rajcode(\cdot)$ and $\raj(\cdot)$ on permutations
using increasing subsequences. 
We show that our $\rajcode$ and $\raj$ on Rothe diagrams 
agree with their definitions in Theorem~\ref{T: equivalence of rajcode on perm}.
Therefore, our construction on Rothe diagrams is a diagrammatic way
to compute the leading monomial and degree of $\topGro_w$.
In addition, we notice that positions of dark clouds in $\snow(RD(w))$
are connected to the Schensted insertion and Viennot's geometric construction.
These connections are explored in \S\ref{S: permutations}.
\end{rem}

\section{Proof of Theorem~\ref{T: Top Las}}
\label{S: weak composition}

To prove Theorem~\ref{T: Top Las},
we study top Lascoux polynomials via snow diagrams of key diagrams.
With a slight abuse of notation, we
define $\rajcode(\alpha) := \rajcode(D(\alpha))$, $\raj(\alpha) := \raj(D(\alpha))$
and $\dark(\alpha) = \dark(D(\alpha))$
for $\alpha \in C_+$.
We start by introducing some definitions.

\begin{defn}
A weak composition $\alpha$ is called \definition{snowy}
if its positive entries are all distinct. 
\end{defn}

Our main goal in this section is to establish 
Theorem~\ref{T: Top Las}:

\topLasThm*

This task is broken into four major lemmas
established in the following four subsections. 
In Subsection~\ref{SS: Top Las: Exist}, 
we use $K$-Kohnert diagrams to establish the first major lemma:

\begin{restatable}{lem}{topTermlem}
\label{L: Top Las 1}
The polynomial $\fLb_\alpha$ has the term $x^{\rajcode(\alpha)} \beta^{\raj(\alpha) - |\alpha|}$.
\end{restatable}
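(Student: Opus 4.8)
The plan is to exhibit an explicit $K$-Kohnert diagram $D^\star \in \KKD(\alpha)$ with $\wt(D^\star) = \rajcode(\alpha)$ and $\ex(D^\star) = \raj(\alpha) - |\alpha|$; by Theorem~\ref{thm: k.kohnert} this immediately produces the asserted term $x^{\rajcode(\alpha)}\beta^{\raj(\alpha)-|\alpha|}$ in $\fLb_\alpha$, since distinct $K$-Kohnert diagrams contribute monomials with nonnegative coefficients and no cancellation occurs. The natural candidate for $D^\star$ is the underlying diagram of $\snow(D(\alpha))$ itself: its cells are the original cells of $D(\alpha)$, the dark clouds (which lie on original cells), and the snowflakes. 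The snowflakes are exactly the cells we want to treat as ghosts, so we set $\ex(D^\star)$ to be the number of snowflakes, which is $\raj(\alpha) - |\alpha|$ by Definition~\ref{defn.raj}, and $\wt(D^\star) = \wt(\snow(D(\alpha))) = \rajcode(\alpha)$ by construction.

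The core of the argument is therefore to show that $\snow(D(\alpha))$, with its snowflakes reinterpreted as ghosts, is reachable from $D(\alpha)$ by a sequence of $K$-Kohnert moves. I would build this sequence by processing the rows of $D(\alpha)$ from bottom to top, mirroring the definition of $\snow$. When the snow algorithm selects the rightmost cell $(r,c)$ of row $r$ with no dark cloud yet in column $c$ and installs snowflakes in the empty cells of column $c$ strictly above row $r$, I realize this by repeatedly applying ghost moves to the rightmost cell of row $r$: the cell hops up column $c$, and each ghost move it performs leaves a ghost behind in an empty cell below it, exactly populating the snowflake positions; the cell finally lands at the topmost relevant empty position. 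Since $D(\alpha)$ is left-justified, the rightmost cell of row $r$ is well-defined at each stage, and the cells of row $r$ to the left of column $c$ stay put and become dark clouds of later (higher) rows or remain plain cells — matching the weight bookkeeping. One must check the two structural constraints of a $K$-Kohnert move: the moved cell is the rightmost in its row (true, since we always move the current rightmost cell), and it does not jump over a ghost. The no-jump-over-ghost condition is where the bottom-to-top order and the "rightmost column without a dark cloud" rule pay off: a previously placed ghost in column $c$ sat strictly above a dark cloud in column $c$ from an earlier (lower) row, so by the time we process row $r$ with that same column $c$ it has already been used, hence column $c$ is skipped; thus no move ever needs to pass a ghost in its own column.

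The main obstacle I anticipate is precisely this verification that the $K$-Kohnert moves never violate the "cannot jump over any ghosts" rule, together with confirming that after all moves the multiset of occupied cells is exactly the underlying diagram of $\snow(D(\alpha))$ with the snowflakes as ghosts — i.e.\ that no cell is accidentally double-counted and that the dark-cloud cells end up as genuine (non-ghost) cells in the right rows. I would handle this by an induction on the number of processed rows, maintaining the invariant that after processing rows $n, n-1, \dots, r+1$, the current ghost diagram restricted to rows $\geq r+1$ and to the columns already carrying a dark cloud agrees with $\snow(D(\alpha))$ there, and that every ghost lies strictly above the dark cloud of its column. Remark~\ref{R: Basic of dark} and the observation that $\dark(D(\alpha)) \in \Rook_+$ give the needed control: at most one dark cloud per column means each column is "consumed" exactly once, so the snowflake columns are pairwise compatible with the jump constraint. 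Once the invariant is carried through row $1$, reading off $\wt$ and $\ex$ finishes the proof.
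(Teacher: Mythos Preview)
Your overall strategy --- build an explicit $K$-Kohnert diagram whose underlying diagram is that of $\snow(D(\alpha))$ and whose excess equals the number of snowflakes --- is the same as the paper's. But the implementation you sketch does not work, for two related reasons.

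First, the identification ``snowflakes are exactly the cells we want to treat as ghosts'' is wrong at the level of positions. In a $K$-Kohnert ghost move the ghost is deposited at the \emph{source} cell, which is strictly \emph{below} the landing cell; hence no ghost can ever occupy row~$1$. Snowflakes, by contrast, sit strictly \emph{above} their dark cloud and very often occupy row~$1$ (e.g.\ $\alpha=(0,2,1)$, where both snowflakes are in row~$1$). So the labeled diagram you describe is never a $K$-Kohnert diagram. The count of ghosts does match the count of snowflakes, but the positions do not, and your description ``each ghost move \dots\ leaves a ghost behind \dots\ exactly populating the snowflake positions'' cannot be right.

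Second, and more seriously, the bottom-to-top row processing breaks when the dark cloud $(r,c)$ is not the rightmost cell of its row, i.e.\ when $c<\alpha_r$. Take $\alpha=(1,3,4,0,4,3)$: the dark clouds lie at $(2,1),(3,2),(6,3),(5,4)$. Processing rows $6,5,4$ first fills columns $3$ and $4$ completely above row~$3$. Now at row~$3$ the rightmost cell is $(3,4)$, but column~$4$ has no empty spot above it, so $(3,4)$ cannot move; consequently $(3,2)$ is never rightmost and can never be moved. Your procedure stalls. The paper avoids this by iterating over dark clouds in \emph{increasing column order} $c_1<\cdots<c_m$, and at each dark cloud $(r_i,c_i)$ it first moves the cells $(r_i,\alpha_{r_i}),(r_i,\alpha_{r_i}-1),\dots,(r_i,c_i+1)$ up by \emph{non}-ghost moves (these columns have not yet been processed, so there is room), and only then pushes $(r_i,c_i)$ up with ghost moves. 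A short lemma shows that after step $i$ the diagram remains left-justified once the first $c_i$ columns are ignored; this is exactly what guarantees the rightmost-cell and no-ghost-jump conditions at each step.
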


Lemma~\ref{L: Top Las 1} proves
$\topLas_\alpha$ has degree at least $\raj(\alpha)$.
To show $\topLas_\alpha$ indeed has degree $\raj(\alpha)$,
we need the following equivalence relation on weak compositions. 

\begin{defn}
Let $\alpha$ and $\gamma$ be two weak compositions. 
We say $\alpha$ is \definition{rajcode equivalent} to $\gamma$, 
denoted as $\alpha \sim \gamma$, 
if $\rajcode(\alpha) = \rajcode(\gamma)$. 
\end{defn}

\begin{exa}
\label{E: sim of weak comp}
Let $\alpha = (2,0,4,3,1)$ and $\gamma = (3,1,4,3,1)$.
Then we have:
\begin{align*}
D(\alpha) = 
\raisebox{0.95cm}{
\begin{ytableau}
\none[1] & \cdot& \cdot & \none & \none \cr
\none[2] & \none& \none & \none & \none \cr
\none[3] & \cdot& \cdot & \cdot & \cdot \cr
\none[4] & \cdot& \cdot & \cdot & \none \cr
\none[5] & \cdot& \none & \none & \none \cr
\end{ytableau}}\quad,
\quad \quad
\snow(D(\alpha)) = 
\raisebox{0.95cm}{
\begin{ytableau}
\none[1] & \cdot& \bullet & \snowflake & \snowflake \cr
\none[2] & \snowflake &  & \snowflake & \snowflake \cr
\none[3] & \cdot& \cdot & \cdot & \bullet \cr
\none[4] & \cdot& \cdot & \bullet & \none \cr
\none[5] & \bullet& \none & \none & \none \cr
\end{ytableau}}\quad,
\\
\end{align*}

\begin{align*}
D(\gamma) = 
\raisebox{0.95cm}{
\begin{ytableau}
\none[1] & \cdot& \cdot & \cdot & \none \cr
\none[2] & \cdot& \none & \none & \none \cr
\none[3] & \cdot& \cdot & \cdot & \cdot \cr
\none[4] & \cdot& \cdot & \cdot & \none \cr
\none[5] & \cdot& \none & \none & \none \cr
\end{ytableau}}\quad,
\quad \quad
\snow(D(\gamma)) = 
\raisebox{0.95cm}{
\begin{ytableau}
\none[1] & \cdot& \bullet & \cdot & \snowflake \cr
\none[2] & \cdot &  & \snowflake & \snowflake \cr
\none[3] & \cdot& \cdot & \cdot & \bullet \cr
\none[4] & \cdot& \cdot & \bullet & \none \cr
\none[5] & \bullet& \none & \none & \none \cr
\end{ytableau}}\quad.
\end{align*}

Be aware that the cell $(2,2)$ is not in
$\snow(D(\alpha))$ or $\snow(D(\gamma))$.
Observe that $\rajcode(\alpha) = (4,3,4,3,1) = \rajcode(\gamma)$,
so $\alpha \sim \gamma$.
\end{exa}

In Subsection~\ref{SS: equivalence},
we study this equivalence relation.
We show that snowy weak compositions form a complete set of representatives:

\begin{restatable}{lemma}{LasB}
\label{L: snowy weak composition in equivalence class}
For each equivalence class of $\sim$, 
there is a unique $\alpha$ such that $\alpha$ is snowy. 
Moreover, if $\gamma \sim \alpha$ and $\alpha$ is snowy, 
then $\gamma_r \geqslant \alpha_r$ for all $r$.
In other words, a snowy weak composition is the
unique entry-wise minimum in each equivalence class. 
\end{restatable}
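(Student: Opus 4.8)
The plan is to understand the snow diagram construction well enough to describe, given any $\alpha$, exactly which entries of $\rajcode(\alpha)$ "forget" information about $\alpha$, and to show that the only freedom in the equivalence class is to raise those entries. First I would analyze the structure of $\dark(\alpha)$ for a key diagram $D(\alpha)$. Since $D(\alpha)$ is left-justified, the algorithm producing $\snow(D(\alpha))$ has a clean description: processing rows from bottom to top, in row $r$ the rightmost column of $D(\alpha)$ not yet occupied by a dark cloud gets the dark cloud, i.e. $\dark(\alpha)$ records, for each nonempty row, a column $c \le \alpha_r$, and these columns are distinct across rows (Remark \ref{R: Basic of dark}). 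I would first establish a normal form: sort the rows of $D(\alpha)$ by length; the dark cloud of a row of length $\ell$ sits in column $\ell$ unless that column is "blocked" by a strictly longer row processed earlier (lower down), in which case it slides left. From this one reads off $\rajcode(\alpha)_r = \alpha_r + (\text{number of snowflakes placed in row } r)$, where snowflakes in row $r$ come from dark clouds sitting in rows $r' > r$ in columns $c \notin$ (row $r$ of $D(\alpha)$), i.e. columns $c > \alpha_r$.

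The key structural claim I would aim for: $\alpha$ is snowy if and only if no dark cloud of $D(\alpha)$ is "pushed left", equivalently the dark cloud of each nonempty row $r$ sits exactly in column $\alpha_r$. Indeed, a collision forcing a leftward push happens precisely when two positive entries of $\alpha$ are equal (or when a shorter row's natural column is occupied by a longer row's pushed cloud, which again traces back to an equality further down). Granting this, for snowy $\alpha$ one gets the explicit formula $\rajcode(\alpha)_r = \max(\alpha_r, \#\{r' : \alpha_{r'} > \alpha_r\} \text{-type count})$ — more precisely $\rajcode(\alpha)_r$ is $\alpha_r$ plus the number of rows $r' > r$ whose dark-cloud column exceeds $\alpha_r$, which for snowy $\alpha$ equals $\#\{r' > r : \alpha_{r'} > \alpha_r\}$. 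To prove existence of a snowy representative I would start from arbitrary $\gamma$ and repeatedly "lower" any entry that is the larger of an equal pair (or more carefully, lower the entry whose dark cloud was pushed) down to the value it would have had absent the collision; I must check each such lowering preserves $\dark$ and hence $\rajcode$, and that the process terminates at a snowy composition. For uniqueness and the minimality statement, I would show that within an equivalence class, if $\gamma \sim \alpha$ with $\alpha$ snowy, then $\dark(\gamma) = \dark(\alpha)$ (both equal the dark cloud diagram determined by the common $\rajcode$), and then argue $\gamma_r \ge (\text{column of the dark cloud in row } r) = \alpha_r$, since in any diagram a row containing a cell in column $c$ has length $\ge c$, while the snowflakes above only add to, never subtract from, $\gamma_r$ — forcing $\gamma_r \ge \alpha_r$, with equality exactly when $\gamma$ has no "excess" length and no pushed clouds, i.e. when $\gamma$ is snowy, giving uniqueness.

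The main obstacle I anticipate is the bookkeeping around cascading collisions: when several rows share lengths, or when a pushed dark cloud itself blocks a shorter row and forces a further push, the relationship between "equal positive entries of $\alpha$" and "leftward-pushed dark clouds" is not literally one-to-one and needs a careful inductive argument (processing rows bottom-to-top, maintaining the set of occupied columns). I would handle this by an induction on rows, tracking the invariant that after processing rows $n, n-1, \dots, r$, the occupied columns are exactly $\{1, 2, \dots, k\}$ for some $k$ together with a controlled description of which were "forced", and showing that snowiness is equivalent to the occupied-column set always being a prefix $\{1,\dots,k\}$ with no forcing. A secondary subtlety is handling zero entries of $\alpha$ interleaved with positive ones (empty rows receive no dark cloud but do receive snowflakes), so the indexing $r$ versus "nonempty rows" must be tracked precisely; the tail lexicographic / highest-row conventions mean I should be careful that raising a high-indexed entry of $\gamma$ does not disturb lower rows, which follows because the algorithm runs bottom-to-top and a longer top row only adds snowflakes below it.
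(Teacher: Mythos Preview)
Your minimality argument is essentially the paper's: once you know $\dark(\gamma)=\dark(\alpha)$ and that for snowy $\alpha$ the dark cloud in row $r$ sits in column $\alpha_r$, the inequality $\gamma_r\ge\alpha_r$ is immediate since $(r,\alpha_r)\in\dark(\gamma)\subseteq D(\gamma)$. Uniqueness then follows from minimality applied in both directions. So that part matches.

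Where you diverge is existence. You propose to reach a snowy representative by iteratively lowering entries whose dark clouds were pushed left, checking at each step that $\dark$ is preserved. This can be made to work, but the paper bypasses it entirely: given $\gamma$, simply \emph{define} $\alpha$ by setting $\alpha_r$ equal to the column of the dark cloud in row $r$ of $\dark(\gamma)$ (and $0$ if that row has none). Since $\dark(\gamma)$ is a non-attacking rook diagram, the positive $\alpha_r$ are automatically distinct, so $\alpha$ is snowy; and it is a one-line check that $\dark(\alpha)=\dark(\gamma)$. No iteration, no cascading-collision bookkeeping.

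Two concrete issues with your outline. First, the invariant you propose for handling cascades---that for snowy $\alpha$ the set of occupied dark-cloud columns after processing rows $n,\dots,r$ is always a prefix $\{1,\dots,k\}$---is false. Take $\alpha=(2,0,4,3,1)$: after processing rows $5,4,3$ the occupied columns are $\{1,3,4\}$, yet $\alpha$ is snowy. So that line of attack will not go through as stated. Second, both your approach and the paper's hinge on the fact that $\rajcode(\gamma)$ determines $\dark(\gamma)$; you invoke this (``both equal the dark cloud diagram determined by the common $\rajcode$'') but give no argument. The paper isolates this as a standalone lemma, proved by reverse induction on rows: knowing the dark clouds strictly below row $r$ and the value $\rajcode(\gamma)_r$, one can reconstruct whether row $r$ carries a dark cloud and in which column. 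You should either supply that argument or, once you have it, drop the lowering procedure in favor of the direct construction above.
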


In Subsection~\ref{SS: snowy}, 
we focus on $\topLas_\alpha$ for snowy $\alpha$ and give a recursive description of $\topLas_\alpha$,
which leads to the third major lemma.

\begin{restatable}{lemma}{LasC}
\label{L: snowy leading}
If $\alpha$ is snowy, 
then $x^{\rajcode(\alpha)}$ is the leading monomial of 
$\topLas_\alpha$ with coefficient 1.
\end{restatable}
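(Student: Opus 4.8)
The plan is to set up an induction on the number of "defects" of $\alpha$ from being weakly decreasing, using the recursive definition of $\fLb_\alpha$ via the isobaric divided differences $\pi_i$. Since $\alpha$ is snowy, when $\alpha$ is already weakly decreasing it must be a partition with distinct parts, so $\fLb_\alpha = x^\alpha$, $\topLas_\alpha = x^\alpha$ itself, and one checks directly from the snow diagram construction that $\rajcode(\alpha) = \alpha$ in this case (the key diagram of a partition is already "staircase-compatible" so no snowflakes are created and $\dark(\alpha)$ picks out exactly the rightmost cell of each row): this is the base case, where the leading monomial is $x^\alpha$ with coefficient $1$. For the inductive step, I would pick an ascent $\alpha_i < \alpha_{i+1}$ and write $\fLb_\alpha = \pi_i\big((1+\beta x_{i+1})\fLb_{s_i\alpha}\big)$, so that $\topLas_\alpha = \pib_i\big(x_{i+1}\,\topLas_{s_i\alpha}\big)$ where $\pib_i(f) := \partial_i(x_i f)$ is the top-degree-preserving part (the $(1+\beta x_{i+1})$ factor contributes only its degree-$1$-in-$\beta$ part $\beta x_{i+1}$ when we pass to top degree, since $\raj$ strictly increases along ascents — this needs a short argument, or can be extracted from Lemma~\ref{L: Top Las 1} applied to both $\alpha$ and $s_i\alpha$ together with Lemma~\ref{L: snowy weak composition in equivalence class}). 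I would then need to know that $s_i\alpha$ is "closer to weakly decreasing" and remains snowy — snowiness is preserved since $s_i\alpha$ is a rearrangement of $\alpha$, and the usual statistic $\sum_j \binom{\alpha_j}{2}$ or the number of inversions of the sorting permutation decreases, giving a valid induction.

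The core of the argument is then a \emph{combinatorial lemma on snow diagrams under the swap $s_i$}: I want to show that $\rajcode(\alpha)$ is obtained from $\rajcode(s_i\alpha)$ by the operation $f \mapsto x_{i+1}\cdot \pib_i(f)$ reads off on exponent vectors, i.e. that $\rajcode(\alpha) = $ (the exponent vector obtained by applying the Demazure-type crystal operator / $\pi_i$-action to $x^{\rajcode(s_i\alpha)}$, shifted by $e_{i+1}$). Concretely, comparing $\snow(D(\alpha))$ with $\snow(D(s_i\alpha))$ when $\alpha_i < \alpha_{i+1}$: swapping rows $i$ and $i+1$ of the key diagram and re-running the bottom-to-top algorithm, I expect $\rajcode(\alpha)_i$ and $\rajcode(\alpha)_{i+1}$ to be determined by $\rajcode(s_i\alpha)_i, \rajcode(s_i\alpha)_{i+1}$ in a controlled way, and all other entries to be unchanged. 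This is exactly where I expect the main difficulty to lie: verifying that the dark-cloud placement in rows $i, i+1$ transforms predictably (one must track whether the rightmost cloud-free cell in each of the two rows changes when the rows are swapped, and how snowflakes in rows $<i$ are affected — they should not be, since columns with a dark cloud at or below a given row are unaffected by reordering rows below). I would isolate this as a standalone lemma about $\dark(D(\alpha))$ versus $\dark(D(s_i\alpha))$.

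Granting that combinatorial lemma, the leading-term bookkeeping is routine: $\pib_i$ applied to a monomial $x^\gamma$ with $\gamma_i \le \gamma_{i+1}$ produces $x_i^{\gamma_i}x_{i+1}^{\gamma_{i+1}} + x_i^{\gamma_i+1}x_{i+1}^{\gamma_{i+1}-1} + \cdots + x_i^{\gamma_{i+1}}x_{i+1}^{\gamma_i}$ (all coefficient $1$), and multiplying by $x_{i+1}$ and taking the tail-lexicographically largest monomial selects the term with the largest exponent in position $i+1$; by the combinatorial lemma this largest term is exactly $x^{\rajcode(\alpha)}$, and since the induction hypothesis gives coefficient $1$ for the leading term of $\topLas_{s_i\alpha}$ and $\pib_i$ does not merge it with any other term of higher-or-equal tail-lex order (here one uses that $\topLas_{s_i\alpha}$ is a nonnegative-integer combination of monomials, by Theorem~\ref{thm: k.kohnert} and Lemma~\ref{L: Expands positively}, so no cancellation occurs and the coefficient can only stay $1$ or grow — and a degree/support count rules out growth), the coefficient of $x^{\rajcode(\alpha)}$ in $\topLas_\alpha$ is $1$. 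I would close by remarking that the non-cancellation is automatic from positivity, which is the one place the $K$-Kohnert formula is genuinely used.
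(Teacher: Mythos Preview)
Your overall strategy --- induct on the number of ascents, use the recursion $\topLas_\alpha = \pi_i(x_{i+1}\topLas_{s_i\alpha})$ (this is Lemma~\ref{L: recursive top Las}), and track the leading monomial via the rule $\rajcode(\alpha) = s_i\rajcode(s_i\alpha) + e_i$ (Corollary~\ref{C: snowy changes}) --- matches the paper's. But your final ``no cancellation from positivity'' step is a genuine gap. The operator $f \mapsto \pi_i(x_{i+1}f)$ is \emph{not} positivity-preserving monomial by monomial: $\pi_i(x_{i+1}x^\gamma)$ is a positive sum only when $\gamma_i > \gamma_{i+1}$; it vanishes when $\gamma_i = \gamma_{i+1}$ and is a \emph{negative} sum when $\gamma_i < \gamma_{i+1}$ (cf.\ Remark~\ref{R: leading}; your displayed formula has the inequality reversed). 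So nonnegativity of $\topLas_{s_i\alpha}$ does not by itself rule out cancellation. More seriously, a non-leading monomial $x^\gamma$ of $\topLas_{s_i\alpha}$ with $\gamma_i$ exceeding $\rajcode(s_i\alpha)_i$ can, after applying $\pi_i(x_{i+1}\cdot)$, produce a term that is tail-lex \emph{larger} than the target $x^{\rajcode(\alpha)}$: for instance, if the leading exponent is $(2,1)$ and $\gamma = (3,0)$ in the relevant two coordinates (agreeing elsewhere), then $\pi_i(x_{i+1}x^\gamma)$ has leading term with $x_{i+1}$-exponent $3 > 2$. Your ``degree/support count'' is too vague to exclude this.

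The paper closes exactly this gap with Lemma~\ref{L: monomials from operator}: the expected leading monomial of $\pi_i(x_{i+1}f)$ survives with the expected coefficient \emph{provided} every monomial of $f$ has $x_i$-power bounded by that of the leading monomial. To secure this hypothesis the paper does not pick an arbitrary ascent but the \emph{largest} index $r$ with $\alpha_r < \alpha_{r+1}$; then $\alpha_{r+1} \geq \alpha_{r+2} \geq \cdots$, so every $K$-Kohnert diagram of $s_r\alpha$ has at most $\alpha_{r+1}$ cells in row $r$, while $\rajcode(s_r\alpha)_r = \alpha_{r+1}$ by Lemma~\ref{L: dark, rajcode and raj of snowy weak composition}. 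This specific choice of ascent, together with the technical Lemma~\ref{L: monomials from operator}, is the missing ingredient in your argument.
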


Finally, we devote the Subsection~\ref{SS: last major lemma} to proving the last
major lemma:

\begin{restatable}{lemma}{LasD}
\label{L: Top Las similar}
If $\alpha \sim \gamma$, 
then $\topLas_\alpha = c\topLas_\gamma$ for some $c \neq 0$.
\end{restatable}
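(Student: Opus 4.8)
The plan is to prove Lemma~\ref{L: Top Las similar} by reducing from an arbitrary pair $\alpha \sim \gamma$ to the special case where $\gamma$ is the unique snowy representative of the equivalence class, and then to handle that special case by a single ``elementary move'' that builds $\alpha$ up from $\gamma$ one cell at a time. By Lemma~\ref{L: snowy weak composition in equivalence class}, each equivalence class of $\sim$ contains a unique snowy $\alpha_{\min}$, and it is the entry-wise minimum of the class; so if $\alpha \sim \gamma$ then both are $\sim \alpha_{\min}$, and it suffices to show $\topLas_\alpha = c\,\topLas_{\alpha_{\min}}$ for every $\alpha$ in the class. I would therefore fix the snowy representative $\eta := \alpha_{\min}$ and prove: for every $\alpha \sim \eta$ with $\alpha \neq \eta$, there is a weak composition $\alpha'$ with $\alpha' \sim \eta$, $|\alpha'| = |\alpha| - 1$ (i.e.\ $\alpha'$ obtained from $\alpha$ by deleting one box), and $\topLas_\alpha = c'\,\topLas_{\alpha'}$ for some $c' \neq 0$. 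Iterating this and invoking Lemma~\ref{L: snowy leading} (which in particular identifies $\topLas_\eta$ up to scalar) finishes the induction on $|\alpha| - |\eta|$.

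The heart of the argument is the single-box step, and this is where I would use the $K$-Kohnert diagram formula (Theorem~\ref{thm: k.kohnert}) together with the structure of the snow diagram. If $\alpha \neq \eta$ but $\alpha \sim \eta$, then since $\eta$ is the entry-wise minimum there is a row $r$ with $\alpha_r > \eta_r$; I expect the right choice is a row $r$ where the ``extra'' box at position $(r,\alpha_r)$ in $D(\alpha)$ lies strictly to the left of the dark cloud governing column $\alpha_r$ — concretely, a box that the snow construction does not promote to a dark cloud, so that deleting it changes neither $\dark(D(\alpha))$ nor the snowflake pattern, hence preserves $\rajcode$. Let $\alpha'$ be $\alpha$ with that box removed; one checks directly from the snow algorithm that $\alpha' \sim \alpha \sim \eta$. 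Then I would compare the $K$-Kohnert diagrams of $\alpha$ and $\alpha'$: there should be a weight- and excess-preserving (up to a global shift) bijection between $\KKD(\alpha)$ and a scalar multiple of $\KKD(\alpha')$, exhibiting $\topLas_\alpha = c'\,\topLas_{\alpha'}$. An alternative route for this step, possibly cleaner, is to realize the box deletion via a divided-difference / Demazure-type identity: if the relevant rows of $\alpha$ are related by $\alpha_i < \alpha_{i+1}$ in a way that makes $\pi_i$ or $\pab_i$ act transparently on top-degree components, one gets $\topLas_\alpha = \topLas_{s_i\alpha}$ or a $\beta$-shift thereof, and one argues that repeated such moves reach $\eta$. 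I would try both and keep whichever makes the bookkeeping shortest; the recursive description of $\topLas_\alpha$ for snowy $\alpha$ developed in Subsection~\ref{SS: snowy} is likely the tool that makes the Demazure route work.

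The main obstacle I anticipate is verifying that the single-box deletion can always be chosen so that it \emph{simultaneously} (i) strictly decreases $|\alpha| - |\eta|$, (ii) stays inside the equivalence class (preserves $\rajcode$), and (iii) induces an honest proportionality of top-degree Lascoux polynomials rather than merely of leading monomials. Parts (i)--(ii) are a combinatorial statement about the snow algorithm — essentially that any box of $D(\alpha)$ lying weakly left of its row's dark cloud and not itself forced to be a dark cloud (cf.\ Remark~\ref{R: Basic of dark}) can be removed without disturbing $\dark$ or the snowflakes — and I expect a short case analysis on the position of the removed box relative to the dark clouds in its row and below it. Part (iii) is the real content: one needs the full polynomial identity, for which the $K$-Kohnert bijection must be described explicitly and checked to be excess-preserving up to a uniform $\beta$-power, so that passing to $\widehat{(\cdot)}$ yields exact proportionality. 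If a clean global bijection is elusive, the fallback is to combine Corollary~\ref{C: top Gro to top Las}-style positivity with the already-established leading-term statements (Lemma~\ref{L: Top Las 1} and Lemma~\ref{L: snowy leading}) and a dimension/degree count: knowing $\deg \topLas_\alpha = \raj(\alpha) = \raj(\eta)$ and that both have the same leading monomial $x^{\rajcode(\eta)}$, one argues the space of top-degree Lascoux polynomials with a fixed $\rajcode$ is one-dimensional. I would develop the bijective argument first and retain the degree-count argument as a safety net.
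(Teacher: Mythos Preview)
Your overall reduction to the snowy representative is right, and the Demazure-operator route you mention in passing is indeed the paper's tool. But the induction scheme you set up has a genuine gap. You induct on $|\alpha| - |\eta|$ and propose to delete a single box at each step; yet neither mechanism you offer actually accomplishes this. The $K$-Kohnert bijection is left unspecified, and (as you yourself flag) may well not exist in the required excess-preserving form. More seriously, your alternative --- applying $s_i$ --- does not delete a box (since $|s_i\alpha| = |\alpha|$) and does not keep you in the same equivalence class: in general $s_i\alpha \not\sim \alpha$, so ``repeated such moves reach $\eta$'' cannot work as stated. Your fallback dimension argument is circular: proving that the span of top Lascoux polynomials with a fixed $\rajcode$ is one-dimensional is exactly the content of the lemma.

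The paper sidesteps all of this with a different induction. It inducts on $\raj(\alpha)$, not on $|\alpha| - |\eta|$, and at each step applies the \emph{same} $s_r$ to \emph{both} $\alpha$ and the snowy $\gamma$ simultaneously. The key structural fact (Lemma~\ref{L: s_i fix sim}) is that whenever the underlying diagram of $\snow(D(\alpha))$ is strictly contained in $\overline{D(\alpha)}$, one can choose $r$ so that $\alpha_r < \alpha_{r+1}$, $\gamma_r < \gamma_{r+1}$, and $s_r\alpha \sim s_r\gamma$, with $\raj$ dropping by one on each side. Then the Lascoux recursion $\fLb_\alpha = \pi_r(\fLb_{s_r\alpha}) + \beta\,\pi_r(x_{r+1}\fLb_{s_r\alpha})$, together with Lemma~\ref{L: Top Las 1} pinning the $\beta$-degree, gives $\topLas_\alpha = \pi_r(x_{r+1}\topLas_{s_r\alpha})$; by Lemma~\ref{L: recursive top Las} the identical identity holds for $\gamma$. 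Proportionality now follows from the inductive hypothesis applied to $s_r\alpha \sim s_r\gamma$. (In the remaining case $\snow(D(\alpha)) = \overline{D(\alpha)}$, both $\topLas_\alpha$ and $\topLas_\gamma$ are the single monomial $x^{\wt(\overline{D(\alpha)})}$, by Corollary~\ref{C: sim implies same overline}.) The idea you are missing is that one never tries to stay inside a fixed equivalence class; one moves both compositions in parallel through a chain of classes until both reach zero.
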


Once we have these four major lemmas, 
we can easily check Theorem~\ref{T: Top Las}.

\begin{proof}
First, statement (c) follows from Lemma~\ref{L: snowy weak composition in equivalence class} and Lemma~\ref{L: snowy leading}. 

Given a weak composition $\alpha$. Let $\beta$ be the unique snowy weak composition such that $\alpha \sim \beta$. Statement (a) follows from  Lemma~\ref{L: snowy leading} and Lemma~\ref{L: Top Las similar}.

For statement (b), the backward direction is just Lemma~\ref{L: Top Las similar}.
For the forward direction, 
if $\topLas_\alpha$ is a scalar multiple of $\topLas_\gamma$,
then they have the same leading monomial.
By statement (a), we have $\rajcode(\alpha) = \rajcode(\gamma)$.
\end{proof}

\subsection{Proof of Lemma~\ref{L: Top Las 1}}
\label{SS: Top Las: Exist}

We show the monomial $ x^{\rajcode(\alpha)}\beta^{\raj(\alpha) - |\alpha|}$ 
exists in $\fLb_\alpha$.
We give an algorithm whose output is a $K$-Kohnert diagram for $\alpha$, which has the same underlying diagram as $\snow(D(\alpha))$. First, observe that $\snow(D(\alpha))$ contains no dark clouds if and only if $\alpha$ contains only zero entries. In this case, $\topLas_\alpha = 1$ and $\rajcode(\alpha)$ only has zero entries. Our claim is immediate. 
In the rest of this subsection, we assume $\alpha$ is a weak composition
with at least one positive entry, and thus $\snow(D(\alpha))$ has at least one dark cloud.
To describe the algorithm, 
we introduce two useful moves on ghost diagrams.
\begin{defn}
Let $D$ be a ghost diagram.
Let $(r,c)$ be a non-ghost cell in $D$
and let $(r', c)$ be the highest empty space in column $c$.
If $r' < r$,
let $UP_{(r,c)}(D)$ be the diagram we get after 
moving $(r,c)$ to $(r',c)$. 
Let $UP^G_{(r,c)}(D)$ be the diagram we get after 
moving $(r,c)$ to $(r',c)$
and putting a ghost on $(r,c)$ and all empty spaces between $(r,c)$ and $(r',c)$.
If $r' > r$, define 
$UP^G_{(r,c)}(D) = UP_{(r,c)}(D) = D$.
\end{defn}

\begin{rem}
\label{R: K-Kohnert move}
Assume $UP_{(r,c)}$ or $UP^G_{(r,c)}$ moves a cell to $(r',c)$.
Then this move can be achieved by a sequence of $K$-Kohnert moves
if both of the following conditions hold for each $r' < j \leq r$:
\begin{itemize}
\item If $(j, c)\notin D$,
then $D$ has no cell to the right of column $c$ in row $j$.
\item If $(j, c)\in D$, then it is not a ghost cell.
\end{itemize}
\end{rem}

Now we can describe the algorithm.
Let $D^{0} = D(\alpha)$. Recall by Remark~\ref{R: Basic of dark}, there is at most one dark cloud in each column of $\snow(D(\alpha))$. 
We can label all the dark clouds as $(r_1,c_1),\dots, (r_{m},c_{m})$ where $c_1<c_2\dots <c_m$ for some $m\geqslant 1$. We iterate $i$ from $1$ to $m$.
At iteration $i$, compute

\begin{align}
\label{EQ: UP}
D^i = UP^G_{(r_i,c_i)} \circ UP_{(r_i,c_i + 1)} \cdots \circ UP_{(r_i,\alpha_{r_i})} (D^{i-1})\,.    
\end{align}

\begin{exa}
Consider $\alpha = (1,3,4,0,4,3)$, we compute its snow diagram
and we have the dark clouds at $(2,1),(3,2),(6,3),(5,4)$. We compute $D^4$ according to the above algorithm. 
\[
\snow(D(\alpha)) = 
\raisebox{1cm}{\begin{ytableau}
\none[1] & \cdot & \snowflake & \snowflake & \snowflake \cr
\none[2] & \bullet & \cdot & \cdot & \snowflake \cr
\none[3] & \cdot & \bullet & \cdot & \cdot \cr
\none[4] & \none & \none & \snowflake & \snowflake\cr
\none[5] & \cdot& \cdot & \cdot & \bullet \cr
\none[6] & \cdot& \cdot & \bullet \cr
\end{ytableau}}\quad\quad\quad
D^0 = 
\raisebox{1cm}{\begin{ytableau}
\none[1] & \cdot \cr
\none[2] & \cdot & \cdot & \cdot \cr
\none[3] & \cdot & \cdot & \cdot & \cdot \cr
\none[4] & \none \cr
\none[5] & \cdot & \cdot & \cdot & \cdot \cr
\none[6] & \cdot & \cdot & \cdot \cr
\end{ytableau}}\, \xrightarrow[(2,1)]{}
\raisebox{1cm}{\begin{ytableau}
\none[1] & \cdot & \cdot & \cdot \cr
\none[2] & \cdot \cr
\none[3] & \cdot & \cdot & \cdot & \cdot \cr
\none[4] & \none \cr
\none[5] & \cdot & \cdot & \cdot & \cdot \cr
\none[6] & \cdot & \cdot & \cdot \cr
\end{ytableau}}\,
\]
\[
 \xrightarrow[(3,2)]{}\raisebox{1cm}{\begin{ytableau}
\none[1] & \cdot & \cdot & \cdot & \cdot \cr
\none[2] & \cdot & \cdot & \cdot \cr
\none[3] & \cdot & \X  \cr
\none[4] & \none \cr
\none[5] & \cdot & \cdot & \cdot & \cdot \cr
\none[6] & \cdot & \cdot & \cdot \cr
\end{ytableau}}\,\xrightarrow[(6,3)]{}
\raisebox{1cm}{\begin{ytableau}
\none[1] & \cdot & \cdot & \cdot & \cdot \cr
\none[2] & \cdot & \cdot & \cdot \cr
\none[3] & \cdot & \X &  \cdot \cr
\none[4] & \none & \none & \X \cr
\none[5] & \cdot & \cdot & \cdot & \cdot \cr
\none[6] & \cdot & \cdot & \X \cr
\end{ytableau}}\,\xrightarrow[(5,4)]{}
\raisebox{1cm}{\begin{ytableau}
\none[1] & \cdot & \cdot & \cdot & \cdot \cr
\none[2] & \cdot & \cdot & \cdot & \cdot \cr
\none[3] & \cdot & \X    &  \cdot    &  \X\cr
\none[4] & \none & \none & \X    & \X \cr
\none[5] & \cdot & \cdot & \cdot & \X \cr
\none[6] & \cdot & \cdot & \X \cr
\end{ytableau}} = D^{4}.
\]
\end{exa}

We observe that in the previous example, 
$D^4$ has the same underlying diagram as $\snow(D(\alpha))$.
This is true in general.

\begin{lem}
The labeled diagram $D^m$ defined by~(\ref{EQ: UP})
has the same underlying diagram as $\snow(D(\alpha))$.
\end{lem}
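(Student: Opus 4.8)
The statement claims that the labeled diagram $D^m$ produced by iterating (\ref{EQ: UP}) has the same underlying diagram (forgetting ghosts) as $\snow(D(\alpha))$. My plan is to prove this by induction on $i$, tracking at each stage exactly how the rows involved change. The key structural fact I would exploit: the dark clouds are $(r_1,c_1),\dots,(r_m,c_m)$ with $c_1<\dots<c_m$, and by Remark~\ref{R: Basic of dark} each column contains at most one dark cloud, so these are genuinely distinct columns. A convenient invariant to carry through the induction is this: after iteration $i$, the underlying diagram of $D^i$ agrees with $\snow(D(\alpha))$ in all columns $\le c_i$, and agrees with $D(\alpha)$ (the original key diagram) in all columns $>c_i$. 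Since $c_m$ is the last column containing any cell of $D(\alpha)$ that is not already stabilized — more precisely, since every column to the right of $c_m$ in $\snow(D(\alpha))$ contains no dark cloud and hence (by the snow algorithm) no snowflakes and the same cells as $D(\alpha)$ — the invariant at $i=m$ gives the claim.

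\textbf{Key steps, in order.} First I would unwind what a single iteration (\ref{EQ: UP}) does to row $r_i$: the composite $UP_{(r_i,\alpha_{r_i})}\circ\cdots$ sweeps the cells of row $r_i$ lying in columns $c_i+1,\dots,\alpha_{r_i}$ upward one at a time (leftmost of those swept last, since the composition is read right-to-left), each landing in the highest empty spot of its column; then $UP^G_{(r_i,c_i)}$ moves the cell in column $c_i$ up to the highest empty spot, leaving ghosts behind it. The crucial claim to verify is that \emph{where each of these cells lands} is precisely the cell-set that $\snow(D(\alpha))$ occupies — i.e. the snowflakes and dark cloud of column $c_i$ together with the surviving cells of the other columns. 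Second, I would check that the moves in (\ref{EQ: UP}) are legal as $K$-Kohnert moves via Remark~\ref{R: K-Kohnert move}: one must confirm that in each column being operated on there are no ghosts in the path and no cells to the right in the relevant rows. This is where the left-to-right order $c_1<\dots<c_m$ matters — ghosts are only ever deposited in columns $c_1,\dots,c_{i-1}$, all strictly to the left of the column currently being moved, so they never obstruct. Third, I would close the induction by comparing the row-by-row cell counts: because $\dark(D(\alpha))$ is a non-attacking rook diagram, the dark cloud $(r_i,c_i)$ is the unique dark cloud in row $r_i$, so each row $r$ of $D(\alpha)$ gets processed by at most one iteration, and the final diagram has, in row $r$, a number of cells equal to $\wt(\snow(D(\alpha)))_r = \rajcode(\alpha)_r$; matching the column positions then forces the underlying diagrams to coincide.

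\textbf{Main obstacle.} The hard part is the bookkeeping in the inductive step: one must show that moving the cells of row $r_i$ in columns $c_i,c_i+1,\dots,\alpha_{r_i}$ upward lands them exactly where the snow algorithm places the dark cloud and snowflakes of column $c_i$, \emph{and} that this does not disturb the already-stabilized columns $<c_i$ nor prematurely alter columns $>c_i$. The subtlety is that when we lift the cell in column $c_i$ to the ``highest empty spot,'' we need this spot to be exactly the row of the dark cloud $(r_i,c_i)$ in $\snow(D(\alpha))$, and the empty spaces it jumps over (which receive ghosts) to be exactly the snowflake positions $(r',c_i)$ for $r'<r_i$, $(r',c_i)\notin D(\alpha)$; similarly the cells swept out of columns $c_i+1,\dots,\alpha_{r_i}$ must refill the ``holes'' left in those columns by earlier iterations in a way consistent with the snow construction. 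I expect this to require a careful lemma describing, for each column $c$, the set of rows occupied in $\snow(D(\alpha))$ in terms of the original column heights and the positions of dark clouds to the left, and then matching that description against the cumulative effect of the $UP$ and $UP^G$ operations. Once that column-by-column correspondence is nailed down, the rest is routine.
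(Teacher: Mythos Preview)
Your proposed invariant --- that after iteration $i$ the diagram $D^i$ agrees with $D(\alpha)$ in all columns $>c_i$ --- is false. In the paper's running example $\alpha=(1,3,4,0,4,3)$, iteration $1$ processes the dark cloud $(r_1,c_1)=(2,1)$, and the composite~(\ref{EQ: UP}) applies $UP_{(2,3)}$, $UP_{(2,2)}$, then $UP^G_{(2,1)}$; the first two already move cells in columns $2$ and $3$, so column $2$ of $D^1$ occupies rows $\{1,3,5,6\}$, not the rows $\{2,3,5,6\}$ of $D(\alpha)$. You seem to sense this in your ``main obstacle'' paragraph (cells refilling holes left by earlier iterations), but that remark is incompatible with the invariant as stated, and without a correct description of the state of columns $>c_i$ the inductive step cannot be completed.

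The paper avoids this by arguing column-by-column rather than iteration-by-iteration. For a fixed column $c$: if $c$ carries no dark cloud, then no iteration ever operates on it (a cell $(r_i,c)\in D(\alpha)$ with column $c$ dark-cloud-free forces, by Remark~\ref{R: Basic of dark}, the dark cloud in row $r_i$ to lie to the right of $c$, i.e.\ $c_i>c$, so iteration $i$ never reaches column $c$). If the dark cloud sits at $(r_j,c)$, one checks that every earlier iteration $i<j$ touching column $c$ has $r_i<r_j$, so those moves only rearrange cells strictly above row $r_j$; then $UP^G_{(r_j,c)}$ fills every empty spot in rows $1,\dots,r_j$, which is exactly column $c$ of $\snow(D(\alpha))$. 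No tracking of intermediate $D^i$ is needed.

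A minor point: your second step (verifying the moves are valid $K$-Kohnert moves) is not part of this lemma at all; the paper proves that separately in Proposition~\ref{P: Top Las 1}, using a different auxiliary fact (left-justification of $D^{i-1}$ beyond column $c_{i-1}$).
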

\begin{proof}
For a number $c$, 
we compare the column $c$ 
of $\snow(D(\alpha))$ and $D^m$.
If column $c$ of $\snow(D(\alpha))$ has no dark cloud,
then it is the same as column $c$ of $D(\alpha)$.
In this case, the algorithm will not move any cells 
in column $c$.
Thus, $D^m$ and $D(\alpha)$ also agree in column $c$.

Now suppose $\snow(D(\alpha))$ has a dark cloud
in column $c$, say at row $r$.
In the underlying diagram of $\snow(D(\alpha))$,
column $c$ is obtained from column $c$ of $D(\alpha)$ 
by filling all empty spaces above row $r$.  
On the other hand, consider what the algorithm does on column $c$.
It first might move cells above row $r$
and then it fills all empty spaces weakly above row $r$.
Thus, column $c$ in $D^m$ is the same as 
column $c$ of $\snow(D(\alpha))$ after ignoring the labels.  
\end{proof}

Next, we want to show $D^m$ produced by the algorithm
is in $\KKD(\alpha)$.
We just need to check each $UP_{(r,j)}$ and $UP^G_{(r,c)}$ in each iteration is a sequence of $K$-Kohnert moves. To that end, we first make the following observation about the diagram $D^i$.

\begin{lemma}
Let $c_0 = 0$. In $D^i$,
if a cell is strictly to the right of column $c_i$,
then there is a cell immediately on its left.
In other words, 
the diagram $D^i$ is left-justified if we ignore the first $c_i$ columns.
\end{lemma}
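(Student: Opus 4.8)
The plan is to prove this by induction on $i$, tracking how the algorithm modifies the diagram. For the base case $i = 0$, we have $c_0 = 0$ and $D^0 = D(\alpha)$, which is left-justified by definition of a key diagram, so the claim holds vacuously (every cell strictly right of column $0$ has a cell on its left). For the inductive step, assume the statement holds for $D^{i-1}$: ignoring the first $c_{i-1}$ columns, $D^{i-1}$ is left-justified. We must show that after applying the composition of $UP$ and $UP^G$ moves in row $r_i$ described in~(\ref{EQ: UP}), the resulting diagram $D^i$ is left-justified once we ignore the first $c_i$ columns.

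First I would observe that the moves in iteration $i$ only affect columns $c_i, c_i + 1, \dots, \alpha_{r_i}$, and they all originate from cells in row $r_i$. The cells in columns $c_i$ through $\alpha_{r_i}$ of row $r_i$ of $D^{i-1}$ form a contiguous block: by induction $D^{i-1}$ is left-justified past column $c_{i-1} \leq c_i - 1$, and one needs to check that row $r_i$ of $D^{i-1}$ still has exactly the cells in columns $1, \dots, \alpha_{r_i}$ — i.e., that the earlier iterations did not remove cells from row $r_i$ or add cells beyond column $\alpha_{r_i}$ there. Earlier iterations move cells \emph{up} out of rows $r_1, \dots, r_{i-1}$; since the dark clouds $(r_j, c_j)$ are determined by $\snow(D(\alpha))$ and $r_i$ is the row of a dark cloud, I would argue $r_i \notin \{r_1, \dots, r_{i-1}\}$ (dark clouds lie in distinct rows as $\dark(D) \in \Rook_+$ by Remark~\ref{R: Basic of dark}), and that cells landing in row $r_i$ from below during earlier iterations can only land in columns $\leq c_{i-1} < c_i$, hence do not disturb the block from $c_i$ onward. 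Then the moves $UP_{(r_i, \alpha_{r_i})}, \dots, UP_{(r_i, c_i+1)}, UP^G_{(r_i, c_i)}$ lift these cells out of row $r_i$ into empty spots above, leaving row $r_i$ with exactly columns $1, \dots, c_i - 1$ (plus possibly a ghost at $(r_i, c_i)$).

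The key point is then: after these lifts, is the diagram left-justified past column $c_i$? The cells lifted from $(r_i, c_i+1), \dots, (r_i, \alpha_{r_i})$ go to the highest empty space in their respective columns. For columns $c > c_i$, since $D^{i-1}$ was left-justified past $c_{i-1}$, every column $c$ in the range $c_i+1, \dots, \alpha_{r_i}$ already had cells in all rows below $r_i$ that column $c-1$ had below $r_i$ (roughly), so lifting a cell in column $c$ upward keeps the "column $c$ has no more cells than column $c-1$ above any given row" invariant. I would formalize this by checking that the profile of column heights is weakly decreasing from column $c_i+1$ onward both before and after the lifts, which is equivalent to left-justification in the relevant range; the lift in column $c$ fills the topmost hole, and because column $c-1$ was at least as tall, that hole in column $c$ sits at or below the topmost hole of column $c-1$, preserving the order. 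The ghost placement in $UP^G_{(r_i,c_i)}$ only affects column $c_i$, which we are ignoring, so it is harmless for the claim.

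The main obstacle I anticipate is the bookkeeping in the inductive step: precisely verifying that prior iterations do not pollute row $r_i$ in columns $\geq c_i$ and that the sequence of single-column $UP$ moves — performed right-to-left — interacts correctly, since each move changes the empty-space structure seen by the (already-completed) move to its right. I would handle this by noting the moves are in disjoint columns, so they commute as set operations on the diagram, and the only coupling is through the shared row $r_i$, which each move only empties (never fills in a column $> c_i$). A careful statement of the column-height profile invariant, proved to be preserved by a single $UP_{(r_i,c)}$ move, should make this routine; this invariant is the technical heart and is worth isolating as a sub-claim before assembling the induction.
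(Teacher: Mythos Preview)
Your induction on $i$ and the core observation---that left-justification of $D^{i-1}$ past column $c_{i-1}$ forces the topmost blank in column $c-1$ to sit weakly below the topmost blank in column $c$---match the paper exactly. The difference is packaging. The paper does not track row $r_i$ or column-height profiles at all; it simply fixes a cell $(r,c)\in D^i$ with $c>c_i$ and checks directly that $(r,c-1)\in D^i$, splitting on whether $(r,c)$ was already in $D^{i-1}$ (if not, it is the topmost blank of column $c$, and the inductive hypothesis gives $(r,c-1)$ filled; if so, either $r\neq r_i$ and nothing in row $r$ moved, or $r=r_i$ and the fact that the cell survived iteration $i$ means there was no empty space above it, whence by induction none above $(r,c-1)$ either). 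This two-case argument is short and sidesteps all of the bookkeeping you flag as the main obstacle.

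Your extra discussion of whether earlier iterations disturb row $r_i$ in columns $\geq c_i$ is not needed for the lemma as stated---the lemma is purely about left-justification---though it is relevant to the well-definedness of the $UP$ moves, which the paper leaves implicit. Your specific claim that such landings occur only in columns $\leq c_{i-1}$ is stated without proof and is not quite the right bound; the correct fact (which does hold, via the dark-cloud structure: if $r_j>r_i$ and $c_j<c_i$ then $\alpha_{r_j}<c_i$) is that earlier iterations touch only columns $<c_i$ when they could reach row $r_i$, so row $r_i$ retains exactly its original cells $1,\dots,\alpha_{r_i}$. If you want to keep your framing, isolate and prove that fact first; otherwise, the paper's direct verification is cleaner and avoids the issue entirely.
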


\begin{proof}
Prove by induction on $i$.
The lemma holds for $D^0$, which is left-justified.

Assume $D^{i-1}$ is left-justified if we ignore the first $c_{i-1}$ columns,
for some $i \geq 1$.
Consider an arbitrary cell $(r,c)$ in $D^i$
with $c > c_i$.
We show $(r, c-1)$ is in $D^i$ by considering two possibilities.
\begin{itemize}
\item[-] The cell $(r,c)$ is not in $D^{i-1}$.
Then during iteration $i$, a cell is moved to $(r,c)$,
which is the highest blank in column $c$ of $D^{i-1}$.
By our inductive hypothesis and $c-1 > c_{i-1}$, 
the highest blank in column $c-1$ of $D^{i-1}$
is weakly lower than row $r$.
Thus, $(r,c-1)$ is in $D^i$.
\item[-] Otherwise, $(r,c)$ is in $D^{i-1}$.
By our inductive hypothesis, 
$(r, c-1)$ is in $D^{i-1}$.
If $r \neq r_i$,
then we know that no cell from row $r$ is moved 
during iteration $i$.
Thus, $(r,c-1)$ is still in $D^i$.
If $r = r_i$,
then there are no empty spaces above $(r,c)$
in $D^{i-1}$.
By our inductive hypothesis, 
there is no empty spaces above $(r,c-1)$,
so $(r,c-1)$ is still in $D^i$. \qedhere
\end{itemize}
\end{proof}

The above lemma shows that the diagram $D^i$ is left-justified if we ignore the first $c_i$ columns.
We will use this property to show that $D^m$ 
is in $\KKD(\alpha)$.
\begin{prop}\label{P: Top Las 1}
The above algorithm can be achieved by $K$-Kohnert moves,
so $D^m \in \KKD(\alpha)$.
\end{prop}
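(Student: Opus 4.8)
The plan is to verify, iteration by iteration, that every application of $UP_{(r_i,j)}$ and the final $UP^G_{(r_i,c_i)}$ in the composition~(\ref{EQ: UP}) meets the two conditions of Remark~\ref{R: K-Kohnert move}, so that each is realized by a legal sequence of $K$-Kohnert moves on the current ghost diagram. Fix $i$ and set $r = r_i$. During iteration $i$ we process the cells in row $r$ from the rightmost column $\alpha_r$ leftward down to column $c_i$; note that because $(r_i,c_i)$ is a dark cloud, Remark~\ref{R: Basic of dark} (together with the construction of $\dark$) guarantees $\alpha_r \geq c_i$ and that columns $c_i+1, \dots, \alpha_r$ of row $r$ are all occupied in $D(\alpha)$, hence still occupied in $D^{i-1}$ since earlier iterations only touched rows $r_1, \dots, r_{i-1}$ and only moved cells \emph{upward} out of those rows. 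So each $UP_{(r,j)}$ with $c_i < j \leq \alpha_r$ really is being applied to a genuine (non-ghost) cell of the current diagram, and after it runs we are free to continue with the next one to its left.

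The core of the argument is checking the two bullet conditions of Remark~\ref{R: K-Kohnert move} for the cell being lifted out of position $(r,j)$. The second bullet — that no cell strictly between the source row $r$ and the target row $r'$ in column $j$ is a ghost — follows from tracking where ghosts are created: ghosts only appear via the $UP^G$ step at the \emph{end} of iterations $i' < i$, and those ghosts sit in columns $c_{i'} < c_i \leq j$ weakly to the left of $j$, never in column $j$ itself; within iteration $i$ the plain $UP$ moves create no ghosts. For the first bullet — that whenever $(k,j) \notin D^{i-1}$ (or more precisely the partially-updated diagram within iteration $i$) for some $r' < k \leq r$, there is no cell of that diagram to the right of column $j$ in row $k$ — this is exactly where the left-justification lemma does its work: after ignoring the first $c_{i-1} \geq $ (wait, we need $c_i$) columns the diagram $D^{i-1}$ is left-justified, and since $j > c_i \geq c_{i-1}$... here one must be slightly careful and instead argue within iteration $i$: having already lifted the cells originally in columns $j+1, \dots, \alpha_r$ of row $r$, the diagram to the right of column $j$ is left-justified beyond column $c_i$, so an empty slot at $(k,j)$ with $k < r$ forces emptiness of $(k,j')$ for all $j' > j$ in that row. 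I would isolate this as a short sub-claim and prove it by downward induction on $j$ from $\alpha_r$ to $c_i$, feeding off the already-established left-justification lemma applied to $D^{i-1}$.

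Finally, for the terminal move $UP^G_{(r_i,c_i)}$: after the preceding plain $UP$'s, row $r_i$ has a single remaining cell, in column $c_i$; lifting it to the highest empty slot of column $c_i$ and depositing ghosts at $(r_i,c_i)$ and every empty space passed is a $K$-Kohnert ghost move provided the same two conditions hold, which we check by the same reasoning (column $c_i$ contains no ghost below row $r_i$ since earlier ghosts live strictly to the left, and the left-justification beyond column $c_i$ handles empty slots above). Assembling these checks over $i = 1, \dots, m$ shows $D^m$ is reachable from $D(\alpha)$ by $K$-Kohnert moves, i.e.\ $D^m \in \KKD(\alpha)$. The main obstacle I anticipate is bookkeeping the precise state of the diagram \emph{in the middle} of iteration $i$ — after some but not all of the $UP_{(r_i,\cdot)}$ have fired — since the left-justification lemma as stated only describes the diagrams $D^i$ at iteration boundaries; the fix is to prove a slightly strengthened invariant that also covers these intermediate configurations within a single iteration, which I would state and prove as a lemma before invoking Remark~\ref{R: K-Kohnert move}.
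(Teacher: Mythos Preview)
Your proposal is correct and follows essentially the same approach as the paper's proof: verify iteration by iteration that each $UP$ and $UP^G$ satisfies the hypotheses of Remark~\ref{R: K-Kohnert move}, using the left-justification lemma for $D^{i-1}$ to control what lies to the right of the column being processed, and using the fact that all earlier ghosts live in columns $c_{i'} < c_i$ to rule out jumping over ghosts. The intermediate-state bookkeeping you worry about is exactly the subtle point; the paper dispatches it in one line by observing that, because $D^{i-1}$ is left-justified past column $c_{i-1}$, the highest empty spaces in columns $c_i,\dots,\alpha_{r_i}$ go weakly up from left to right, so processing the $UP$'s from right to left keeps the current cell rightmost in row $r_i$ and keeps the first bullet of Remark~\ref{R: K-Kohnert move} satisfied at each step---this is the concrete form of your proposed ``slightly strengthened invariant.'' (One small inaccuracy: after the plain $UP$'s, row $r_i$ need not have a \emph{single} remaining cell---columns $1,\dots,c_i$ may still be occupied---but what matters, and what holds, is that $(r_i,c_i)$ is the rightmost cell in that row.)
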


\begin{proof}
We focus on one iteration of the algorithm,
say iteration $i$.
We check the operators 
in~(\ref{EQ: UP}) can be achieved by $K$-Kohnert moves. 
We ignore all cells to the left of the column $c_i$ in $D^{i-1}$.
By the previous Lemma, this part of the diagram 
is left-justified.
The highest empty spaces in columns $c_i, \cdots, \alpha_{r_i}$
are going weakly up from left to right.
Moreover, the condition in Remark~\ref{R: K-Kohnert move} 
holds for all $(r_i, c_i), \cdots, (r_i, \alpha_{r_i})$.

Now $UP_{(r_i, \alpha_{r_i})}$
can be achieved by $K$-Kohnert moves.
After that, the conditions in Remark~\ref{R: K-Kohnert move} 
hold at each step for $(r_i, \alpha_{r_i} - 1), \dots, (r_i, c_i)$.
Following this logic, this iteration can be achieved by $K$-Kohnert moves. 
\end{proof}

Using Theorem~\ref{thm: k.kohnert}:

\topTermlem*

\subsection{Proof of Lemma~\ref{L: snowy weak composition in equivalence class}}
\label{SS: equivalence}

First, notice that we can recover the underlying diagram of $\snow(D(\alpha))$
from $\dark(\alpha)$.

\begin{lemma}
\label{L: dark recovers snow}
Let $\alpha$ be a weak composition. 
The underlying diagram of $\snow(D(\alpha))$
is:
\begin{align}
\label{Eq: snow recovered by dark}
\bigcup_{(r,c) \in \dark(\alpha)} ([r] \times \{c\}) \cup (\{r\} \times [c]).
\end{align}
\end{lemma}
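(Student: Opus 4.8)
The plan is to prove the claim by a two-way inclusion, using the two bullet points in Remark~\ref{R: Basic of dark} as the main tools. Write $E$ for the set in \eqref{Eq: snow recovered by dark} and $S$ for the underlying diagram of $\snow(D(\alpha))$; I want to show $E = S$.

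\textbf{The inclusion $E \subseteq S$.} Fix $(r_0,c_0)\in\dark(\alpha)$, so $(r_0,c_0)$ is a dark cloud in $\snow(D(\alpha))$. I claim $[r_0]\times\{c_0\}$ and $\{r_0\}\times[c_0]$ both lie in $S$. For the vertical leg: by the snow algorithm, when row $r_0$ is processed the cell $(r_0,c_0)$ is in $D(\alpha)$ (dark clouds sit on cells of $D$), and the algorithm immediately places a snowflake at every $(r',c_0)$ with $r'<r_0$ and $(r',c_0)\notin D(\alpha)$; the remaining such cells $(r',c_0)$ with $r'< r_0$ that \emph{are} in $D(\alpha)$ are of course already in $S$. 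Hence $[r_0]\times\{c_0\}\subseteq S$. For the horizontal leg: since $D(\alpha)$ is the left-justified key diagram, $(r_0,c_0)\in D(\alpha)$ forces $(r_0,c)\in D(\alpha)\subseteq S$ for all $c\le c_0$. So $\{r_0\}\times[c_0]\subseteq S$, giving $E\subseteq S$.

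\textbf{The inclusion $S \subseteq E$.} Take $(r,c)\in S$; it is either a cell of $D(\alpha)$, a dark cloud, or a snowflake. If it is a dark cloud, it lies in $E$ trivially (take itself as the generator). If $(r,c)$ is a snowflake, then by the algorithm it was created while processing some row $r' > r$ that contributed a dark cloud in column $c$, i.e. $(r',c)\in\dark(\alpha)$ with $r<r'$, so $(r,c)\in[r']\times\{c\}\subseteq E$. The remaining case is $(r,c)\in D(\alpha)$ with $(r,c)$ not a dark cloud. Here I invoke the second bullet of Remark~\ref{R: Basic of dark} in contrapositive form: since $(r,c)\in D(\alpha)$ is not in $\dark(\alpha)$, there must be either some $r'>r$ with $(r',c)\in\dark(\alpha)$ or some $c'>c$ with $(r,c')\in\dark(\alpha)$. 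In the first case $(r,c)\in[r']\times\{c\}\subseteq E$; in the second case $(r,c)\in\{r\}\times[c']\subseteq E$. This exhausts all cases, so $S\subseteq E$.

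Combining the two inclusions gives $S = E$, which is the assertion. The only subtlety — and the step I would state most carefully — is the snowflake case in $S\subseteq E$: one must be sure that every snowflake in $\snow(D(\alpha))$ records a genuine dark cloud \emph{below} it in the same column, which is exactly how snowflakes are produced in the algorithm (the snowflakes placed when processing row $r$ sit strictly above $r$ in the dark cloud's column). I do not anticipate a real obstacle here; the argument is a direct unwinding of the definitions of $\snow(\cdot)$ and $\dark(\cdot)$ together with left-justification of $D(\alpha)$ and Remark~\ref{R: Basic of dark}.
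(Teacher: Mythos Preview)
Your proof is correct and follows essentially the same approach as the paper's: both directions use left-justification of $D(\alpha)$ for the horizontal leg, the snow algorithm for the vertical leg, and Remark~\ref{R: Basic of dark} for the reverse inclusion. The only cosmetic difference is that the paper phrases the inclusion $S\subseteq E$ as a proof by contradiction, while you give a direct case analysis; the underlying logic is identical.
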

\begin{proof}
First, we show that the elements of the set~(\ref{Eq: snow recovered by dark})
are cells in $\snow(D(\alpha))$.
Take $(r,c) \in \dark(\alpha)$.
We know $(r,c) \in D(\alpha)$.
Since $D(\alpha)$ is left-justified, 
$\{r\} \times [c] \subseteq D(\alpha)$.
Thus, these cells are in $\snow(D(\alpha))$.
By the construction of $\snow(D(\alpha))$,
the cells in $[r] \times \{c\}$ are also 
in $\snow(D(\alpha))$.

Now suppose there is a cell $(r, c)$ 
in $\snow(D(\alpha))$
that is not in the 
set~(\ref{Eq: snow recovered by dark}).
Then there is no $r' > r$ with $(r', c) \in \dark(D)$,
which implies $(r,c)$ is not a snowflake 
in $\snow(D(\alpha))$.
Thus, $(r,c) \in D(\alpha)$.
Also, there is no $c' > c$ with $(r, c') \in \dark(D)$.
By Remark~\ref{R: Basic of dark},
$(r,c) \in \dark(D)$.
Thus, $(r, c)$ is in the 
set~(\ref{Eq: snow recovered by dark}),
which is a contradiction.
\end{proof}

Furthermore, 
we can recover $\dark(\alpha)$ from $\rajcode(\alpha)$.

\begin{lemma}
\label{L: same rajcode => same dark}
Let $\alpha, \gamma$ be weak compositions.
If $\rajcode(\alpha) = \rajcode(\gamma)$,
then $\dark(\alpha) = \dark(\gamma)$.
\end{lemma}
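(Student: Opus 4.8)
The plan is to prove the stronger statement that $\rajcode(\alpha)$ determines $\dark(\alpha)$ completely, by giving an explicit reconstruction of the dark cloud diagram from $\rajcode(\alpha)$, processed one row at a time from the bottom upward. The lemma follows at once: if $\rajcode(\alpha)=\rajcode(\gamma)$, the reconstruction produces the same output, so $\dark(\alpha)=\dark(\gamma)$.

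First I would extract from Lemma~\ref{L: dark recovers snow} the following pointwise membership test: a cell $(i,j)$ lies in the underlying diagram of $\snow(D(\alpha))$ if and only if either column $j$ carries a dark cloud in some row $\geqslant i$, or row $i$ carries a dark cloud in some column $\geqslant j$. By Remark~\ref{R: Basic of dark}, $\dark(\alpha)\in\Rook_+$, so each row and each column contains at most one dark cloud. Writing $S_i:=\{\,c : (r,c)\in\dark(\alpha)\text{ for some }r>i\,\}$ for the columns of dark clouds strictly below row $i$, I would compute the row count $\rajcode(\alpha)_i=\wt(\snow(D(\alpha)))_i$ in two cases. If row $i$ has no dark cloud, then the occupied columns in row $i$ are exactly $S_i$, so $\rajcode(\alpha)_i=|S_i|$. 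If row $i$ has a dark cloud at column $c$, then the occupied columns in row $i$ are exactly $\{1,\dots,c\}\cup S_i$, giving $\rajcode(\alpha)_i=c+|\{c'\in S_i : c'>c\}|$.

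The crux is to show these formulas let us read off the status and column of row $i$ from $\rajcode(\alpha)_i$ and $S_i$ alone. Put $\psi(c):=c+|\{c'\in S_i : c'>c\}|$ for $c\in\Z_{>0}\setminus S_i$. Two elementary facts suffice. First, since $c\notin S_i$ we have $|S_i|=|\{c'\in S_i : c'<c\}|+|\{c'\in S_i : c'>c\}|$, whence $\psi(c)-|S_i|=c-|\{c'\in S_i : c'<c\}|\geqslant c-(c-1)=1$; thus a dark-cloud row's count always strictly exceeds $|S_i|$, so it can never be confused with the empty-row value $|S_i|$. Second, $\psi$ is strictly increasing on $\Z_{>0}\setminus S_i$: for $c_1<c_2$ outside $S_i$ one gets $\psi(c_2)-\psi(c_1)=(c_2-c_1)-|\{c'\in S_i : c_1<c'<c_2\}|\geqslant (c_2-c_1)-(c_2-c_1-1)=1$. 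Consequently $\rajcode(\alpha)_i=|S_i|$ holds exactly when row $i$ has no dark cloud, and otherwise the column $c$ is the unique solution of $\psi(c)=\rajcode(\alpha)_i$.

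Finally I would run a downward induction on $i$. For $i$ larger than every row index occurring in either $\dark(\alpha)$ or $\dark(\gamma)$, both diagrams are empty above the threshold, so they agree there and the corresponding sets $S_i$ coincide. For the inductive step, assuming $\dark(\alpha)$ and $\dark(\gamma)$ agree in all rows $>i$, their sets $S_i$ are equal; since $\rajcode(\alpha)_i=\rajcode(\gamma)_i$, the previous paragraph forces the same dark-cloud status in row $i$ and, when present, the same column. Hence the diagrams agree in row $i$, completing the induction and yielding $\dark(\alpha)=\dark(\gamma)$. I expect the main obstacle to be the separation-and-monotonicity facts for $\psi$ in the third paragraph — in particular verifying that a dark-cloud row's count is never equal to $|S_i|$ — since the membership description and the downward induction are then routine bookkeeping.
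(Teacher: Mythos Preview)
Your proposal is correct and follows essentially the same approach as the paper: both proofs reconstruct $\dark(\alpha)$ from $\rajcode(\alpha)$ by a downward induction on the row index, using that the row count is $|S_i|$ when row $i$ has no dark cloud and otherwise determines the column uniquely. The paper phrases the dark-cloud case as $\rajcode(\alpha)_r = i + |\overline{B_r}|$ where $c$ is the $i$\textsuperscript{th} smallest element of $B_r=\Z_{>0}\setminus S_r$, which is exactly your $\psi(c)$ rewritten via rank; your explicit verification that $\psi$ is strictly increasing and always exceeds $|S_i|$ makes the uniqueness step a bit more transparent than the paper's terse ``Consequently, \dots uniquely determines row $r$.''
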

\begin{proof}
We prove the two diagrams $\dark(\alpha)$ 
and $\dark(\gamma)$ agree on each row $r$,
by a reverse induction on $r$.
The base case is immediate.
Suppose $r$ is large enough
such that $\alpha_i = \gamma_i = 0$
if $i > r$.
Then $\dark(\alpha)$ and $\dark(\gamma)$
clearly agree on row $r$ and underneath.

Next, we show that the value $\rajcode(\alpha)_r$ and cells in $\dark(\alpha)$ under row $r$ 
determines whether $\dark(\alpha)$ has a cell on row $r$.
Moreover, if such a cell exists, its column index is also determined. 

Let $r\geqslant 1$.
Define
$$B_r :=\{c: \textrm{There are no dark clouds under } (r,c) \textrm{ in } \snow(D(\alpha) \}.$$

The complement of $B_r$ is $\overline{B_r} := \mathbb{Z}_{>0} - B_r = \{c: (r',c) \in \dark(\alpha) \textrm{ for some } r' > r \}$.
For $c \in \overline{B_r}$,
$(r,c)$ of $\snow(D(\alpha))$ is a snowflake or an unlabeled cell.
If there is no dark cloud on row $r$ of $\snow(D(\alpha))$,
$\rajcode(\alpha)_r = |\overline{B_r}|$.
Otherwise, we assume the dark cloud is at $(r,c)$ for some $c \in B_r$.
Then row $r$ of $\snow(D(\alpha))$ has cells on 
$(r,c')$ for $c' \in \overline{B_r}$ or $c' \leq c$.
Suppose $c$ is the $i$\textsuperscript{th} smallest number in $B_r$.
We have $\rajcode(\alpha)_r = i + |\overline{B_r}|$.

Consequently, $\rajcode(\alpha)_r$ and $\dark(\alpha)$ under row $r$
uniquely determines row $r$ of $\dark(\alpha)$.
If we assume $\dark(\alpha)$ and $\dark(\gamma)$
agree underneath row $r$ as our inductive hypothesis,
then they also agree on row $r$ since $\rajcode(\alpha)_r = \rajcode(\gamma)_r$.
The induction is finished. 
\end{proof}

Now we have two equivalent ways of describing rajcode equivalence.
\begin{prop}
\label{P: equivalence}
Let $\alpha$ and $\gamma$ be two weak compositions. 
The following are equivalent:
\begin{enumerate}
\item $\alpha \sim \gamma$;
\item $\dark(\alpha) = \dark(\gamma)$.
\item The underlying diagrams of $\snow(D(\alpha))$ and  $\snow(D(\gamma))$
are the same;
\end{enumerate}
\end{prop}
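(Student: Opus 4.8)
The plan is to show the cycle of implications $(1)\Rightarrow(2)\Rightarrow(3)\Rightarrow(1)$, most of which is already assembled from the preceding lemmas. The implication $(1)\Rightarrow(2)$ is precisely Lemma~\ref{L: same rajcode => same dark}: if $\rajcode(\alpha)=\rajcode(\gamma)$, then $\dark(\alpha)=\dark(\gamma)$. The implication $(2)\Rightarrow(3)$ is Lemma~\ref{L: dark recovers snow}: the underlying diagram of $\snow(D(\alpha))$ is the explicit set in~(\ref{Eq: snow recovered by dark}) built solely from $\dark(\alpha)$, so equal dark cloud diagrams force equal underlying snow diagrams. Thus only $(3)\Rightarrow(1)$ remains, and this is immediate: if the underlying diagrams of $\snow(D(\alpha))$ and $\snow(D(\gamma))$ coincide, then in particular they have the same weight, i.e. $\wt(\snow(D(\alpha)))=\wt(\snow(D(\gamma)))$, which by Definition~\ref{defn.raj} says $\rajcode(\alpha)=\rajcode(\gamma)$, that is, $\alpha\sim\gamma$.

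Concretely, I would write: ``By Lemma~\ref{L: same rajcode => same dark}, $(1)$ implies $(2)$. By Lemma~\ref{L: dark recovers snow}, the underlying diagram of $\snow(D(\alpha))$ depends only on $\dark(\alpha)$, so $(2)$ implies $(3)$. Finally, $\rajcode(\alpha) = \wt(\snow(D(\alpha)))$ by Definition~\ref{defn.raj}, so if the underlying diagrams of $\snow(D(\alpha))$ and $\snow(D(\gamma))$ agree then so do their weights, giving $\rajcode(\alpha)=\rajcode(\gamma)$; hence $(3)$ implies $(1)$.'' Since the three implications close the loop through $(1)$, all three statements are equivalent.

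There is no real obstacle here: the proposition is a bookkeeping corollary of the two lemmas just proved, plus the trivial observation that the weight of a diagram is determined by the diagram. The only thing to be slightly careful about is the direction of each lemma — Lemma~\ref{L: same rajcode => same dark} goes from equal rajcodes to equal dark diagrams (used for $(1)\Rightarrow(2)$), while Lemma~\ref{L: dark recovers snow} expresses the snow diagram in terms of the dark diagram (used for $(2)\Rightarrow(3)$) — and to make sure the final step $(3)\Rightarrow(1)$ genuinely only uses that weight is a function of the underlying diagram, which it is by definition. So the write-up is three short sentences.
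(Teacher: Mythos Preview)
Your proof is correct and follows exactly the paper's approach: the cycle $(1)\Rightarrow(2)\Rightarrow(3)\Rightarrow(1)$ via Lemma~\ref{L: same rajcode => same dark}, Lemma~\ref{L: dark recovers snow}, and the definition of $\rajcode$ as the weight of the snow diagram. If anything, you spell out the last implication more carefully than the paper, which simply says ``Clearly, (3) implies (1).''
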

\begin{proof}
By Lemma~\ref{L: same rajcode => same dark}, (1) implies (2).
By Lemma~\ref{L: dark recovers snow}, (2) implies (3).
Clearly, (3) implies (1).
\end{proof}

Our next goal is to find representatives
of rajcode equivalence classes. 
At the end of this subsection, 
we will see snowy weak compositions form
a complete set of representatives. 
To understand snowy weak compositions, 
we start with the following observation. 

\begin{rem}
\label{R: snowy}
For a weak composition $\alpha$,
the following are equivalent:
\begin{itemize}
\item $\alpha$ is snowy.
\item The rightmost cell in each row of $D(\alpha)$
are in different columns. 
\item The rightmost cell in each row of $D(\alpha)$
is a dark cloud in $\snow(D(\alpha))$. 
\end{itemize}
\end{rem}

One advantage of working with snowy weak compositions
is that 
we can tell their 
$\rajcode(\cdot)$ and $\raj(\cdot)$ easily:
\begin{lemma}
\label{L: dark, rajcode and raj of snowy weak composition}
Let $\alpha$ be a snowy weak composition.
Then the following statements hold.
\begin{enumerate}
\item $\dark(\alpha) = \{(r,\alpha_r): \alpha_r > 0\}$,\label{snowy.weak.1}
\item $\rajcode(\alpha)_r = \alpha_r + |\{r' > r: \alpha_r < \alpha_r'\}|$, and \label{snowy.weak.2}
\item $\raj(\alpha) = \sum_r (\alpha_r + |\{(r, r'): \alpha_r < \alpha_r', r < r'\}|) = |\alpha| + |\{(r, r'): r < r', \alpha_r < \alpha_{r'}\}|$.\label{snowy.weak.3}
\end{enumerate}
\end{lemma}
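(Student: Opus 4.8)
The plan is to prove the three items in sequence, feeding \eqref{snowy.weak.1} into Lemma~\ref{L: dark recovers snow} to obtain \eqref{snowy.weak.2}, and then deducing \eqref{snowy.weak.3} from \eqref{snowy.weak.2} by a summation. For \eqref{snowy.weak.1}, I would argue as follows. Since $\alpha$ is snowy, the third characterization in Remark~\ref{R: snowy} says the rightmost cell of every nonempty row of $D(\alpha)$ is a dark cloud of $\snow(D(\alpha))$; as $D(\alpha)$ is left-justified, that rightmost cell is $(r,\alpha_r)$ whenever $\alpha_r>0$, so $\{(r,\alpha_r):\alpha_r>0\}\subseteq\dark(\alpha)$. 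Conversely, $\dark(\alpha)\in\Rook_+$ by Remark~\ref{R: Basic of dark}, so each row carries at most one dark cloud, and every dark cloud is a cell of $D(\alpha)$; hence a row with $\alpha_r=0$ carries none and a row with $\alpha_r>0$ carries only $(r,\alpha_r)$, giving equality. (One could alternatively prove this directly by reverse induction on $r$ from the snow-diagram construction, using distinctness of the positive parts to see column $\alpha_r$ is free of dark clouds when row $r$ is processed.)

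For \eqref{snowy.weak.2}, combining \eqref{snowy.weak.1} with Lemma~\ref{L: dark recovers snow} shows that the underlying diagram of $\snow(D(\alpha))$ is
\[
\bigcup_{s\,:\,\alpha_s>0}\Bigl(\bigl([s]\times\{\alpha_s\}\bigr)\cup\bigl(\{s\}\times[\alpha_s]\bigr)\Bigr),
\]
and $\rajcode(\alpha)_r$ is, by Definition~\ref{defn.raj}, the number of cells of this set in row $r$. Intersecting each piece with row $r$: the piece $\{s\}\times[\alpha_s]$ meets row $r$ only for $s=r$, contributing the columns $1,\dots,\alpha_r$; the piece $[s]\times\{\alpha_s\}$ meets row $r$ exactly for $s\ge r$, contributing the single column $\alpha_s$. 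For $s=r$ that column already lies among $1,\dots,\alpha_r$; for $s>r$ it is a new cell precisely when $\alpha_s>\alpha_r$, since the snowy hypothesis forbids $\alpha_s=\alpha_r$ while $0<\alpha_s<\alpha_r$ would place $\alpha_s$ in $\{1,\dots,\alpha_r\}$ (the degenerate case $\alpha_r=0$ falls under the same count, with an empty first block). Adding up yields $\rajcode(\alpha)_r=\alpha_r+|\{r'>r:\alpha_r<\alpha_{r'}\}|$.

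For \eqref{snowy.weak.3}, I would simply sum \eqref{snowy.weak.2} over $r$: $\raj(\alpha)=|\rajcode(\alpha)|=\sum_r\alpha_r+\sum_r|\{r'>r:\alpha_r<\alpha_{r'}\}|$, and the double count $\sum_r|\{r'>r:\alpha_r<\alpha_{r'}\}|$ is exactly $|\{(r,r'):r<r',\ \alpha_r<\alpha_{r'}\}|$, which gives both displayed forms. Everything here is short; the only step needing genuine care is the overlap count in \eqref{snowy.weak.2}, namely checking that the column $\alpha_s$ contributed by $[s]\times\{\alpha_s\}$ with $s>r$ is not already accounted for by the block $\{1,\dots,\alpha_r\}$ — this is precisely the point at which the hypothesis that the positive parts of $\alpha$ are distinct is essential.
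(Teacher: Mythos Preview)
Your proof is correct and follows exactly the route the paper takes: the paper's proof is a three-line sketch (``(1) follows from Remark~\ref{R: snowy}; (2) follows from (1) and Lemma~\ref{L: dark recovers snow}; (3) immediately follows from (2)''), and you have simply written out those three steps in full, including the care needed in (2) to avoid overcounting. One tiny point worth making explicit in your argument for \eqref{snowy.weak.2}: when you say each $s>r$ with $\alpha_s>\alpha_r$ contributes a \emph{new} cell, you are also using that the columns $\alpha_s$ for distinct such $s$ are themselves distinct---this is again the snowy hypothesis, and you allude to it, but it would be cleaner to state it once up front.
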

\begin{proof}
(1) follows from Remark~\ref{R: snowy}.
(2) follows from (1) and Lemma~\ref{L: dark recovers snow}, and 
(3) immediately follows from (2). 
\end{proof}

As a consequence, we have the following rule which tells us 
how $\rajcode(s_i\alpha)$ differs from 
$\rajcode(\alpha)$ when $\alpha$ is snowy. 

\begin{cor}
\label{C: snowy changes}
Let $\alpha$ be a snowy weak composition and consider $i$ with $\alpha_i > \alpha_{i+1}$.
Then $\rajcode(s_i\alpha) = s_i\rajcode(\alpha) + e_i$,
where $e_i$ is the weak composition with
1 on its $i$\textsuperscript{th} entry and 0 elsewhere.
\end{cor}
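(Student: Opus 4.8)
The plan is to compute $\rajcode(s_i\alpha)$ directly from the closed formula in Lemma~\ref{L: dark, rajcode and raj of snowy weak composition}(2), comparing entry-by-entry with $\rajcode(\alpha)$. The first thing I would check is that $s_i\alpha$ is again snowy: swapping two entries does not change the multiset of values, so the positive entries of $s_i\alpha$ are still distinct, and Lemma~\ref{L: dark, rajcode and raj of snowy weak composition}(2) applies to both $\alpha$ and $s_i\alpha$. Write $a = \alpha_i$, $b = \alpha_{i+1}$, so $a > b$ by hypothesis, and $(s_i\alpha)_i = b$, $(s_i\alpha)_{i+1} = a$, with $(s_i\alpha)_r = \alpha_r$ for $r \notin \{i, i+1\}$.

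Next I would split into the relevant cases for each row index $r$. For $r \notin \{i, i+1\}$: the value $\alpha_r$ is unchanged, and the count $|\{r' > r : \alpha_r < \alpha_{r'}\}|$ changes only if the swap moved one of the two entries $a, b$ across the threshold $\alpha_r$ relative to its position after $r$. Since $\{a,b\}$ as a set is unchanged and, when $r < i$, both $i$ and $i+1$ remain $> r$, the count $|\{r' > r : \alpha_r < \alpha_{r'}\}|$ is identical for $\alpha$ and $s_i\alpha$; when $r > i+1$, the entries in positions $i, i+1$ are not counted at all. So $\rajcode(s_i\alpha)_r = \rajcode(\alpha)_r = (s_i\rajcode(\alpha))_r$ for all $r \notin\{i,i+1\}$, matching $e_i$ which vanishes off position $i$. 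For $r = i+1$: here $(s_i\alpha)_{i+1} = a$, and the relevant set is $\{r' > i+1 : a < \alpha_{r'}\}$, which does not involve positions $i$ or $i+1$; meanwhile $\rajcode(\alpha)_i = b + |\{r' > i : b < \alpha_{r'}\}| = b + [b < a] + |\{r' > i+1 : b < \alpha_{r'}\}| = b + 1 + |\{r' > i+1 : b < \alpha_{r'}\}|$. I would need to reconcile $\rajcode(s_i\alpha)_{i+1}$ with $\rajcode(\alpha)_i$: both have the form ``value plus a tail-count over positions $> i+1$'', but the values differ ($a$ versus $b$) and so do the tail-counts. The key identity is that for any threshold $t \notin \{a, b\}$ (or even allowing equality, since entries are distinct) we have $t + |\{r' > i+1 : t < \alpha_{r'}\}|$ is a monotone-type bookkeeping, but more simply: $\rajcode(s_i\alpha)_{i+1} = a + |\{r' > i+1 : a < \alpha_{r'}\}|$ while $(s_i\rajcode(\alpha))_{i+1} = \rajcode(\alpha)_i = b + 1 + |\{r' > i+1 : b < \alpha_{r'}\}|$; since $a > b$, the set $\{r' > i+1 : a < \alpha_{r'}\}$ is contained in $\{r' > i+1 : b < \alpha_{r'}\}$, and I claim $a + |\{r' > i+1: a < \alpha_{r'}\}| = b + 1 + |\{r'>i+1 : b < \alpha_{r'}\}|$ precisely when there are no entries of $\alpha$ strictly between $b$ and $a$ among positions $> i+1$ — which need not hold. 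So in fact $\rajcode(s_i\alpha)_{i+1}$ need \emph{not} equal $\rajcode(\alpha)_i$, and the corollary's claim is that the discrepancy is exactly absorbed by what happens at position $i$; the clean way to see it is to note $\rajcode(s_i\alpha)_{i+1} + \rajcode(s_i\alpha)_i$ should be compared with $\rajcode(\alpha)_i + \rajcode(\alpha)_{i+1} + 1$, using that the combined contribution of the two rows only depends on the two values and the tail, not on their order.

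The cleanest route, which I would actually write up, is therefore the symmetric one: for $r = i$, $\rajcode(s_i\alpha)_i = b + |\{r' > i : b < (s_i\alpha)_{r'}\}| = b + [b < a] + |\{r' > i+1 : b < \alpha_{r'}\}| = b + 1 + |\{r' > i+1 : b < \alpha_{r'}\}| = \rajcode(\alpha)_{i+1} + 1 = (s_i\rajcode(\alpha))_i + (e_i)_i$, since $\rajcode(\alpha)_{i+1} = b + |\{r'>i+1: b<\alpha_{r'}\}|$ (the entry $a$ in position $i < i+1$ is not counted). And for $r = i+1$, $\rajcode(s_i\alpha)_{i+1} = a + |\{r' > i+1 : a < \alpha_{r'}\}| = \rajcode(\alpha)_i$, because $\rajcode(\alpha)_i = a + |\{r' > i : a < \alpha_{r'}\}| = a + [a < b] + |\{r'>i+1 : a < \alpha_{r'}\}| = a + 0 + |\{r'>i+1: a<\alpha_{r'}\}|$ as $a > b$. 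So row $i+1$ contributes $\rajcode(\alpha)_i = (s_i\rajcode(\alpha))_{i+1}$ with no correction, and row $i$ contributes $\rajcode(\alpha)_{i+1}+1 = (s_i\rajcode(\alpha))_i + 1$. Combined with the off-support rows, this gives exactly $\rajcode(s_i\alpha) = s_i\rajcode(\alpha) + e_i$. The main obstacle is purely bookkeeping: being careful that the entries in positions $i$ and $i+1$ do not contribute to each other's tail-counts (position $i$ lies weakly below $i+1$ in the ``$r' > r$'' sense only for $r = i$, not $r = i+1$), which is what makes the asymmetry $+1$ land on position $i$ rather than $i+1$. No genuine difficulty beyond this.
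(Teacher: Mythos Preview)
Your proposal is correct and follows exactly the approach the paper intends: the corollary is stated without proof as an immediate consequence of Lemma~\ref{L: dark, rajcode and raj of snowy weak composition}(2), and your entry-by-entry computation from that formula (noting $s_i\alpha$ is again snowy) is precisely the verification the reader is expected to supply. The exploratory detour in your second paragraph is unnecessary---the clean computation in your third paragraph is all that is needed.
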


The second advantage of working with snowy weak compositions is that
they are in bijection with $\Rook_+$.

\begin{lem}
\label{L: snowy biject rook}
The map $\dark(\cdot)$ is a bijection
from $\{ \alpha \in C_+: \alpha \textrm{ is snowy}\}$
to $\Rook_+$.
Its inverse $\dark^{-1}(\cdot)$ 
is given by $\dark^{-1}(R) = \alpha$
where \begin{align*}
\alpha_r =
\begin{cases}
0 &\text{if row $r$ of $R$ is empty;} \\
c &\text{if $(r,c) \in R$.} 
\end{cases}
\end{align*}
\end{lem}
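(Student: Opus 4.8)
\textbf{Proof proposal.}

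The plan is to reduce everything to Lemma~\ref{L: dark, rajcode and raj of snowy weak composition}(\ref{snowy.weak.1}), which already gives the explicit description $\dark(\alpha) = \{(r,\alpha_r): \alpha_r > 0\}$ for snowy $\alpha$. First I would check that $\dark$ indeed maps snowy weak compositions into $\Rook_+$: each row $r$ contributes at most the single cell $(r,\alpha_r)$, and since the positive entries of a snowy $\alpha$ are pairwise distinct, no two of these cells share a column. Next I would verify that the proposed inverse is well defined on $\Rook_+$: given $R \in \Rook_+$, the "at most one cell per row" condition makes $\alpha_r$ unambiguous, finiteness of $R$ forces $\alpha \in C_+$, and the "at most one cell per column" condition means the positive entries of $\alpha$ (which are exactly the column indices appearing in $R$) are distinct, so $\dark^{-1}(R)$ is snowy.

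It then remains to check the two compositions are the identity. For $\dark^{-1}\circ\dark$: given snowy $\alpha$, Lemma~\ref{L: dark, rajcode and raj of snowy weak composition}(\ref{snowy.weak.1}) says $(r,\alpha_r)\in\dark(\alpha)$ exactly when $\alpha_r>0$, so applying $\dark^{-1}$ recovers $\alpha_r$ when $\alpha_r>0$ and returns $0$ (empty row) when $\alpha_r=0$. For $\dark\circ\dark^{-1}$: given $R\in\Rook_+$, set $\alpha=\dark^{-1}(R)$, which we have shown is snowy, so again $\dark(\alpha)=\{(r,\alpha_r):\alpha_r>0\}$; unwinding the definition of $\dark^{-1}$ shows $(r,\alpha_r)\in\dark(\alpha)$ iff $(r,\alpha_r)\in R$, giving $\dark(\alpha)=R$.

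There is essentially no obstacle here: the content of the statement lives entirely in Lemma~\ref{L: dark, rajcode and raj of snowy weak composition}(\ref{snowy.weak.1}) (itself a consequence of Remark~\ref{R: snowy}), and the remainder is bookkeeping about the two defining conditions of $\Rook_+$ versus the distinctness condition defining ``snowy.'' The only point requiring a moment's care is confirming that $\dark^{-1}(R)$ is snowy before one is allowed to invoke Lemma~\ref{L: dark, rajcode and raj of snowy weak composition}(\ref{snowy.weak.1}) in the second composition check; this is exactly where the column-injectivity of a non-attacking rook diagram is used.
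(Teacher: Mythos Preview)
Your proposal is correct and follows essentially the same route as the paper: the paper's proof is the single line ``Follows from Remark~\ref{R: snowy},'' and your argument simply routes through Lemma~\ref{L: dark, rajcode and raj of snowy weak composition}(\ref{snowy.weak.1}) (itself an immediate consequence of that remark) while spelling out the bookkeeping that the paper leaves implicit.
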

\begin{proof}
Follows from Remark \ref{R: snowy}.
\end{proof}

We are ready to show that they are representatives of all equivalence classes.

\LasB* 
\begin{proof}
Let $\gamma$ be an arbitrary weak composition. 
First, we construct a snowy $\alpha$ 
such that $\alpha \sim \gamma$.
We know $\dark(\gamma) \in \Rook_+$.
We send it to a snowy $\alpha$
using the map in Lemma~\ref{L: snowy biject rook}.
Then $\dark(\alpha) = \dark(\gamma)$.
By Proposition~\ref{P: equivalence},
$\alpha \sim \gamma$.

Next, take a positive integer $r$.
If $\alpha_r = 0$, then $\gamma_r \geqslant \alpha_r$ trivially.
Otherwise, we know $(r, \alpha_r) \in \dark(\alpha) = \dark(\gamma)$.
Thus, $\gamma_r \geqslant \alpha_r$.

Finally, we establish the uniqueness of this snowy $\alpha$.
Assume $\alpha'$ is a snowy weak composition
such that $\alpha' \sim \gamma$.
Then $\alpha'_r \geqslant \alpha_r$ and $\alpha_r \geqslant \alpha'_r$
for all $r \in \Z_{>0}$, so $\alpha = \alpha'$.
\end{proof}

A snowy weak composition has more snowflakes in its snow diagram 
than any others in its equivalence class; hence the name.
Say $\alpha \sim \gamma$ and
$\alpha$ is snowy while $\gamma$ is not. 
By Lemma~\ref{L: snowy weak composition in equivalence class}, $|\alpha| <  |\gamma|$.
On the other hand, 
the number of snowflakes in $\snow(D(\alpha))$ 
(resp. $\snow(D(\gamma))$)
is $\raj(\alpha) - |\alpha|$ (resp. $\raj(\gamma) - |\gamma|$). 
Since $\raj(\alpha) = \raj(\gamma)$, $\snow(D(\alpha))$
has more snowflakes than $\snow(D(\gamma))$.

\subsection{Proof of Lemma~\ref{L: snowy leading}}
\label{SS: snowy}

By Lemma~\ref{L: Top Las 1},
$\topLas_\alpha$ has degree at least $\raj(\alpha)$.
Next, we can show the degree of $\topLas_\alpha$
equals to $\raj(\alpha)$ when $\alpha$ is snowy. 

\begin{lemma}
\label{L: Snowy degree}
Let $\alpha$ be a snowy weak composition.
The $\beta$-degree of $\fLb_\alpha$ 
is $\raj(\alpha) - |\alpha|$,
so the degree of $\topLas_\alpha$ is $\raj(\alpha)$.
\end{lemma}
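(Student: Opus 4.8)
The plan is to prove the matching upper bound on the $\beta$-degree of $\fLb_\alpha$: combined with Lemma~\ref{L: Top Las 1}, which already gives the term $x^{\rajcode(\alpha)}\beta^{\raj(\alpha)-|\alpha|}$, this pins the $\beta$-degree exactly at $\raj(\alpha)-|\alpha|$, and hence the degree of $\topLas_\alpha$ at $\raj(\alpha)$. By Theorem~\ref{thm: k.kohnert}, $[\beta^d]\fLb_\alpha \neq 0$ precisely when there is a $K$-Kohnert diagram $D \in \KKD(\alpha)$ with $\ex(D) = d$; so I must show that \emph{every} $D \in \KKD(\alpha)$ satisfies $\ex(D) \leq \raj(\alpha) - |\alpha|$. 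Recall from the discussion at the end of Subsection~\ref{SS: equivalence} that $\raj(\alpha) - |\alpha|$ is exactly the number of snowflakes in $\snow(D(\alpha))$, i.e. the number of cells in the underlying diagram of $\snow(D(\alpha))$ that lie strictly above a dark cloud in their column. So the target inequality is: the number of ghosts in any $K$-Kohnert diagram of $\alpha$ is at most the number of snowflake-positions in $\snow(D(\alpha))$.

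The main step is to find, for each $D \in \KKD(\alpha)$, an injection from the ghosts of $D$ into the snowflake-positions of $\snow(D(\alpha))$. The natural approach is to track what the $K$-Kohnert moves can do, column by column. A ghost at $(r,c)$ in $D$ can only have been created when some cell was moved up past position $(r,c)$ within column $c$; a ghost move never lets a cell jump over a ghost, so ghosts in a fixed column $c$ occupy a set of rows that is controlled by how high cells in column $c$ can ever travel. Using Lemma~\ref{L: dark recovers snow} and the snowy hypothesis (via Lemma~\ref{L: dark, rajcode and raj of snowy weak composition}(\ref{snowy.weak.1}), $\dark(\alpha) = \{(r,\alpha_r): \alpha_r>0\}$), the underlying diagram of $\snow(D(\alpha))$ is $\bigcup_{r:\alpha_r>0} \big([r]\times\{\alpha_r\} \;\cup\; \{r\}\times[\alpha_r]\big)$, so in column $c$ its cells are exactly the rows $\{1,\dots,r_c\}$ where $r_c$ is the largest row index with a cell of $D(\alpha)$ in column $c$ (equivalently the unique $r$ with $\alpha_r = c$, when $c$ is a part of $\alpha$; one should also handle columns $c$ that are not parts but still meet $D(\alpha)$). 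The snowflake-positions in column $c$ are then the rows in $\{1,\dots,r_c\}$ that are empty in $D(\alpha)$. I would argue by a downward induction on columns (processing columns from right to left, mirroring the structure of the snow construction and of the algorithm in Subsection~\ref{SS: Top Las: Exist}) that a cell originally in column $c$ of $D(\alpha)$ can never rise strictly above row $1$ of course, but more usefully: the total number of ghosts that can ever be deposited in column $c$, over the whole history of a $K$-Kohnert sequence, is bounded by the number of empty rows of $D(\alpha)$ weakly below $r_c$ in column $c$ — because every ghost deposited in column $c$ sits at an empty spot of $D(\alpha)$ in that column, at a row that lies below the final landing position of the moved cell, and (crucially) in a snowy diagram the cells of $D(\alpha)$ in column $c$ together with the forced landing positions never force a ghost above row $r_c$.

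The hard part will be making the column-by-column counting rigorous in the presence of cells that migrate \emph{between} columns: a cell may enter column $c$ from the left (after an earlier up-move emptied a spot, nothing moves horizontally, but a cell that is the rightmost in its row can land in a column further right is not how $K$-Kohnert works — cells only move up, within a column) — so in fact cells never change columns, which simplifies matters considerably; the genuine subtlety is instead that ghosts in column $c$ can block later moves in column $c$, and that a ghost move in column $c$ changes the "rightmost cell in a row" status of cells in neighbouring rows. I would isolate this by the invariant that at every stage the diagram, restricted to columns $\geq c$, stays left-justified after ignoring columns $< c$ (an analogue of the Lemma in Subsection~\ref{SS: Top Las: Exist}), so that the only cells ever occupying column $c$ are the one cell of $D(\alpha)$ in that column plus cells that arrive from column $c$ itself — wait, no cells arrive, so column $c$ contains at most the original occupants of columns $\geq c$ in a single row each. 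With that invariant, a clean counting argument gives $\ex(D) \le \#\{\text{snowflake positions}\} = \raj(\alpha)-|\alpha|$, completing the proof. An alternative, possibly cleaner route I would fall back on: induct on $\alpha$ using the divided-difference recursion $\fLb_\alpha = \pi_i((1+\beta x_{i+1})\fLb_{s_i\alpha})$ together with Corollary~\ref{C: snowy changes} (which controls how $\raj$ changes under $s_i$ when $\alpha$ is snowy and $\alpha_i>\alpha_{i+1}$), checking that applying $\pi_i$ to $(1+\beta x_{i+1})$ times a polynomial of $\beta$-degree $d$ produces $\beta$-degree at most $d+1$, and that this bound is consistent with $\raj(s_i\alpha) = \raj(\alpha)+1$ from Lemma~\ref{L: dark, rajcode and raj of snowy weak composition}(\ref{snowy.weak.3}); the base case is $\alpha$ weakly decreasing, where $\fLb_\alpha = x^\alpha$ has $\beta$-degree $0 = \raj(\alpha)-|\alpha|$ since a weakly decreasing snowy composition is strictly decreasing on its positive part and has no strict ascents to the right.
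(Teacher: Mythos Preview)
Your fallback approach—induction on the number of ascents using the recursion $\fLb_\alpha = \pi_i((1+\beta x_{i+1})\fLb_{s_i\alpha})$ together with Corollary~\ref{C: snowy changes}—is exactly the paper's proof, and it works cleanly: the recursion raises the $\beta$-degree by at most one, Corollary~\ref{C: snowy changes} says $\raj$ also goes up by exactly one, and Lemma~\ref{L: Top Las 1} supplies the matching lower bound. The paper does not attempt the $K$-Kohnert route at all.

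Your primary $K$-Kohnert approach is a genuinely different idea, but as written it has real gaps. First, your description of column $c$ of $\snow(D(\alpha))$ is wrong: the cells there do \emph{not} form the interval $\{1,\dots,r_c\}$ in general, and $r_c$ (the lowest occupied row of $D(\alpha)$ in column $c$) is \emph{not} the unique $r$ with $\alpha_r = c$. For $\alpha = (2,0,4,3,1)$ in Example~\ref{E: sim of weak comp}, column $2$ of the snow diagram misses row $2$, and $r_2 = 4$ while the dark cloud sits at row $1$. Second, your claim that ``every ghost deposited in column $c$ sits at an empty spot of $D(\alpha)$'' is false: a ghost is left at the position a cell moves \emph{from}, which may well be an originally occupied cell of $D(\alpha)$ (e.g.\ the two-ghost diagram in Example~\ref{eg_021} has both ghosts at cells of $D(\alpha)$). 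Third, the promised injection from ghosts to snowflake positions is never actually constructed, and the invariant you invoke is established in Subsection~\ref{SS: Top Las: Exist} only for the one specific sequence of moves used there, not for an arbitrary $K$-Kohnert sequence. A columnwise bound of this flavor may well be true, but it would need a correct description of the snowflake locations and a genuine argument; since the recursion proof is short and already in hand, lead with that.
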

\begin{proof}
We prove the result by induction on 
\begin{align*}
\ell(\alpha) := |\{(i,j) \mid\, \alpha_i < \alpha_j \text{ and } i < j\}|.
\end{align*}
For the base case, if $\ell(\alpha) = 0$,
then $\alpha$ is weakly decreasing.
The polynomial $\fLb_\alpha$ is an monomial with $\beta$-degree $0$.
Correspondingly, $\raj(\alpha) = |\alpha|$.

Now if $\ell(\alpha) > 0$,
we can find $i$ with $\alpha_i < \alpha_{i+1}$.
By Corollary~\ref{C: snowy changes},
$\raj(s_i \alpha) = \raj(\alpha) - 1$.
Notice that $\ell(s_i \alpha) = \ell(\alpha) - 1$.
By our inductive hypothesis, 
the $\beta$-degree of $\fLb_{s_i \alpha}$
is $\raj(s_i \alpha) - |\alpha| = \raj(\alpha) - 1 - |\alpha|$.
By the recursive definition of Lascoux polynomials,
$$
\fLb_\alpha 
= \pi_i(\fLb_{s_i\alpha}) + \beta \pi_i(x_{i+1}\fLb_{s_i\alpha}).
$$
The $\beta$-degree in $\fLb_\alpha$
is at most $\raj(\alpha) - |\alpha|$.
Lemma~\ref{L: Top Las 1} implies the 
$\beta$-degree of $\fLb_\alpha$ is at least 
$\raj(\alpha) - |\alpha|$,
so the inductive step is finished.
\end{proof}

Combine with Lemma~\ref{L: dark, rajcode and raj of snowy weak composition},
we have: 
\begin{cor}
Let $\alpha$ be a snowy weak composition.
The degree of $\topLas_\alpha$
is $|\alpha| + |\{(r, r'): r < r', \alpha_r < \alpha_{r'}\}|$.
\end{cor}

Now we can describe $\topLas_\alpha$ for snowy 
$\alpha$ recursively.
\begin{lemma}\label{L: recursive top Las}
Let $\alpha$ be a snowy weak composition. 
Then
\begin{align}
\topLas_\alpha = \begin{cases}
x^\alpha & \text{if $\alpha_1 \geqslant \alpha_2 \geqslant \cdots$} \\
\pi_i (x_{i+1} \topLas_{s_i\alpha}) &\text{if $\alpha_i<\alpha_{i+1}$.}
\end{cases}
\end{align}
\end{lemma}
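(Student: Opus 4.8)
The plan is to read the top $\beta$-degree component directly off the defining recursion for $\fLb_\alpha$. If $\alpha$ is weakly decreasing there is nothing to prove: then $\fLb_\alpha = x^\alpha$ is free of $\beta$, so $\topLas_\alpha = x^\alpha$. Otherwise there is an $i$ with $\alpha_i < \alpha_{i+1}$, and since $s_i\alpha$ has the same entries as $\alpha$ it is again snowy, so the prior lemmas of this subsection apply to it; note also that the number of pairs $i<j$ with $\alpha_i<\alpha_j$ strictly drops under $\alpha \mapsto s_i\alpha$, so the two cases exhaust all snowy $\alpha$ and the recursion can be unwound.

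So assume $\alpha_i < \alpha_{i+1}$. First I would pin down the $\beta$-degrees. By Lemma~\ref{L: Snowy degree} applied to the snowy compositions $\alpha$ and $s_i\alpha$, the $\beta$-degrees of $\fLb_\alpha$ and $\fLb_{s_i\alpha}$ are $\raj(\alpha)-|\alpha|$ and $\raj(s_i\alpha)-|\alpha|$ respectively; and by Corollary~\ref{C: snowy changes}, applied to $s_i\alpha$ (whose $i$-th entry exceeds its $(i+1)$-st), these differ by exactly $1$, with $\fLb_\alpha$ the larger. Write $d := \raj(s_i\alpha)-|\alpha|$, so $\fLb_{s_i\alpha}$ has $\beta$-degree $d$ with $[\beta^d]\fLb_{s_i\alpha} = \topLas_{s_i\alpha}$, while $\fLb_\alpha$ has $\beta$-degree $d+1$.

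Next I would expand the recursion as $\fLb_\alpha = \pi_i(\fLb_{s_i\alpha}) + \beta\,\pi_i(x_{i+1}\fLb_{s_i\alpha})$ and extract the coefficient of $\beta^{d+1}$. Since $\pi_i$ and multiplication by $x_{i+1}$ act only on the $x$-variables, they commute with the operator $[\beta^k](\cdot)$ of taking a fixed power of $\beta$. Hence the first summand contributes $\pi_i\big([\beta^{d+1}]\fLb_{s_i\alpha}\big) = 0$, while the second contributes $\pi_i\big(x_{i+1}\,[\beta^{d}]\fLb_{s_i\alpha}\big) = \pi_i(x_{i+1}\,\topLas_{s_i\alpha})$. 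Therefore $\topLas_\alpha = [\beta^{d+1}]\fLb_\alpha = \pi_i(x_{i+1}\,\topLas_{s_i\alpha})$; in particular the right-hand side does not depend on the chosen descent $i$, as it must not, and it is automatically nonzero since $\fLb_\alpha$ genuinely attains $\beta$-degree $d+1$.

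The only real content is the degree bookkeeping in the middle step, namely that the top $\beta$-power of $\fLb_\alpha$ is carried entirely by the $\beta\,\pi_i(x_{i+1}\fLb_{s_i\alpha})$ term and is neither matched nor cancelled by $\pi_i(\fLb_{s_i\alpha})$. This is exactly what Lemma~\ref{L: Snowy degree} together with Corollary~\ref{C: snowy changes} deliver, so I do not anticipate any serious obstacle; everything else is a formal manipulation of the Lascoux recursion, using only that $\pi_i$ and $x_{i+1}$ leave $\beta$ untouched.
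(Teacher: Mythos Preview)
Your proof is correct and follows essentially the same approach as the paper: both expand $\fLb_\alpha = \pi_i(\fLb_{s_i\alpha}) + \beta\,\pi_i(x_{i+1}\fLb_{s_i\alpha})$, use Corollary~\ref{C: snowy changes} (applied to $s_i\alpha$) together with the known $\beta$-degree of $\fLb_{s_i\alpha}$ from Lemma~\ref{L: Snowy degree} to see that the top $\beta$-coefficient comes entirely from the second summand, and read off $\topLas_\alpha = \pi_i(x_{i+1}\topLas_{s_i\alpha})$. The paper phrases the extraction by splitting $\fLb_{s_i\alpha} = g + \beta^d\topLas_{s_i\alpha}$, while you phrase it via the commutation of $[\beta^k](\cdot)$ with $\pi_i$ and multiplication by $x_{i+1}$; these are the same computation.
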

\begin{proof}
When $\alpha$ is weakly decreasing, 
our rule is immediate.
Now assume $\alpha_i < \alpha_{i+1}$ for some $i\in \Z_{>0}$.
By Corollary~\ref{C: snowy changes},
$\raj(s_i \alpha) = \raj(\alpha) - 1$.
We write $\fLb_{s_i \alpha}$
as $g + \beta^{\raj(\alpha) - 1 - |\alpha|} \: \topLas_{s_i \alpha}$
for some $g \in \Z[x_1, x_2, \cdots][\beta]$
with $\beta$-degree less than 
$\raj(\alpha) - 1 - |\alpha|$.
Now we write $\fLb_\alpha$ as
\begin{align*}
\fLb_\alpha & = \pi_i(\fLb_{s_i \alpha}) 
+ \beta \pi_i(x_{i+1}\fLb_{s_i \alpha})  \\ 
& =\pi_i(\fLb_{s_i \alpha})
+ \beta \pi_i(x_{i+1}g) + \beta^{\raj(\alpha) - |\alpha|} \pi_i(x_{i+1}\topLas_{s_i \alpha})   
\end{align*}
When we extract the coefficient of $\beta^{\raj(\alpha) - |\alpha|}$,
the left-hand side is $\topLas_\alpha$.
On the right-hand side, 
the first two terms are ignored and 
we get $\pi_i(x_{i+1}\topLas_{s_i \alpha})$.
\end{proof}

Combining Lemma~\ref{L: Top Las 1} and Lemma~\ref{L: Snowy degree},
we know $x^{\rajcode(\alpha)}$ appears in $\topLas_\alpha$
when $\alpha$ is snowy. 
Next, we show this monomial
is the leading monomial of $\topLas_\alpha$.
We start with the following observation 
about the operator $f \mapsto \pi_i(x_{i+1} f)$.
\begin{rem}
\label{R: leading}
Let $\gamma$ be a monomial.
We may describe the leading monomial of
$\pi_i(x_{i+1}x^\gamma)$ and its coefficient as follows. 
\begin{itemize}
\item If $\gamma_i > \gamma_{i+1}$,
then $x_i x^{s_i \gamma}$ is the leading monomial
with coefficient 1.
\item If $\gamma_i = \gamma_{i+1}$,
then $\pi_i(x_{i+1}x^\gamma) = 0$. 
\item If $\gamma_i < \gamma_{i+1}$,
then $x_i x^\gamma$ is the leading monomial 
with coefficient $-1$. 
\end{itemize}
\end{rem}

We can understand how the operator
$f \mapsto \pi_i(x_{i+1} f)$ changes 
the leading monomial of polynomial $f$
satisfying certain conditions. 

\begin{lemma}
\label{L: monomials from operator}
Take $f \in \Z[x_1, x_2, \cdots ]$ with $f \neq 0$.
Assume $x^\alpha$ is the leading monomial in $f$ 
with coefficient $c \neq 0$.
Pick an $i\in \Z_{>0}$ such that $\alpha_{i} > \alpha_{i+1}$.
Furthermore, assume for any monomial in $f$, 
its power of $x_{i}$ is at most $\alpha_{i}$.
Then $x_ix^{s_i \alpha}$ is the leading monomial in
$\pi_{i}(x_{i+1}f)$ with coefficient $c$. 
\end{lemma}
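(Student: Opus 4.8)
The plan is to expand $\pi_i(x_{i+1}f)$ by linearity over the monomials of $f$ and track which terms can contribute to the tail-lexicographically largest monomial. Write $f = \sum_\gamma c_\gamma x^\gamma$ with $c_\alpha = c$, where $x^\alpha$ is the leading monomial. First I would invoke Remark~\ref{R: leading} term by term: for each $\gamma$ with $c_\gamma \neq 0$, the monomial $\pi_i(x_{i+1}x^\gamma)$ is either $0$ (when $\gamma_i = \gamma_{i+1}$), or has leading monomial $x_i x^{s_i\gamma}$ with coefficient $1$ (when $\gamma_i > \gamma_{i+1}$), or has leading monomial $x_i x^\gamma$ with coefficient $-1$ (when $\gamma_i < \gamma_{i+1}$). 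In all nonzero cases, $\pi_i(x_{i+1}x^\gamma)$ is a sum of monomials of the form $x_i \cdot x_{i+1}^{a} x_i^{b}\,(x^\gamma \text{ restricted to other variables})$ with $a + b = \gamma_i + \gamma_{i+1}$; in particular every monomial appearing has $x_j$-exponent equal to $\gamma_j$ for $j \neq i, i+1$, and its $x_i$-exponent is at most $\max(\gamma_i, \gamma_{i+1}) + 1$.

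Next I would argue that the candidate leading monomial $x_i x^{s_i\alpha}$, which equals $x_{i+1}^{\alpha_{i+1}} x_i^{\alpha_i + 1} \prod_{j \neq i,i+1} x_j^{\alpha_j}$ up to reindexing (its $x_i$-exponent is $\alpha_{i+1}+1$ wait — let me restate: $s_i\alpha$ swaps entries $i$ and $i{+}1$, so $x^{s_i\alpha}$ has $x_i$-exponent $\alpha_{i+1}$ and $x_{i+1}$-exponent $\alpha_i$, and multiplying by $x_i$ gives $x_i$-exponent $\alpha_{i+1}+1$), beats every other monomial produced. The key point is the hypothesis that every monomial of $f$ has $x_i$-exponent at most $\alpha_i$: combined with the exponent bookkeeping above, every monomial in $\pi_i(x_{i+1}f)$ coming from some $x^\gamma$ has $x_i$-exponent at most $\max(\gamma_i,\gamma_{i+1})+1 \le \alpha_i + 1$ only if $\gamma_{i+1} \le \alpha_i$ as well, so I need to also control $\gamma_{i+1}$. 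Here I would use that $x^\alpha$ is the \emph{leading} monomial under the tail order: for a monomial $x^\gamma \neq x^\alpha$ of $f$, either $\gamma$ agrees with $\alpha$ above some index $k$ where $\gamma_k < \alpha_k$, or $\gamma$ is entirely below $\alpha$. I would then compare the images position by position from the top: for $j > i+1$, the $x_j$-exponent of every monomial in $\pi_i(x_{i+1}x^\gamma)$ is $\gamma_j$, so any $\gamma$ that differs from $\alpha$ strictly above index $i+1$ produces only monomials strictly smaller than $x_i x^{s_i\alpha}$ (which has $x_j$-exponent $\alpha_j$ for $j > i+1$). This reduces the analysis to those $\gamma$ with $\gamma_j = \alpha_j$ for all $j > i+1$.

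For such $\gamma$, I would split on the three cases of Remark~\ref{R: leading} and on whether $\gamma = \alpha$. The term $\gamma = \alpha$ contributes exactly $c\, x_i x^{s_i\alpha}$ (plus lower terms), since $\alpha_i > \alpha_{i+1}$. For $\gamma \neq \alpha$ with $\gamma_j = \alpha_j$ ($j > i+1$): we must have $\gamma_{i+1} \le \alpha_{i+1}$, because if $\gamma_{i+1} > \alpha_{i+1}$ then $x^\gamma$ would exceed $x^\alpha$ in the tail order (first disagreement at index $i+1$ in $\gamma$'s favor), contradicting that $x^\alpha$ is leading. Together with $\gamma_i \le \alpha_i$ from the hypothesis, every monomial of $\pi_i(x_{i+1}x^\gamma)$ has $x_{i+1}$-exponent plus $x_i$-exponent $= \gamma_i + \gamma_{i+1} \le \alpha_i + \alpha_{i+1}$, with $x_j$-exponents matching $\alpha$ for $j > i+1$; comparing at index $i+1$, such a monomial has $x_{i+1}$-exponent at most $\max(\gamma_i,\gamma_{i+1}) \le \alpha_i$, and one checks it is strictly less than $x_i x^{s_i\alpha}$ (whose $x_{i+1}$-exponent is $\alpha_{i+1} < \alpha_i$) unless it ties at index $i+1$, in which case one descends to index $i$ and uses $\gamma_i + \gamma_{i+1} < \alpha_i + \alpha_{i+1}$ is \emph{not} forced — so the genuinely delicate sub-case is $\{\gamma_i,\gamma_{i+1}\} = \{\alpha_i,\alpha_{i+1}\}$ as a multiset with $\gamma \neq \alpha$, i.e. $\gamma_i = \alpha_{i+1}, \gamma_{i+1} = \alpha_i$; but then $\gamma_{i+1} = \alpha_i > \alpha_{i+1}$ contradicts $\gamma_{i+1} \le \alpha_{i+1}$, so this sub-case is empty. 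Hence no cancellation or competition at $x_i x^{s_i\alpha}$ occurs, and its coefficient in $\pi_i(x_{i+1}f)$ is exactly $c$.

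\textbf{Main obstacle.} The delicate part is the exponent bookkeeping in the middle two variables $x_i, x_{i+1}$: one must combine the hypothesis $\gamma_i \le \alpha_i$ with the consequence $\gamma_{i+1} \le \alpha_{i+1}$ of $x^\alpha$ being the leading monomial, and then verify that the only monomial $\pi_i(x_{i+1}f)$ produces that is tail-lex-$\ge x_i x^{s_i\alpha}$ is $x_i x^{s_i\alpha}$ itself, coming solely from the $\gamma = \alpha$ term. I expect this case analysis — ruling out ties at positions $i+1$ and $i$ for all competing $\gamma$ — to be where all the care is needed; the rest is a routine linearity-plus-Remark~\ref{R: leading} argument.
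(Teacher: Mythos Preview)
Your approach is essentially the paper's: expand by linearity, invoke Remark~\ref{R: leading} term by term, reduce to those $\gamma$ with $\gamma_j=\alpha_j$ for all $j>i{+}1$, and then squeeze using $\gamma_{i+1}\le\alpha_{i+1}$ (from $x^\alpha$ being tail-lex leading) together with the hypothesis $\gamma_i\le\alpha_i$. The paper is slightly more economical in that it only compares the \emph{leading} monomial $x^{\gamma'}$ of each $\pi_i(x_{i+1}x^\gamma)$ against $x_ix^{s_i\alpha}$, rather than all monomials, but the logic is the same.

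There is one slip and one genuine gap. The slip: in your parenthetical you write that $x_ix^{s_i\alpha}$ has $x_{i+1}$-exponent $\alpha_{i+1}$; it is $\alpha_i$, as you correctly stated a few lines earlier. The gap is in your ``delicate sub-case''. From $\gamma_i\le\alpha_i$ and $\gamma_{i+1}\le\alpha_{i+1}$, the multiset equality $\{\gamma_i,\gamma_{i+1}\}=\{\alpha_i,\alpha_{i+1}\}$ forces $(\gamma_i,\gamma_{i+1})=(\alpha_i,\alpha_{i+1})$, \emph{not} the swap (the swap is exactly what you rule out, so your ``i.e.'' identifies the wrong branch). But $(\gamma_i,\gamma_{i+1})=(\alpha_i,\alpha_{i+1})$ together with $\gamma_j=\alpha_j$ for $j>i{+}1$ does \emph{not} imply $\gamma=\alpha$: they may still differ at some index $j<i$. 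In that case the leading monomial $x_ix^{s_i\gamma}$ of $\pi_i(x_{i+1}x^\gamma)$ agrees with $x_ix^{s_i\alpha}$ at every index $\ge i$, and you must still argue it is strictly smaller. This is easy --- since $x^\gamma<x^\alpha$ and they agree for $j\ge i$, the largest index of disagreement is some $k<i$ with $\gamma_k<\alpha_k$, and the same comparison carries over to $x_ix^{s_i\gamma}$ versus $x_ix^{s_i\alpha}$ --- but it is exactly the final ``$k<i$'' step in the paper's proof, and your argument is incomplete without it.
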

\begin{proof}
In this proof, we use ``$\geq$'' to denote the 
monomial order.
Let $\Gamma$ be the set of weak compositions $\gamma$
such that $x^\gamma$ appears in $f$.
Let $c_\gamma$ be the coefficient of $x^\gamma$ in $f$.
We may write $f = \sum_{\gamma \in \Gamma} c_\gamma x^\gamma$.
Then 
$\pi_{i}(x_{i+1}f) = \sum_{\gamma \in \Gamma} c_\gamma \pi_{i}(x_{i+1}x^\gamma)$.
By the remark above, $x_ix^{s_i \alpha}$ appears 
in $c_\alpha\pi_i(x_{i+1}x^\alpha)$ as the leading monomial
with coefficient $c_\alpha = c$.
It is enough to show the following claim.

\noindent\textbf{Claim:} 
Take $\gamma \in \Gamma$ such that 
$\pi_i(x_{i+1}x^{\gamma}) \neq 0$ 
(i.e. $\gamma_i \neq \gamma_{i+1}$). 
Let $x^{\gamma'}$ be the leading monomial 
in $\pi_i(x_{i+1}x^{\gamma})$.
If $x^{\gamma'}  \geq x_ix^{s_i \alpha}$,
then $\gamma = \alpha$.

\noindent\textbf{Proof:} 
Assume $\alpha \neq \gamma$.
Let $k$ be the largest index such that 
the power of $x_k$ differs in $x^{\gamma'}$ and $x_ix^{s_i \alpha}$.
By $x^{\gamma'}  \geq x_ix^{s_i \alpha}$,
the power of $x_k$ in $x^{\gamma'}$ is greater than
the power of $x_k$ in $x_ix^{s_i \alpha}$.
We must have $k \leqslant i+1$.
Otherwise,
$x^\gamma > x^\alpha$,
which contradicts $x^\alpha$ being the leading monomial in $f$.

Now we know 
$\gamma'$, $\alpha$ and $\gamma$ all agree after 
the $(i+1)^{th}$ entry.
Then
$\gamma_{i+1}'$ is at least
the power of $x_{i+1}$ in $x_ix^{s_i \alpha}$,
which is $\alpha_i$.
On the other hand, by $x^\gamma \leqslant x^\alpha$,
$\gamma_{i+1} \leqslant \alpha_{i+1}$.
Thus, 
\begin{equation}
\label{EQ: chain inequality}
\gamma_{i+1} \leq  \alpha_{i+1}<\alpha_i \leq \gamma_{i+1}'.
\end{equation}
If $\gamma_{i} < \gamma_{i+1}$,
Remark~\ref{R: leading} implies $\gamma_{i+1}'= \gamma_{i+1}$,
which is impossible.
Thus, we must have $\gamma_i > \gamma_{i+1}$. 
By Remark~\ref{R: leading} again, $\gamma_{i+1}' = \gamma_i$.
By the assumptions in the statement of the lemma, $\gamma_i \leqslant \alpha_i$,
so $\gamma_{i+1}' = \gamma_i = \alpha_i$.

Next, 
$\gamma_{i}'$ is at least
the power of $x_{i}$ in $x_ix^{s_i \alpha}$,
which is $\alpha_{i+1} + 1$.
Remark~\ref{R: leading} implies $\gamma_i' = \gamma_{i+1} + 1$.
Thus, $\gamma_{i+1} \geqslant \alpha_{i+1}$.
By~(\ref{EQ: chain inequality}), $\gamma_{i+1} = \alpha_{i+1}$.

Now we know $k < i$
and $\gamma_j = \alpha_j$ for $j = i$ or $i+1$.
Thus, $\gamma_j = \alpha_j$ for all $j > k$,
so $x^\gamma > x^\alpha$, which is a contradiction. 
\end{proof}

Now we can establish our third major lemma. 
\LasC*
\begin{proof}
We prove the result by induction on 
\begin{align*}
\ell(\alpha) := |\{(i,j) \mid\, \alpha_i < \alpha_j \text{ and } i < j\}|.
\end{align*}

If $\ell(\alpha)=0$, then $\alpha$ is weakly decreasing, 
then $\fLb_\alpha = 
x^\alpha = x^{\rajcode(\alpha)}$.
Our claim is immediate. 

Now if $\ell(\alpha) > 0$,
we can find $r$ with $\alpha_r < \alpha_{r+1}$.
Pick the largest such $r$.
For our inductive hypothesis,
assume $x^{\rajcode(s_r \alpha)}$ is the leading monomial 
of $\topLas_{s_r \alpha}$ with coefficient 1.

By the maximality of $r$, 
$\alpha_{r+1} \geqslant \alpha_{r+2} 
\geqslant \alpha_{r+3} \geqslant \cdots$.
Thus, in any $K$-Kohnert diagram of $s_r \alpha$,
there cannot be more than $\alpha_{r+1}$ cells
in row $r$.
In other words, 
for any monomial of $\topLas_{s_r \alpha}$,
the power of $x_r$ is at most $\alpha_{r+1}$.
Lemma~\ref{L: monomials from operator} implies that $x_r x^{s_r \rajcode(s_r \alpha)}$
is the leading monomial of $\topLas_{\alpha}$ with coefficient 1. 
Finally, by Corollary~\ref{C: snowy changes},
$x_r x^{s_r \rajcode(s_r \alpha)} = x^{\rajcode(\alpha)}$.
\end{proof}

\subsection{Proof of Lemma~\ref{L: Top Las similar}}
\label{SS: last major lemma}
We first derive two consequences 
of $\alpha \sim \gamma$.
We start with the following definition.
\begin{defn}
Let $D$ be a diagram.
Let $\overline{D} := \bigcup_{(r,c) \in D} [r] \times \{c\}$.
\end{defn}

In plain words, $\overline{D}$ is the diagram obtained
by filling the empty spaces above each cell of $D$.
Then $\overline{D(\alpha)}$ is completely determined 
by $\dark(\alpha)$:
\begin{lemma}
Let $\alpha$ be a weak composition.
Then $\overline{D(\alpha)} = \bigcup_{(r,c) \in \dark(\alpha)} [r] \times [c]$.
\end{lemma}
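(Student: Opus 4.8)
The plan is to prove the identity $\overline{D(\alpha)} = \bigcup_{(r,c) \in \dark(\alpha)} [r] \times [c]$ by a double containment argument, using the two structural facts already established: that $D(\alpha)$ is left-justified, and that the underlying diagram of $\snow(D(\alpha))$ is described by Lemma~\ref{L: dark recovers snow}, namely it equals $\bigcup_{(r,c)\in\dark(\alpha)} ([r]\times\{c\}) \cup (\{r\}\times[c])$.

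First I would show $\bigcup_{(r,c)\in\dark(\alpha)}[r]\times[c] \subseteq \overline{D(\alpha)}$. Take any $(r,c) \in \dark(\alpha)$; then $(r,c) \in D(\alpha)$, and since $D(\alpha)$ is left-justified, the whole row segment $\{r\}\times[c]$ lies in $D(\alpha)$. By definition of $\overline{(\cdot)}$, filling the columns above each cell of $\{r\}\times[c]$ produces exactly $[r]\times[c]$, so $[r]\times[c] \subseteq \overline{D(\alpha)}$. Taking the union over all dark cells gives the inclusion.

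For the reverse inclusion $\overline{D(\alpha)} \subseteq \bigcup_{(r,c)\in\dark(\alpha)}[r]\times[c]$, I would take an arbitrary cell $(a,b) \in \overline{D(\alpha)}$, which by definition means there is some $a' \geq a$ with $(a',b) \in D(\alpha)$. Using Remark~\ref{R: Basic of dark}, from the cell $(a',b) \in D(\alpha)$ one can locate a dark cloud weakly to the right and weakly below: more carefully, I would argue that column $b$ of $\snow(D(\alpha))$ contains a cell at row $a$ (it contains all of $\{a'\}\times[b]$ hence the cell $(a',b)$, and either $(a',b)$ is itself a dark cloud or is dominated by a dark cloud $(r,c)$ with $r \geq a'$, $c \geq b$; in the former case the snowflakes stacked above give $(a,b) \in [a']\times\{b\} \subseteq [r]\times\{c\}$, in the latter Lemma~\ref{L: dark recovers snow} already places $(a,b)$ in $[r]\times\{c\} \subseteq [r]\times[c]$). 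In either case $(a,b)$ lies in $[r]\times[c]$ for some $(r,c)\in\dark(\alpha)$. Alternatively, and perhaps more cleanly, I would deduce the whole statement directly from Lemma~\ref{L: dark recovers snow}: the set $\bigcup_{(r,c)\in\dark(\alpha)}([r]\times\{c\})\cup(\{r\}\times[c])$ is the underlying diagram of $\snow(D(\alpha))$, and applying the $\overline{(\cdot)}$ operator to it is harmless since it is already ``column-closed'' in the relevant sense; meanwhile $\overline{D(\alpha)} = \overline{\snow(D(\alpha))}$ because $\snow(D(\alpha))$ only adds snowflakes above existing cells, and $\overline{\bigcup_{(r,c)\in\dark(\alpha)}([r]\times\{c\})\cup(\{r\}\times[c])}$ visibly equals $\bigcup_{(r,c)\in\dark(\alpha)}[r]\times[c]$ since closing $\{r\}\times[c]$ upward yields $[r]\times[c]$, which already contains $[r]\times\{c\}$.

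I expect the main (albeit mild) obstacle to be bookkeeping rather than any real difficulty: one must be careful that $\overline{D(\alpha)}$ is computed from $D(\alpha)$ and not from $\snow(D(\alpha))$, and to justify cleanly that these two give the same answer — this rests on the fact that every snowflake of $\snow(D(\alpha))$ sits directly above a cell of $D(\alpha)$ in the same column, so adding snowflakes is invisible to the upward-closure operation. Once that observation is in place, the identity follows immediately by combining it with Lemma~\ref{L: dark recovers snow} and the elementary fact that the upward closure of $([r]\times\{c\})\cup(\{r\}\times[c])$ is $[r]\times[c]$.
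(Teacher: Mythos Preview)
Your proposal is correct, and your primary double-containment argument is essentially the paper's proof: the paper shows $[r]\times[c]\subseteq\overline{D(\alpha)}$ for each dark cloud $(r,c)$ via left-justification (exactly as you do), and for the other inclusion it takes $(r_1,c_1)\in D(\alpha)$, invokes Remark~\ref{R: Basic of dark} to produce a dark cloud $(r_2,c_2)$ with $r_2\geq r_1$ and $c_2\geq c_1$, and concludes $[r_1]\times\{c_1\}\subseteq[r_2]\times[c_2]$. The paper never passes through Lemma~\ref{L: dark recovers snow} or the snow diagram at all; your detours through $\snow(D(\alpha))$ (and your alternative route via $\overline{D(\alpha)}=\overline{\snow(D(\alpha))}$) are valid but add machinery the direct argument does not need.
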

\begin{proof}
We show each side is a subset of the other. 
Take $(r_1,c_1) \in D(\alpha)$. 
By Remark~\ref{R: Basic of dark},
there is $(r_2, c_2) \in \dark(\alpha)$ such that 
$r_2 \geqslant r_1$ and $c_2 \geqslant c_1$.
Thus, $[r_1] \times \{c_1\} \subseteq [r_2] \times [c_2]$.

Take $(r_1,c_1) \in \dark(\alpha)$.
Thus, for any $c \in [c_1]$, 
$(r_1,c) \in D(\alpha)$.
Then $[r_1] \times \{c\} \subseteq \overline{D(\alpha)}$,
so $[r_1] \times [c_1] \subseteq \overline{D(\alpha)}$. 
\end{proof}

We have the following consequence of 
$\alpha \sim \gamma$.

\begin{cor}
\label{C: sim implies same overline}
If $\alpha \sim \gamma$,
then $\overline{D(\alpha)} = \overline{D(\gamma)}$.
\end{cor}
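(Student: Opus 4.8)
The plan is to read this off from the two results immediately preceding it, with essentially no extra work. By Proposition~\ref{P: equivalence}, the relation $\alpha \sim \gamma$ is equivalent to the equality of dark cloud diagrams $\dark(\alpha) = \dark(\gamma)$. The lemma stated just above the corollary expresses $\overline{D(\alpha)}$ purely in terms of $\dark(\alpha)$, namely $\overline{D(\alpha)} = \bigcup_{(r,c)\in\dark(\alpha)}[r]\times[c]$. So the strategy is: assume $\alpha \sim \gamma$, apply Proposition~\ref{P: equivalence} to get $\dark(\alpha) = \dark(\gamma)$, and then substitute into the formula for $\overline{D(\cdot)}$ to conclude
\[
\overline{D(\alpha)} \;=\; \bigcup_{(r,c)\in\dark(\alpha)}[r]\times[c] \;=\; \bigcup_{(r,c)\in\dark(\gamma)}[r]\times[c] \;=\; \overline{D(\gamma)}.
\]

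There is no genuine obstacle here; the entire content has been front-loaded into Proposition~\ref{P: equivalence} (which in turn rests on Lemma~\ref{L: same rajcode => same dark} and Lemma~\ref{L: dark recovers snow}) and into the displayed description of $\overline{D(\alpha)}$ via dark clouds. If one preferred to avoid passing through $\dark(\cdot)$, an alternative would be to use implication $(1)\Rightarrow(3)$ of Proposition~\ref{P: equivalence}, that the underlying diagrams of $\snow(D(\alpha))$ and $\snow(D(\gamma))$ coincide, together with the observation that column $c$ of $\overline{D(\alpha)}$ is determined by the topmost occupied cell of column $c$ in $\snow(D(\alpha))$; but the dark cloud route is shorter and cleaner, so that is the one I would write up.

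The only thing worth a sentence of care is making sure the chain of equalities above is stated at the level of sets of cells in $\mathbb{Z}_{>0}\times\mathbb{Z}_{>0}$, so that "the same dark cloud diagram" literally means "the same indexing set of the union," after which the two unions are identical term by term. That check is immediate, so the proof is a two-line argument.
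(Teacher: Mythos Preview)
Your proposal is correct and matches the paper's approach exactly: the paper states this corollary immediately after the lemma expressing $\overline{D(\alpha)}$ as $\bigcup_{(r,c)\in\dark(\alpha)}[r]\times[c]$ and gives no separate proof, since it follows at once from that lemma together with Proposition~\ref{P: equivalence}.
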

Notice that the converse is not true. 
If $\alpha = (1,2)$ and $\gamma = (0,2)$,
then $\overline{D(\alpha)} = [2] \times [2]
=  \overline{D(\gamma)}$.
However, $\alpha$ and $\gamma$ are not similar,
since $\dark(\alpha) = \{ (1,1), (2,2)\}$
and $\dark(\gamma) = \{(2,2)\}$.

Another nice consequence of $\alpha \sim \gamma$ one might expect
is $s_r \alpha \sim s_r \gamma$.
Unfortunately, this is not always true. 
It is easy to check $(0,1) \sim (1,1)$
but $s_1(0,1) = (1,0)$  and $s_1(1,1) = (1,1)$ are not similar. 
However, it is true
when $\alpha$ and $r$ satisfy 
the following condition.

\begin{lemma}
\label{L: s_i fix sim}
Let $\alpha$ be a weak composition and $r\in \Z_{>0}$.
Assume there exists $c$
such that $(r, c)\notin \snow(D(\alpha))$
but $(r+1, c)\in \snow(D(\alpha))$.
Then 
\begin{enumerate}
\item[(i)] $\alpha_{r+1} > \alpha_r$;
\item[(ii)] The diagram $\dark(s_r \alpha)$
is obtained from $\dark(\alpha)$
by switching row $r$ and row $r+1$;
\item[(iii)] For any $\gamma$ with
$\gamma \sim \alpha$,
we must have $\gamma_{r+1} > \gamma_{r}$ 
and $s_r\alpha \sim s_r\gamma$.
\end{enumerate}
\end{lemma}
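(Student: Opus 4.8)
The plan is to extract everything from the structure of $\snow(D(\alpha))$ near rows $r$ and $r+1$, using the column-by-column description of the snow diagram and the recovery results already proved.

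First I would prove (i). The hypothesis gives a column $c$ with $(r,c) \notin \snow(D(\alpha))$ but $(r+1,c) \in \snow(D(\alpha))$. Since $(r+1,c)$ lies in the snow diagram but $(r,c)$ does not, and snowflakes in a column sit \emph{above} a dark cloud while the dark cloud and cells below come from $D(\alpha)$ itself, the cell $(r+1,c)$ cannot be a snowflake (a snowflake at $(r+1,c)$ forces a dark cloud strictly below, hence $(r,c)$ would be a snowflake too). So $(r+1,c) \in D(\alpha)$, and since $D(\alpha)$ is left-justified, $\alpha_{r+1} \geq c$. Also $(r,c) \notin D(\alpha)$, so $\alpha_r < c \leq \alpha_{r+1}$, giving (i). In fact this argument shows more: the set of columns where row $r+1$ of $\snow(D(\alpha))$ beats row $r$ is exactly $\{\alpha_r + 1, \dots, \alpha_{r+1}\}$ together with snowflake columns, but I will only need that rows $r$ and $r+1$ of $\dark(\alpha)$ behave predictably.

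For (ii), I would argue that the construction of $\snow$ (iterating rows bottom to top, placing a dark cloud at the rightmost column not yet occupied by a dark cloud) interacts with swapping rows $r$ and $r+1$ in a controlled way. The key point is that the set $\overline{B_r}$ of "columns blocked by a dark cloud strictly below row $r$" is the same for $\alpha$ and $s_r\alpha$ below row $r$, hence $\dark(\alpha)$ and $\dark(s_r\alpha)$ agree strictly below row $r+1$. Then I compare what happens at rows $r$ and $r+1$: by (i) we have $\alpha_r < \alpha_{r+1}$, so $(s_r\alpha)_r = \alpha_{r+1} > \alpha_r = (s_r\alpha)_{r+1}$. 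Using the hypothesis that column $c$ is empty in row $r$ of $\snow(D(\alpha))$, the rightmost cell of row $r+1$ of $D(\alpha)$, namely $(r+1,\alpha_{r+1})$, is still available (no dark cloud below it in that column, since otherwise the snowflake-filling would put a cell at $(r,\alpha_{r+1})$ contradicting... — here I need to check $\alpha_{r+1}$ itself is such a column, which follows because $(r+1,\alpha_{r+1})$ being the rightmost cell of its row and sitting in $D(\alpha)$ with nothing to its right means it becomes a dark cloud unless blocked from below, and being blocked from below would fill $(r,\alpha_{r+1})$ as a snowflake — this is exactly the kind of case analysis the hypothesis rules out). So in both $\snow(D(\alpha))$ and $\snow(D(s_r\alpha))$ the dark clouds on rows $r,r+1$ are at columns $\alpha_r, \alpha_{r+1}$ (in the appropriate rows), i.e.\ they get swapped. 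Combined with agreement below row $r+1$ and Lemma~\ref{L: dark recovers snow}, this gives (ii).

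For (iii), let $\gamma \sim \alpha$. By Proposition~\ref{P: equivalence}, $\dark(\gamma) = \dark(\alpha)$, and by Lemma~\ref{L: dark recovers snow} the underlying diagrams of $\snow(D(\gamma))$ and $\snow(D(\alpha))$ coincide; in particular the very hypothesis of the lemma holds verbatim for $\gamma$, so part (i) applied to $\gamma$ gives $\gamma_{r+1} > \gamma_r$. Now apply part (ii) to both $\alpha$ and $\gamma$: $\dark(s_r\alpha)$ is $\dark(\alpha)$ with rows $r,r+1$ swapped, and $\dark(s_r\gamma)$ is $\dark(\gamma)$ with rows $r,r+1$ swapped; since $\dark(\alpha) = \dark(\gamma)$, we get $\dark(s_r\alpha) = \dark(s_r\gamma)$, hence $s_r\alpha \sim s_r\gamma$ by Proposition~\ref{P: equivalence}.

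The main obstacle I anticipate is part (ii): one must verify carefully that the hypothesis "$(r,c)\notin\snow(D(\alpha))$ while $(r+1,c)\in\snow(D(\alpha))$ for some $c$" really does force the rightmost cells of rows $r$ and $r+1$ of $D(\alpha)$ to become dark clouds (so that swapping the rows merely swaps these two dark clouds and leaves all others untouched), and that no dark cloud elsewhere is disturbed. The degenerate situations to watch are when $\alpha_r = 0$, when a dark cloud of some other row sits in column $\alpha_r$ or $\alpha_{r+1}$ strictly below, and when the bottom-to-top sweep could in principle reassign a dark cloud in a higher row after the swap — the condition on $c$ is exactly what is needed to exclude the problematic reassignments, so the proof will consist of turning that condition into these exclusions via Remark~\ref{R: Basic of dark} and the column description of $\snow$.
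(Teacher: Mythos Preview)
Your treatment of (i) and (iii) is correct and matches the paper's argument essentially verbatim.

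Part (ii), however, has a genuine gap. You assert that ``in both $\snow(D(\alpha))$ and $\snow(D(s_r\alpha))$ the dark clouds on rows $r,r+1$ are at columns $\alpha_r,\alpha_{r+1}$,'' and you try to justify this by saying that if $(r+1,\alpha_{r+1})$ were blocked from below then $(r,\alpha_{r+1})$ would be a snowflake, which ``the hypothesis rules out.'' But the hypothesis only says $(r,c)\notin\snow(D(\alpha))$ for \emph{some} $c$, not for $c=\alpha_{r+1}$. A concrete counterexample: take $\alpha=(0,3,0,3)$ and $r=1$. Then the hypothesis holds with $c=1$, yet the dark cloud in row $2$ of $\snow(D(\alpha))$ sits at column $2$, not at column $\alpha_2=3$ (because column $3$ is blocked by the dark cloud at $(4,3)$, and indeed $(1,3)$ \emph{is} a snowflake). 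Likewise row $1$ may have no dark cloud at all even when $\alpha_r>0$: try $\alpha=(1,3,1)$ with $r=1$.

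What the paper actually proves (and what you need) is the weaker but sufficient fact: the dark cloud in row $r+1$ lies in some column $c'>c>\alpha_r$ (via Remark~\ref{R: Basic of dark} applied to the unlabeled cell $(r+1,c)$), while the dark cloud in row $r$, if any, lies in some column $\leq\alpha_r$ (since $\alpha_r<c$). This separation $c''\leq\alpha_r<c'$ is exactly what makes the bottom-to-top sweep produce the same two columns in swapped rows when you pass to $s_r\alpha$, and leaves the set of occupied columns below row $r$ unchanged for the rows above. Your proposal already contains all the ingredients for this argument; you just need to replace the false pinpointing at $\alpha_r,\alpha_{r+1}$ with this inequality.
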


\begin{proof}
Since $(r,c)$ is not in $\snow(D(\alpha))$,
we can deduce two facts:
\begin{enumerate}
\item There are no dark clouds under row $r$ in column $c$, and 
\item $\alpha_r < c$.
\end{enumerate}

By (1), the cell $(r+1,c)$ in $\snow(D(\alpha))$ 
is not a dark cloud or a snowflake.
Thus, it is unlabeled and $(r+1,c) \in D(\alpha)$.
By Remark~\ref{R: Basic of dark},
there must be a $c' > c$
such that $(r+1, c')$ is a dark cloud 
in $\snow(D(\alpha))$.
This implies $\alpha_{r+1} > c$.
By (2), 
we have $\alpha_{r+1} > \alpha_r$, 
proving (i).
Also by (2), 
the dark cloud in row $r$ of 
$\snow(D(\alpha))$,
if exists, 
is in the first $c-1$ columns. 
Thus, $\dark(s_r\alpha)$ is obtained
from $\dark(\alpha)$ by switching 
row $r$ and row $r+1$, proving (ii).

Now consider any  $\gamma \sim \alpha$.
By Proposition~\ref{P: equivalence},
$\snow(D(\gamma))$ and $\snow(D(\alpha))$
have the same underlying diagram.
By (ii), 
$\dark(s_r\gamma)$ is obtained
from $\dark(\gamma)$ by switching 
row $r$ and row $r+1$.
Since $\dark(\alpha) = \dark(\gamma)$,
we have $\dark(s_r\alpha) = \dark(s_r\gamma)$,
so $s_r \alpha \sim s_r \gamma$.
\end{proof}

These two consequences of $\alpha \sim \gamma$
allow us to prove the last main Lemma. 

\LasD*

\begin{proof}
By Lemma~\ref{L: snowy weak composition in equivalence class} 
it is enough to assume $\gamma$ is snowy, and we proceed by induction on $\raj(\alpha)$.
The base case is $\raj(\alpha) = 0$,
which implies $\alpha$ only has 0s. 
Our claim is immediate. 

Now assume $\raj(\alpha) > 0$.
Consider the diagram $\overline{D(\alpha)}$.
Clearly, the underlying diagram of any $K$-Kohnert diagram
of $\alpha$ will be a subset of $\overline{D(\alpha)}$.
In other words, any monomial in $\topLas_{\alpha}$
must divide $x^{\wt(\overline{D(\alpha)})}$.

If the underlying diagram of $\snow(D(\alpha))$
is $\overline{D(\alpha)}$, 
then $x^{\wt(\overline{D(\alpha)})}$ is the only monomial
in $\topLas_{\alpha}$.
On the other hand, 
Corollary~\ref{C: sim implies same overline} gives
$\overline{D(\alpha)} = \overline{D(\gamma)}$.
By the same argument, $x^{\wt(\overline{D(\alpha)})}$ is the only monomial
in $\topLas_{\gamma}$.
Our claim holds. 

Otherwise, we can find $(r,c) \in \overline{D(\alpha)}$
but not in $\snow(D(\alpha))$.
Choose the $(r,c)$ with the largest $r$.
First, we know $(r,c) \notin D(\alpha)$,
which implies $(r+1, c) \in \overline{D(\alpha)}$.
By the maximality of $r$,
$(r+1, c)$ is in $\snow(D(\alpha))$.
We invoke Lemma~\ref{L: s_i fix sim}
and conclude $\alpha_{r+1} > \alpha_{r}$,
$\gamma_{r+1} > \gamma_{r}$
and $s_i \alpha \sim s_i \gamma$.
Since $\gamma$ is snowy,
by Corollary~\ref{C: snowy changes},
we know $\raj(s_r \gamma) = \raj(\gamma) - 1$,
which implies $\raj(s_r \alpha) = \raj(\alpha) - 1$.
By our inductive hypothesis, 
$\topLas_{s_r \alpha} = c \topLas_{s_r \gamma}$
for some $c \neq 0$.

We may write 
$\fLb_{s_r \alpha}$ 
as $\beta^{\raj(s_r \alpha) - |\alpha|}\topLas_{s_r \alpha} + g$,
where $g$ has $\beta$-degree less than  
$\raj(s_r \alpha) - |\alpha|$.
Then 
\begin{align*}
\fLb_\alpha 
& = \pi_i(\fLb_{s_r \alpha}) + \beta \pi_i(x_{i+1}\fLb_{s_r \alpha})\\
& = \pi_i(\fLb_{s_r \alpha}) + \beta \pi_i(x_{i+1} g) + \beta^{\raj(\alpha) - |\alpha|} \pi_i(x_{i+1} \topLas_{s_r \alpha})    
\end{align*}

The first two terms on the right-hand side have $\beta$ degree 
less than $\raj(\alpha) - |\alpha|$.
Thus, the $\beta$-degree in $\fLb_\alpha$ is at most $\raj(\alpha) - |\alpha|$.
By Lemma~\ref{L: Top Las 1},
the $\beta$-degree in $\fLb_\alpha$ is $\raj(\alpha) - |\alpha|$.
Extract the coefficient of $\beta^{\raj(\alpha) - |\alpha|}$ and get
$$
\topLas_\alpha = \pi_i(x_{i+1} \topLas_{s_r \alpha}) = c\pi_i(x_{i+1} \topLas_{s_r \gamma}) = c\topLas_{\gamma},
$$
by Lemma~\ref{L: recursive top Las}.
\end{proof}

\section{Snow diagrams for Rothe diagrams}
\label{S: permutations}
Fix an $n \in \mathbb{Z}_{>0}$
throughout this section. 
We move on to study the snow diagrams 
of $RD(w)$ for $w \in S_n$. 
In subsection~\ref{SS: insertion},
we recall a version of Schensted insertion on $S_n$.
In subection~\ref{subsec.raj.rothe}, we 
show the positions of dark clouds in $\snow(RD(w))$ 
is related to the Schested insertion.
We then use this connection to 
prove that $\rajcode(RD(w))$ is consistent 
to the $\rajcode(w)$ defined in~\cite{PSW}.
In Section~\ref{subsec.shadow.diagram}, we show the dark clouds
in $\snow(RD(w))$ corresponds to
the turning points in the shadow diagram for $w$.
In Section~\ref{subsec.inverse.fireworks}, we study the
snow diagrams for inverse fireworks permutations.

\subsection{The Schensted Insertion}
\label{SS: insertion}

If a diagram is top-justified and left-justified,
we say it is a \definition{Young diagram}.
A filling of a Young diagram with positive integers
is called a \definition{tableau}.
A tableau is called \definition{partial}
if it contains distinct numbers and 
each row (resp. column) is decreasing from left to right 
(resp. top to bottom).
Notice that usually in literature, columns and rows 
are increasing. 
We reverse the convention to make our results easier to state.

The Schensted insertion~\cite{Sch}
is an algorithm defined on a partial tableau $T$ and 
a positive number $x$ that is not in $T$.
It finds the largest $x'$
in the first row of $T$ 
such that $x > x'$.
\begin{itemize}
    \item If such $x'$ does not exist,
it appends $x$ at the end of row one
and terminates.
    \item Otherwise, it replaces $x'$ by $x$
and insert $x'$ to the next row
in the same way. 
\end{itemize}
When the algorithm terminates, 
the resulting partial tableau is the output. 

For $w \in S_n$, we insert $w(n), w(n-1), \dots, w(1)$ to the empty tableau via the Schensted insertion and denote the result by $P(w)$. 

\begin{exa}
\label{E: Schensted}
Take $w \in S_7$ with one-line notation $3721564$.
The Schensted insertion on $w$ yields: 

\[
\begin{ytableau}
4
\end{ytableau}
\rightarrow
\begin{ytableau}
6 \\
4
\end{ytableau}
\rightarrow
\begin{ytableau}
6 & 5\\
4
\end{ytableau}
\rightarrow
\begin{ytableau}
6 & 5 & 1\\
4
\end{ytableau}
\rightarrow
\begin{ytableau}
6 & 5 & 2\\
4 & 1
\end{ytableau}
\rightarrow
\begin{ytableau}
7 & 5 & 2\\
6 & 1\\
4
\end{ytableau}
\rightarrow
\begin{ytableau}
7 & 5 & 3\\
6 & 2\\
4 & 1
\end{ytableau}\,.
\]
\end{exa}

One classical application of the Schensted insertion
is to study increasing subsequences in a permutation.
Recall $\textup{LIS}^{w}(q)$ is the 
length of the longest increasing subsequence 
of $w\in S_n$ that starts with $q$.
It is related to the Schensted insertion as follows.

\begin{lemma} \cite[Lemma 3.3.3]{Sagan}\label{lem: c equals LIS}
Take $w\in S_n$ and
perform the Schensted insertion on $w$.
For any $r \in [n]$,
when $w(r)$ is inserted, 
it goes to column $\textup{LIS}^w(w(r))$ in row one.
\end{lemma}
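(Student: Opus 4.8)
The plan is to recognize this as the reversed-convention form of Schensted's classical row-insertion lemma (equivalently, the patience-sorting description of longest increasing subsequences), and to prove it by a single induction that tracks the entire first row rather than just the landing cell. Because the paper inserts the values in the order $w(n), w(n-1), \dots, w(1)$ and uses \emph{decreasing} rows and columns, the relevant subsequences are increasing subsequences that \emph{start} at a given value, which is exactly what $\textup{LIS}^w(\cdot)$ records.

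First I would set up the bookkeeping invariant. Process the positions $r = n, n-1, \dots, 1$ in the order of insertion, and after $w(n), \dots, w(r)$ have been inserted, write $v_1 > v_2 > \cdots$ for the entries of the first row read left to right (they are strictly decreasing by the partial-tableau condition). The invariant I would carry is: for each occupied column $j$, the entry $v_j$ is the \emph{largest} value $v \in \{w(r), \dots, w(n)\}$ such that $w$ has an increasing subsequence of length $j$ that starts at $v$ and uses only positions in $\{r, \dots, n\}$. The base case (empty row) is vacuous, and the column-$1$ entry being the largest inserted value is the length-$1$ instance.

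Next I would run the inductive step, which simultaneously yields the lemma. Inserting $w(r)$ sends it to the column of the leftmost first-row entry smaller than $w(r)$ (or appends it when none exists), so its landing column is $j^\ast := \min\{j : v_j < w(r)\}$. Writing $\ell := \textup{LIS}^w(w(r))$, the key observation is that there is an increasing subsequence of length $j$ starting at some value exceeding $w(r)$ (using positions $> r$) if and only if $v_j > w(r)$, by the ``largest value'' clause of the invariant; consequently $\ell = 1 + \max\{j : v_j > w(r)\}$. Since the $v_j$ are strictly decreasing and $w(r)$ is not among the already-inserted values ($w$ being a permutation), the set $\{j : v_j > w(r)\}$ is a prefix $\{1, \dots, \ell-1\}$, giving $j^\ast = \ell$; the degenerate case where no entry is smaller than $w(r)$ corresponds to $\ell$ exceeding the current row length, that is, an append. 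Finally I would verify the invariant is restored: $w(r)$ admits starting increasing subsequences of every length $1, \dots, \ell$, it can only improve the column-$\ell$ record (since $v_\ell < w(r)$ while $v_j > w(r)$ for $j < \ell$), and it admits none of length $> \ell$, matching exactly the single replacement performed in column $\ell$.

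The only delicate point is the translation of conventions: one must keep straight that reversing both the insertion order and the monotonicity turns the usual ``longest increasing subsequence ending at a position'' statement into ``starting at a value,'' and correspondingly flip every inequality in the invariant. Once the invariant is phrased with the correct ``largest value / starts at'' wording, the strictly decreasing first row makes the prefix argument for $j^\ast = \ell$ immediate, and no genuinely hard estimate remains. Since the result is standard, an alternative is simply to cite \cite[Lemma 3.3.3]{Sagan} after noting the convention reversal, but the induction above makes the paper self-contained.
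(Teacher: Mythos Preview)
The paper does not give its own proof of this lemma; it is simply quoted from \cite[Lemma 3.3.3]{Sagan}. Your inductive argument---maintaining the invariant that the column-$j$ entry of the first row is the largest value among $\{w(r),\dots,w(n)\}$ that starts a length-$j$ increasing subsequence---is correct and is precisely Sagan's standard proof translated to the reversed insertion order and decreasing-row convention used here, so your final remark (that one may simply cite Sagan after noting the convention flip) already matches exactly what the paper does.
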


\begin{exa}
Consider the $w \in S_7$ in Example~\ref{E: Schensted}.
Notice that $\textup{LIS}^w(w(4)) = 3$.
When $w(4) = 1$ is inserted to row one, 
it indeed goes to column 3. 
\end{exa}

\subsection{Rajcode of Rothe diagrams}
\label{subsec.raj.rothe}

We show
that $\rajcode(w)$ defined by Pechenik, Speyer and Weigandt
(see Definition~\ref{D: raj on permutations})
agrees with the $\rajcode(RD(w))$ (see Definition~\ref{defn.raj}). 
To do so, 
we need a better understanding of $\snow(RD(w))$. 
We start by describing how the positions of dark clouds in $\snow(RD(w))$ 
are related to the Schensted insertion described in Subsection~\ref{SS: insertion}.

\begin{prop}\label{prop: rsk and dark clouds}
Take $w \in S_n$.
Consider the Schensted insertion on $w$.
The dark cloud in row $r$ of $\snow(RD(w))$ can be described 
based on the insertion of $w(r)$.
\begin{enumerate}
\item If $w(r)$ is appended to the end of row one, 
then there is no dark cloud in the $r$\textsuperscript{th} row of $\snow(RD(w))$;
\item If $w(r)$ bumps $c$ in row one, 
then $(r, c)$ is a dark cloud in $\snow(RD(w))$.
\end{enumerate}
\end{prop}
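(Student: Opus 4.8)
The plan is to relate both the snow-diagram construction and the Schensted insertion to the same auxiliary data: the multiset of column indices reached in row one during the insertion. Fix $w \in S_n$. Recall that $\snow(RD(w))$ is built by scanning rows from bottom to top; in row $r$ we place a dark cloud in the rightmost cell of $RD(w)$ in row $r$ whose column currently has no dark cloud, if such a cell exists. Meanwhile, the insertion of $w(1), \dots, w(n)$ is processed in the \emph{reverse} order $w(n), \dots, w(1)$, so the insertion step for $w(r)$ happens exactly when the snow scan is about to process row $r$ (both go through $r = n, n-1, \dots, 1$). This synchronization is the backbone of the argument.

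First I would set up the following claim, to be proved by downward induction on $r$: just before the snow construction processes row $r$, the set of columns already occupied by dark clouds equals the set of columns in row one of the current partial tableau $P$ (the one obtained after inserting $w(n), \dots, w(r+1)$) whose entries are \emph{less than} $w(r)$ — equivalently, by Lemma~\ref{lem: c equals LIS} and the structure of row one (decreasing left to right), these are columns $1, 2, \dots, \textup{LIS}^{w}(w(r)) - 1$ if $w(r)$ bumps, and columns $1, \dots, \text{len(row one)}$ if $w(r)$ is appended. More precisely, I want to identify, for each $r$, the columns holding dark clouds in rows $> r$ with the columns of row one of the partial tableau whose entries have already been "passed over". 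Then the rightmost cell of $RD(w)$ in row $r$ with a dark-cloud-free column should correspond precisely to the column that $w(r)$ lands in when it hits row one.

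Concretely, the key structural fact I would isolate is a description of row $r$ of $RD(w)$: the cells of $RD(w)$ in row $r$ sit in columns $\{w(r') : r' > r,\ w(r') < w(r)\}$, i.e. the values to the right of position $r$ in the one-line notation that are smaller than $w(r)$ — and these are exactly the values that get "bumped past" when $w(r)$ travels down during insertion, which by Lemma~\ref{lem: c equals LIS} governs where $w(r)$ stops in row one. The heart of the matter is then a bookkeeping lemma: as we scan rows bottom-up, the dark clouds placed so far occupy precisely the "prefix" of columns $1, \dots, k$ corresponding to the first-row entries of the partial tableau already built, so the rightmost $RD(w)$-cell in row $r$ with a free column is in column $c$ iff $c = \textup{LIS}^w(w(r))$ and $w(r)$ bumps (case 2), and there is no such cell iff $w(r)$ is appended (case 1). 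I would then invoke Lemma~\ref{lem: c equals LIS} to conclude.

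I expect the main obstacle to be the inductive step verifying that the set of dark-cloud columns after processing row $r$ matches the set of row-one columns of the partial tableau after inserting $w(r)$ — in particular handling the interaction between (i) cells of $RD(w)$ in row $r$ that lie in columns already carrying a dark cloud (these are skipped by the snow rule and correspond to bumps that continue past row one into deeper rows), and (ii) the fact that a newly placed dark cloud may be in a column strictly left of some existing dark cloud, so "prefix of columns" must be argued carefully rather than assumed. Making precise the bijection between "value $w(r')$ with $r' > r$, $w(r') < w(r)$, lying in a column already used" and "an element currently in row one of the tableau that is $< w(r)$" is where the real work lies; once that dictionary is established, both parts of the proposition fall out immediately.
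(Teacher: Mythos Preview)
Your overall architecture is right—downward induction on $r$, synchronizing the bottom-up snow scan with the reverse-order insertion—and this is exactly how the paper proceeds. But your inductive invariant is misformulated because it conflates two different notions of ``column'': the columns of $RD(w)$ are indexed by \emph{values} $w(r')\in[n]$, whereas the columns you discuss in the partial tableau $P$ are \emph{positions}. The dark cloud in row $r$ sits at the column whose index is the value $c$ that $w(r)$ bumps out of row one, not at the tableau position $\textup{LIS}^w(w(r))$ where $w(r)$ lands; so the assertion ``the rightmost free column is column $\textup{LIS}^w(w(r))$'' is simply false (e.g.\ for $w=3721564$ and $r=6$ the dark cloud is at column $4$, while $\textup{LIS}^w(6)=1$), and Lemma~\ref{lem: c equals LIS} is not the right tool here.

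The correct invariant, and the one the paper uses, is: after processing rows $r+1,\dots,n$, for each $r'>r$ the value $w(r')$ lies in row one of $P$ if and only if column $w(r')$ of $\snow(RD(w))$ carries no dark cloud below row $r$. Note this does not mention $w(r)$ at all, and no ``prefix of columns'' structure is needed (none exists in general). Both cases then follow in a few lines. If $w(r)$ is appended, any purported dark cloud $(r,w(r'))$ forces $w(r')<w(r)$ and, by the invariant, $w(r')$ sits in row one of $P$, contradicting the append. If $w(r)$ bumps $w(r')$, then $(r,w(r'))\in RD(w)$ and its column is free by the invariant; any cell $(r,w(r''))$ strictly to its right has $w(r')<w(r'')<w(r)$, so if column $w(r'')$ were free then $w(r'')$ would be in row one and $w(r)$ would have bumped it instead. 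Remark~\ref{R: Basic of dark} then finishes the argument.
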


\begin{exa}
Let $w \in S_7$ with one-line notation $3721564$.
Consider the corresponding Rothe diagram $RD(3721564)$ and its snow diagram:
\[
\begin{ytableau}
\none[1] & \cdot & \cdot \cr
\none[2] & \cdot & \cdot & \none & \cdot & \cdot & \cdot \cr
\none[3] & \cdot \cr
\none[4] \cr
\none[5] & \none & \none & \none & \cdot \cr
\none[6] & \none & \none & \none & \cdot \cr
\none[7]
\end{ytableau}
\quad\quad
\begin{ytableau}
\none[1] & \cdot & \bullet& \none & \snowflake & \none & \snowflake \cr
\none[2] & \cdot & \cdot & \none & \cdot & \cdot & \bullet \cr
\none[3] & \bullet & \none & \none & \snowflake \cr
\none[4] & \none & \none & \none & \snowflake \cr
\none[5] & \none & \none & \none & \cdot \cr
\none[6] & \none & \none & \none & \bullet \cr
\none[7]
\end{ytableau}
\]

The Schensted insertion of $w$
is presented in Example~\ref{E: Schensted}.
We check Proposition~\ref{prop: rsk and dark clouds} in the table below.
\begin{center}
\begin{tabular}{|c |c|c|c|} 
\hline
 $r$ & $w(r)$ & insertion of $w(r)$
 in row one & position of $\bullet$
 in row $r$ of $\snow(RD(w))$\\
 \hline
 $7$ & $4$ & appended at the end of row one & row $7$ has no $\bullet$ \\ 
 \hline
 $6$ & $6$ & bumps $4$ in row one & row $6$ has $\bullet$ at $(6,4)$ \\ 
  \hline
 $5$ & $5$ & appended at the end of row one & row $5$ has no dark cloud \\ 
  \hline
 $4$ & $1$ & appended at the end of row one & row $4$ has no dark cloud \\ 
  \hline
 $3$ & $2$ & bumps $1$ in row one & row $3$ has $\bullet$ at $(3,1)$  \\ 
  \hline
 $2$ & $7$ & bumps $6$ in row one & row $2$ has $\bullet$ at $(2,6)$ \\ 
  \hline
 $1$ & $3$ & bumps $2$ in row one & row $1$ has $\bullet$ at $(1,2)$  \\ 
 \hline
\end{tabular}
\end{center}
\end{exa}

\begin{proof}
We prove the statement by induction on $r$ starting from $r=n$.
The number $w(n)$ is inserted into the empty tableau. 
In this case, it is appended to the end of the first row. 
It is also clear that there can not be any dark cloud on row $n$ of $\snow(RD(w))$.

Now suppose the statement holds for $r+1,r+2,\dots,n$ for some $r \leqslant n-1$. 
Let $P$ be the tableau right before the insertion of $w(r)$.
By the inductive hypothesis, for each $r' > r$, $w(r')$ appears in row 1 of $P$
if and only if there is no dark cloud in column $w(r')$ under row $r$ of $\snow(RD(w))$.
Now consider the insertion of $w(r)$.
\begin{enumerate}
\item Case 1: $w(r)$ is appended to the end of row 1. 

Assume toward contradiction that $(r, w(r'))$ is a dark cloud of $\snow(RD(w))$ for some $r' > r$. 
Then $w(r) > w(r')$. 
Moreover, there is no dark cloud in the column of $w(r')$ under row $r$,
so $w(r')$ is in row $1$ of $P$. 
Thus, $w(r)$ cannot be appended in row $1$, a contradiction.
\item Case 2: $w(r)$ bumps $w(r')$ in row 1 for some $r' > r$.

Then $w(r) > w(r')$.
The cell $(r, w(r'))$ is in $RD(w)$. We need to show that it is a dark cloud in $\snow(RD(w))$.
By Remark~\ref{R: Basic of dark}, we just need to make sure
there is no dark cloud under it or on its right.

Suppose that there is a dark cloud in column $w(r')$ under row $r$. By the inductive hypothesis, $w(r')$ cannot appear in row 1 of $P$, which is a contradiction.

Finally, suppose there is a dark cloud on the right of $(r, w(r'))$.
We may write this dark cloud as $(r, w(r''))$ with $w(r'') > w(r')$.
Since it is a cell in $RD(w)$,
we also have $r'' > r$ and $w(r) > w(r'')$.
Since it is a dark cloud, there is no dark cloud under it.
By the inductive hypothesis, $w(r'')$ is in row 1 of $P$.
This is a contradiction: $w(r)$ should bump $w(r'')$ instead of $w(r')$ 
since $w(r) > w(r'') > w(r')$. \qedhere
\end{enumerate}
\end{proof}

\begin{ourthm}
\label{T: equivalence of rajcode on perm}
For $w \in S_n$, 
$\rajcode(w) = \rajcode(RD(w))$.
\end{ourthm}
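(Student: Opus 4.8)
The plan is to compute both sides row by row and match them. Fix $w \in S_n$ and a row $r \in [n]$. By Definition~\ref{defn.raj}, $\rajcode(RD(w))_r$ is the number of cells in row $r$ of $\snow(RD(w))$, so I must show this equals $\rajcode(w)_r = n+1-r-\textup{LIS}^w(w(r))$. The key structural input is Proposition~\ref{prop: rsk and dark clouds}, which tells us exactly where the dark cloud in row $r$ sits (if it exists) in terms of the Schensted insertion of $w(r)$, together with Lemma~\ref{lem: c equals LIS}, which identifies the relevant column index as $\textup{LIS}^w(w(r))$.

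First I would recall how $\snow(RD(w))$ is built: processing rows bottom-to-top, in row $r$ we place a dark cloud at the rightmost cell of $RD(w)$ in a column with no dark cloud yet below, and then fill every cell strictly above that dark cloud (within rows $1,\dots,r-1$) that is not already in $RD(w)$ with a snowflake. So row $r$ of $\snow(RD(w))$ consists of: the cells of $RD(w)$ in row $r$, plus the snowflakes created in row $r$ when processing rows $r+1, r+2, \dots, n$. A snowflake lands in row $r$, column $c$ exactly when some later row $r' > r$ received its dark cloud in column $c$ and $(r,c) \notin RD(w)$. By Proposition~\ref{prop: rsk and dark clouds}, the columns hosting dark clouds in rows $r+1,\dots,n$ are precisely the columns $w(r')$ that get bumped in row one during the insertions of $w(r'), r' > r$ — equivalently, by Lemma~\ref{lem: c equals LIS}, the columns $\textup{LIS}^w(w(r'))$ for those $r'$ whose $w(r')$ is not appended. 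I would then argue that the cells of row $r$ of $\snow(RD(w))$ fill exactly columns $1, 2, \dots, M_r$ for some $M_r$ (a left-justification / no-gaps claim that I expect follows from the lemmas in \S\ref{S: weak composition}, e.g.\ the recovery of $\snow$ from $\dark$ via Lemma~\ref{L: dark recovers snow}, since $\snow(RD(w))$ is the union over dark clouds $(r',c')$ of $([r']\times\{c'\}) \cup (\{r'\}\times[c'])$ — the same formula holds for Rothe diagrams by the same proof). Granting this, $\rajcode(RD(w))_r = M_r$ is just the largest column index appearing in row $r$.

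It remains to identify $M_r$ with $n+1-r-\textup{LIS}^w(w(r))$. Let $P$ be the partial tableau just before $w(r)$ is inserted; its first row has $n-r$ entries, namely the values $w(r')$ with $r' > r$ that survived in row one, and $w(r)$ gets inserted into column $\textup{LIS}^w(w(r))$ of that row by Lemma~\ref{lem: c equals LIS}. The dark clouds strictly below row $r$ occupy columns equal to the positions, in row one of successive tableaux, where bumps occurred; a clean way to count $M_r$ is to observe that the columns of row $r$ of $\snow(RD(w))$ that lie strictly to the right of $RD(w)$'s row-$r$ cells are exactly the columns $c > (\text{width of } RD(w) \text{ in row } r)$ such that some dark cloud sits below $(r,c)$, and to track how row one of $P$ evolves. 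I expect the cleanest route is a direct induction: assuming the claim for $r+1$, analyze the two cases of Proposition~\ref{prop: rsk and dark clouds} for the insertion of $w(r+1)$ and show both $M_r$ and $n+1-r-\textup{LIS}^w(w(r))$ change in the same way, using that $\textup{LIS}^w(w(r))$ relates to $\textup{LIS}^w(w(r+1))$ through whether $w(r+1) > w(r)$ or not and where $w(r+1)$ bumped.

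The main obstacle I anticipate is the bookkeeping in this last step: precisely relating the set of columns occupied by dark clouds below row $r$ to the combinatorics of the longest increasing subsequence, since the dark-cloud columns are a snapshot of row one of the insertion tableau at a particular time, and I need to convert ``number of dark-cloud columns that force snowflakes into row $r$'' into the single number $\textup{LIS}^w(w(r))$. I would handle this by working with the explicit formula $\snow(RD(w)) = \bigcup_{(r',c') \in \dark(RD(w))}\big(([r']\times\{c'\}) \cup (\{r'\}\times[c'])\big)$ (the Rothe-diagram analogue of Lemma~\ref{L: dark recovers snow}), so that row $r$ of $\snow(RD(w))$ is $\{1,\dots,\max(a_r, b_r)\}$ where $a_r$ is the rightmost $RD(w)$-cell in row $r$ and $b_r = \max\{c' : (r',c') \in \dark(RD(w)), r' > r\}$, and then separately compute $\max(a_r,b_r)$ from the insertion data; showing this maximum equals $n+1-r-\textup{LIS}^w(w(r))$ should reduce to a counting identity about how many values among $w(r+1),\dots,w(n)$ end up in row one versus below, which is exactly the content of the RSK correspondence restricted to first-row behavior.
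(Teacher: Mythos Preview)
Your proposal contains a genuine gap: the claim that row $r$ of $\snow(RD(w))$ is an interval $\{1,\dots,M_r\}$ is false for Rothe diagrams, and the formula from Lemma~\ref{L: dark recovers snow} does \emph{not} carry over. That lemma's proof uses the fact that $D(\alpha)$ is left-justified (so a dark cloud at $(r',c')$ forces $\{r'\}\times[c']\subseteq D(\alpha)$); Rothe diagrams are not left-justified, and indeed neither containment holds. In the paper's running example $w=3721564$, row $2$ of $RD(w)$ has cells in columns $1,2,4,5,6$ (column $3$ is missing because $w(1)=3$), the dark cloud sits at $(2,6)$, and row $2$ of $\snow(RD(w))$ still has exactly the five cells $1,2,4,5,6$: it is not the interval $[6]$. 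Similarly, row $1$ of $\snow(RD(w))$ occupies columns $1,2,4,6$ only. So $\rajcode(RD(w))_r$ is a \emph{count} of cells, not a maximum column index, and your quantity $\max(a_r,b_r)$ does not compute it. The induction you outline in the last two paragraphs is built on this false structural claim and would not go through.

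The paper's proof avoids any structural description of which columns are occupied. Instead it writes $\rajcode(RD(w))_r = \invcode(w)_r + d_r$, where $\invcode(w)_r$ counts the original Rothe cells in row $r$ and $d_r$ counts snowflakes (equivalently, dark clouds strictly southeast of $(r,w(r))$). It then introduces the quantity $A=\#\{\text{entries of }P\text{ larger than }w(r)\}$, where $P$ is the tableau just before inserting $w(r)$, and computes $A$ two ways: once as $n-r-\invcode(w)_r$ (all entries of $P$ minus those smaller than $w(r)$), and once as $(\textup{LIS}^w(w(r))-1)+d_r$ (entries of row one left of the insertion column, plus entries below row one larger than $w(r)$, the latter identified with $d_r$ via Proposition~\ref{prop: rsk and dark clouds}). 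Equating the two expressions for $A$ gives $\invcode(w)_r+d_r = n+1-r-\textup{LIS}^w(w(r))$ directly, with no induction needed. The step you should salvage from your proposal is the decomposition of row $r$ into Rothe cells plus snowflakes; what you are missing is this double-counting trick on $A$.
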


\begin{proof}
Take $r \in [n]$.
Consider row $r$ of $\snow(RD(w))$.
It contains $\invcode(w)_r$ cells that are not snowflakes. 
Let $d_r$ be the number of dark clouds in $\snow(RD(w))$ 
that are southeast of $(r,w(r))$.
Clearly, $d_r$ is also the number of snowflakes in row $r$ of $\snow(RD(w))$.
We have $\rajcode(RD(w))_r = \invcode(w)_r + d_r$.

Consider the Schensted insertion of $w$.
Let $P$ be the tableau right before the insertion of $w(r)$.
Define $A$ as the number of elements
in $P$ that are larger than $w(r)$.
We compute $A$ in two ways. 
\begin{itemize}
\item The tableau $P$
consists of numbers 
$w(r+1), \dots, w(n)$.
There are $\invcode(w)_r$ of them 
less than $w(r)$,
so $A = n - r - \invcode(w)_r$.
\item
Assume when inserting $w(r)$ to $P$,
it goes to column $c$ of row 1. 
Thus,  
$c-1$ is the number of entries in row 1
of $P$ that are larger than $w(r)$.
By Proposition~\ref{prop: rsk and dark clouds},
$d_r$ is the number of entries 
under row 1 of $P$
that are larger than $w(r)$.
We have $A = c - 1 + d_r$.
By Lemma~\ref{lem: c equals LIS},
$c = \textup{LIS}^w(w(r))$,
so $A = \textup{LIS}^w(w(r)) - 1 + d_r$.
\end{itemize}

Combining the two expressions of $A$ 
yields
$$
 n - r - \invcode(w)_r = \textup{LIS}^w(w(r)) - 1 + d_r, \textrm{ so}
$$
$$
\rajcode(RD(w))_r = \invcode(w)_r + d_r
= n - r + 1 - \textup{LIS}^w(w(r))
= \rajcode(w)_r.
$$
\end{proof}

\subsection{Dark Clouds of the Rothe Diagram via Viennot's geometric construction}
\label{subsec.shadow.diagram}
In 1977, Xavier G{\'e}rard Viennot gave a diagrammatic construction of the RSK correspondence 
in terms of shadow lines (\cite{viennot}). It is also known as the matrix-ball construction. We will show that the dark clouds in the snow diagram of a permutation can be obtained via Viennot's geometric construction. 
We denote $\Rowone{w}$ to be the first row of the tableau obtained by Schensted insertion on $w$.

For two cells $(i,j), (m,n) \in \NN\times \NN$, 
$(m,n)$ lies in the shadow of $(i,j)$ if and only if $m \leq i$ and $n \leq j$. This can be visualized by imagining shedding light from the Southeast. 
\footnote{The usual convention can be thought of shedding light from the Northwest, which corresponds to the usual Schensted insertion. We reverse the direction to match our decreasing Schensted insertion convention.}
To obtain the \definition{shadow diagram} of $w\in S_n$, consider the points $(1,w(1)),\dots, (n,w(n))$. Let $\left(i^{(1)}_1,w(i^{(1)}_1)\right),\dots,\left(i^{(1)}_{\ell_{1}},w(i^{(1)}_{\ell_1})\right)$ be the points that are not in the shadow of any other point for some $\ell_1 \geqslant 1$ and $i^{(1)}_1> i^{(1)}_2>\dots > i^{(1)}_{\ell_1}$. Then the first \definition{shadow line} $L_1(w)$ is the boundary of the combined shadows of the points $\left(i^{(1)}_1,w(i^{(1)}_1)\right),\dots,\left(i^{(1)}_{\ell_1},w(i^{(1)}_{\ell_1})\right)$. The rest of the $L_j(w)$ can be constructed recursively. Supposed $L_1,\dots,L_{j-1}$ have been constructed, remove all points in the set $$\left\{ \left(i^{(p)}_{k},w(i^{(p)}_k)\right): 1\leqslant p \leqslant j-1, 1\leqslant k \leqslant \ell_p \right\},$$ then $L_j$ is the boundary of the shadow of the remaining points of the points left, which we label as 
\[
\left(i^{(j)}_1,w(i^{(j)}_1)\right),\dots \left(i^{(j)}_{\ell_j},w(i^{(j)}_{\ell_j})\right),
\]
for some $\ell_j \geqslant 1$ and $i^{(j)}_1>i^{(j)}_2>\dots>i^{(j)}_{\ell_j}$. Once there is no point left, the shadow lines we obtained form the shadow diagram for $w$.

\begin{ourthm}[\cite{viennot}]
Given $w\in S_n$ and suppose $L_1,\dots,L_s$ are the shadow lines obtained from $w$ until there is no point left. Then $s$ equals the size of $\Rowone{w}$.
\end{ourthm}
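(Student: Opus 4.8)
The plan is to identify, for each plotted point $(i,w(i))$, which shadow line it lies on, and to express this purely in terms of longest increasing subsequences; the number of shadow lines then matches the size of $\Rowone{w}$ via Lemma~\ref{lem: c equals LIS}.

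The heart of the argument is the following claim: for every $i\in[n]$, the point $(i,w(i))$ lies on the shadow line $L_j$ with $j=\textup{LIS}^w(w(i))$; equivalently, the points of $L_j$ are exactly those $(i,w(i))$ with $\textup{LIS}^w(w(i))=j$. I would prove this by induction on $j$. Throughout, note that because the light comes from the southeast and $w$ is injective, a plotted point $(i,w(i))$ is dominated by another plotted point precisely when there is $i'>i$ with $w(i')>w(i)$. For $j=1$: the points of $L_1$ are exactly those dominated by no other point, which by the previous sentence are exactly the $(i,w(i))$ with $\textup{LIS}^w(w(i))=1$. For the inductive step, after removing $L_1,\dots,L_{j-1}$ the surviving plotted points are, by the inductive hypothesis, exactly those with $\textup{LIS}^w(w(i))\ge j$, and $L_j$ is the antichain of maximal survivors. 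If $\textup{LIS}^w(w(i))>j$, then the second term $(i',w(i'))$ of a longest increasing subsequence starting at $i$ is a surviving point (its $\textup{LIS}^w$-value is at least $\textup{LIS}^w(w(i))-1\ge j$) that strictly dominates $(i,w(i))$, so $(i,w(i))\notin L_j$. Conversely, no survivor can strictly dominate a point $(i,w(i))$ with $\textup{LIS}^w(w(i))=j$, since prepending $w(i)$ to a longest increasing subsequence starting at such a dominating survivor would produce an increasing subsequence through $w(i)$ of length exceeding $j$, a contradiction; hence every such point is a maximal survivor, i.e.\ lies on $L_j$. This proves the claim.

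Set $M:=\max_{i\in[n]}\textup{LIS}^w(w(i))$. The set $\{\textup{LIS}^w(w(i)):i\in[n]\}$ equals $\{1,2,\dots,M\}$, since truncating an increasing subsequence realizing a given value produces shorter ones of every intermediate length. Hence by the claim $L_1,\dots,L_M$ are all nonempty while no plotted point remains to form an $(M+1)$st line, so the number of shadow lines is $s=M$. On the other hand, Lemma~\ref{lem: c equals LIS} says that when $w(r)$ is inserted it enters column $\textup{LIS}^w(w(r))$ of row one; since row one only grows by appending at its right end, the final size of $\Rowone{w}$ is the largest column index reached during the insertion, namely $\max_r\textup{LIS}^w(w(r))=M$. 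Therefore $|\Rowone{w}|=s$, as claimed. (Alternatively, one could cite the classical fact that Viennot's construction computes the RSK correspondence, but the route above keeps the section self-contained.)

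The main obstacle I anticipate is the bookkeeping in the central claim: one must carefully translate the ``shadow'' relation into the correct domination order under the paper's southeast-light and decreasing-Schensted conventions, and argue that the points \emph{lying on} $L_j$ are precisely the maximal survivors, not merely the corner points that generate the staircase. Once the conventions are pinned down, both directions of the inductive step are short, so the write-up should be brief.
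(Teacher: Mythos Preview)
The paper does not actually prove this theorem; it is stated with a citation to \cite{viennot} and treated as a classical result, with no argument given. Your proposal supplies a correct, self-contained proof where the paper simply quotes the literature. The induction showing that the plotted points on $L_j$ are exactly those with $\textup{LIS}^w(w(i))=j$ is sound under the paper's southeast-light convention, and the final step matching $M=\max_i\textup{LIS}^w(w(i))$ to $|\Rowone{w}|$ via Lemma~\ref{lem: c equals LIS} is valid since row one is a contiguous block of columns and something lands in column $M$ while nothing lands beyond it. The only cosmetic point: the worry you flag about ``points lying on $L_j$'' versus ``corner points'' is a non-issue here, because the paper explicitly defines $L_j$ via the set of maximal survivors $\bigl(i^{(j)}_k,w(i^{(j)}_k)\bigr)$, which is exactly the set you characterize.
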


For each shadow line $L_j$, it also consists $\ell_{j}-1$ ``turning points'', which are points $(x,y)$ of $L_j$ such that $(x-1,y), (x,y-1)\notin L_j$, i.e.,
\[
\left(i^{(j)}_2,w(i^{(j)}_1)\right), \left(i^{(j)}_3,w(i^{(j)}_2)\right), \dots, \left(i^{(j)}_{\ell_j},w(i^{(j)}_{\ell_j-1})\right).
\]
In total, there are $n-|\Rowone{w}|$ turning points for each $w\in S_n$.
There is a classical result connecting
these turning points to the Schensted insertion. 

\begin{ourthm}[\cite{viennot,knuth}]\label{thm: shadow turn}
Let a shadow line $L_j$ of a permutation $w$ consists of points
\[
\left(i^{(j)}_1,w(i^{(j)}_1)\right),\dots \left(i^{(j)}_{\ell_j},w(i^{(j)}_{\ell_j})\right)
\]
for some $\ell_j \geqslant 1$ and $i^{(j)}_1>i^{(j)}_2>\dots>i^{(j)}_{\ell_j}$. 
Then during Schensted insertion on $w$, when we insert $w(i^{(j)}_{k+1})$,
it bumps $w(i^{(j)}_{k})$ from the first row.
\end{ourthm}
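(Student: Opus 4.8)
The plan is to prove this by induction on $k$ along a single shadow line $L_j$, simultaneously for all shadow lines, tracking exactly what sits in the first row of the growing tableau at each stage of the decreasing Schensted insertion of $w(1), w(2), \dots$ wait — recall our convention inserts $w(n), w(n-1), \dots, w(1)$, so the insertion order runs through the source indices from large to small. Since $i^{(j)}_1 > i^{(j)}_2 > \dots > i^{(j)}_{\ell_j}$, the points of $L_j$ are encountered in the order $i^{(j)}_1$ first, then $i^{(j)}_2$, and so on, which is the order in which the claimed bumps should occur. First I would set up the bookkeeping: at the moment just before $w(r)$ is inserted, the first row of the partial tableau consists of $w(r)$-independent data determined by $w(r+1), \dots, w(n)$, and by Lemma~\ref{lem: c equals LIS} together with the structure of Schensted insertion, the first row entry in column $c$ at that moment is the largest value $v$ among $\{w(r+1),\dots,w(n)\}$ with $\textup{LIS}^w(v)$-type count equal to $c$ when restricted to the suffix. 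The cleaner way to phrase the invariant: an entry $y$ is in the first row right before inserting $w(r)$ if and only if $y = w(r')$ for some $r' > r$ and there is no $r'' $ with $r < r'' < r'$ and $w(r'') < w(r')$ — equivalently, $y$ is a "left-to-right suffix-minimum from the right" — which is exactly the condition that the point $(r', w(r'))$ currently lies on the lower-left boundary of the shadow of the already-processed points, i.e. is currently exposed.

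The key steps, in order, are: (1) Prove the invariant that the first row of the tableau just before inserting $w(r)$ consists precisely of the $w(r')$ with $r' > r$ such that $(r', w(r'))$ is not in the shadow of any $(r'', w(r''))$ with $r'' > r$ — this is a direct induction on the insertion process using the bumping rule, and it is essentially the content relating Schensted's first row to "staircase" subsequences. (2) Unwind the definition of the shadow line $L_j$: the point $(i^{(j)}_{k+1}, w(i^{(j)}_{k+1}))$ becomes exposed only after the earlier lines $L_1, \dots, L_{j-1}$ and the earlier points $i^{(j)}_1, \dots, i^{(j)}_k$ of $L_j$ itself have been removed; in tableau terms, at the moment just before $w(i^{(j)}_{k+1})$ is inserted, the first-row entry that $w(i^{(j)}_{k+1})$ will bump is the largest first-row entry smaller than $w(i^{(j)}_{k+1})$. (3) Identify that entry as $w(i^{(j)}_k)$: by construction of $L_j$, $w(i^{(j)}_1) > w(i^{(j)}_2) > \dots$ is impossible — rather the $y$-coordinates along $L_j$ go the other way — so one checks that among the currently-exposed points, $w(i^{(j)}_k)$ is the one immediately "below" $w(i^{(j)}_{k+1})$ in value, using that all points strictly between them in the relevant region have already been consumed by earlier shadow lines. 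I would lean on Theorem~\ref{thm: shadow turn}'s companion (the statement just before it, Viennot's theorem that $s = |\Rowone{w}|$) and on Lemma~\ref{lem: c equals LIS} to pin down columns, but the heart is step (1).

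The main obstacle I expect is step (2)–(3): cleanly arguing that, at the precise insertion time of $w(i^{(j)}_{k+1})$, the entry it bumps is $w(i^{(j)}_k)$ and not some entry belonging to a different shadow line or a later point of $L_j$. This requires knowing that the "competitor" entries have either not yet been inserted or have already been bumped out of the first row, and that fact is exactly what the recursive peeling definition of shadow lines encodes — but translating the geometric peeling into the temporal order of the insertion algorithm is the delicate part. A safe route is to prove a slightly stronger statement by induction on the number of shadow lines already peeled: namely that after deleting the points of $L_1, \dots, L_{j-1}$, the decreasing Schensted insertion of the remaining subword has first row recording exactly the exposed points of that subword, and the bumps within $L_j$ are governed by step (1) applied to this subword; Viennot's theorem guarantees the first rows are compatible across this peeling. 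Once that stronger inductive statement is in place, the theorem follows by reading off which entry is bumped at each turning point. I do not anticipate needing any calculation beyond careful case analysis of the bumping rule, so the write-up should be short modulo setting up the invariant precisely.
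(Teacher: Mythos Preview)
The paper does not supply a proof of this theorem: it is stated with the citation \cite{viennot,knuth} as a classical fact and then immediately used, so there is no ``paper's own proof'' to compare your proposal against. Your sketch is broadly in line with the standard argument (track which points are currently exposed to the southeast light and identify them with the first row of the partial tableau), though a couple of details in your write-up are garbled---for instance, your invariant in step~(1) has the inequality on $w(r'')$ pointing the wrong way for the paper's decreasing convention, and your aside about the $y$-coordinates along $L_j$ is confusingly phrased. If you want to include a self-contained proof, tighten the invariant to: \emph{$w(r')$ lies in row one just before inserting $w(r)$ iff there is no $r''$ with $r < r'' < r'$ and $w(r'') > w(r')$}, which is exactly the southeast-exposure condition; the bump identification then follows directly from the recursive peeling definition of the shadow lines.
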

 
Combining Proposition~\ref{prop: rsk and dark clouds} and Theorem~\ref{thm: shadow turn}, we have the following.

\begin{cor}
Each of the turning points in the shadow diagram of $w$ contains a dark cloud in $\snow(w)$. Any dark cloud in $\snow(w)$ is also a turning point in the shadow diagram of $w$.
\end{cor}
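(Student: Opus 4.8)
The corollary asserts a bijection between the dark clouds of $\snow(w)$ (we abbreviate $\snow(RD(w))$) and the turning points of the shadow diagram of $w$, and moreover that a dark cloud and its corresponding turning point occupy the \emph{same} cell $(r,c)$ in the plane. The strategy is to route both objects through the Schensted insertion and compare. On one side, Proposition~\ref{prop: rsk and dark clouds} says that a dark cloud sits at $(r,c)$ exactly when the insertion of $w(r)$ bumps the entry $c$ from the first row — equivalently, $c = w(r'')$ for the (unique) $r'' > r$ such that $w(r)$ bumps $w(r'')$ out of row one. On the other side, Theorem~\ref{thm: shadow turn} says that the turning points of the shadow diagram are exactly the points $\bigl(i^{(j)}_{k+1}, w(i^{(j)}_{k})\bigr)$, and that for such a point the insertion of $w(i^{(j)}_{k+1})$ bumps $w(i^{(j)}_{k})$ from the first row.

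\textbf{Step 1: From a dark cloud to a turning point.} Suppose $(r,c) \in \dark(\snow(w))$. By Proposition~\ref{prop: rsk and dark clouds}(2), $c \neq w(r)$ is impossible here is not the issue; rather, $w(r)$ bumps $c$ from the first row, so $c = w(r'')$ where $r''$ is the index with $w(r'') = c$. Then $(r,c) = (r'', \text{?})$ — more precisely we have the pair of indices $r > $ nothing; what we record is: the insertion of $w(r)$ bumps $w(r'')$ where $r > r''$? No: since $(r,c) = (r, w(r')) \in RD(w)$ forces $r < r'$ and $w(r) > w(r')$, so with $r' = $ the row index of $c$ we get $r < r'$ and $c = w(r')$, and $w(r)$ bumps $w(r')$. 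Now I invoke Theorem~\ref{thm: shadow turn}: the fact that $w(r)$ bumps $w(r')$ from row one means $r, r'$ are consecutive indices $i^{(j)}_{k}, i^{(j)}_{k+1}$ (in decreasing order, so $r = i^{(j)}_{k+1}$, $r' = i^{(j)}_{k}$) along some shadow line $L_j$, and hence $\bigl(i^{(j)}_{k+1}, w(i^{(j)}_k)\bigr) = (r, w(r')) = (r,c)$ is a turning point. This requires the (standard, citable from \cite{viennot,knuth}) fact that consecutive points on a shadow line are exactly the consecutive bumping pairs in row one — which is the content of Theorem~\ref{thm: shadow turn} read in reverse.

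\textbf{Step 2: From a turning point to a dark cloud.} Conversely, let $(x,y)$ be a turning point, so $(x,y) = \bigl(i^{(j)}_{k+1}, w(i^{(j)}_k)\bigr)$ for some $j,k$. By Theorem~\ref{thm: shadow turn}, inserting $w(i^{(j)}_{k+1})$ bumps $w(i^{(j)}_k)$ from row one. Set $r = i^{(j)}_{k+1}$; then $w(r)$ bumps $y = w(i^{(j)}_k)$ from row one, and since $i^{(j)}_{k+1} < i^{(j)}_k$ (indices decreasing) together with $w(r) > w(i^{(j)}_k)$ (it bumped it), the cell $(r, y)$ lies in $RD(w)$. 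Now Proposition~\ref{prop: rsk and dark clouds}(2) applies directly: because $w(r)$ bumps $y$ in row one, $(r,y)$ is a dark cloud in $\snow(w)$. So $(x,y) = (r,y) \in \dark(\snow(w))$.

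\textbf{Step 3: Conclude.} Steps 1 and 2 exhibit the identity map on cells of $\mathbb{Z}_{>0}\times\mathbb{Z}_{>0}$ as a bijection between $\dark(\snow(w))$ and the turning-point set. Count check (optional sanity, not needed for the proof): Proposition~\ref{prop: rsk and dark clouds} gives exactly one dark cloud per row $r$ whose $w(r)$ bumps, i.e. $n - |\Rowone{w}|$ of them, matching the stated count $n - |\Rowone{w}|$ of turning points. \textbf{Main obstacle.} The only delicate point is making the correspondence in Step 1 precise: Theorem~\ref{thm: shadow turn} is phrased as ``consecutive points on a shadow line give a bump,'' and I need the converse, that \emph{every} row-one bump arises this way. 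This is classical (each element inserted into row one either appends or bumps, and the bumping chains in row one are precisely traced by the shadow lines — see \cite{viennot,knuth}), but the writeup should state it cleanly, perhaps as: ``$w(a)$ bumps $w(b)$ from the first row during Schensted insertion if and only if $b$ and $a$ are consecutive (in decreasing order) along a common shadow line of $w$.'' Everything else is bookkeeping with the inequalities $r < r'$, $w(r) > w(r')$ that certify membership in $RD(w)$.
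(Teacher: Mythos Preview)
Your approach is correct and matches the paper's: the corollary is stated as an immediate consequence of combining Proposition~\ref{prop: rsk and dark clouds} and Theorem~\ref{thm: shadow turn}, with no further argument given.

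One remark on execution. You flag as the ``main obstacle'' that Step~1 needs the converse of Theorem~\ref{thm: shadow turn} (every row-one bump arises from consecutive points on a shadow line). This converse is indeed true---it follows because the shadow lines partition $\{(r,w(r)):r\in[n]\}$, so every $r$ equals some $i^{(j)}_k$ and Theorem~\ref{thm: shadow turn} then accounts for every bump---but you can avoid invoking it altogether. Your Step~2 already shows that the set of turning points is contained in $\dark(\snow(w))$, and your ``optional sanity check'' in Step~3 shows both sets have cardinality $n-|\Rowone{w}|$; these two facts together give equality. So the counting argument is not optional but rather the cleanest way to close the proof, and this is in fact the route hinted at in the paper (a commented-out proof sketch there reads ``the result follows from\ldots the fact that they are equinumerous''). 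With that adjustment your Step~1 becomes unnecessary and the obstacle disappears.
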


\begin{exa}
Consider $w = 3721564\in S_7$. We present its Rothe diagram, its shadow diagram, 
and the snow diagram of $RD(w)$. 
From Example~\ref{E: Schensted},
the Schensted insertion on $w$ yields a tableau
whose row 1 has three cells. 
Correspondingly, there are three shadow lines.
The turning points of the shadow lines are $(3,1), (1,2), (6,4), (2,6)$, which are positions for dark clouds in $\snow(RD(w))$.

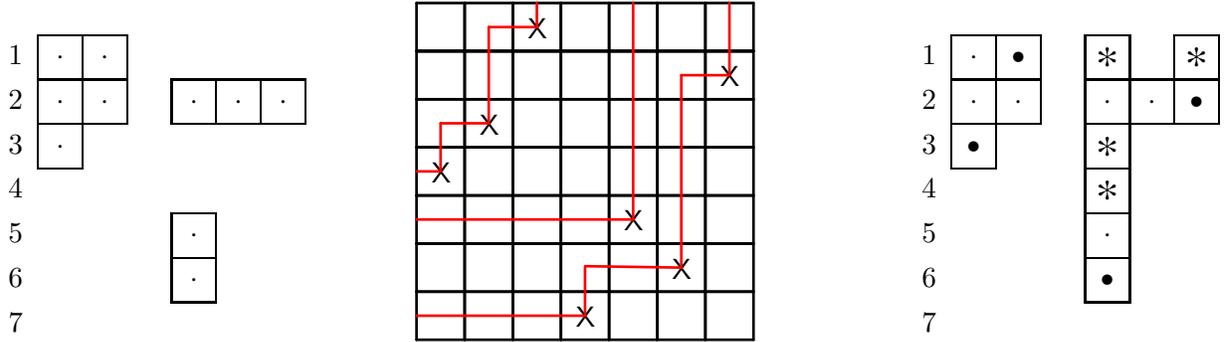
\begin{figure}[ht]
\centering
\begin{minipage}{0.26\textwidth}
\[
\begin{ytableau}
\none[1] & \cdot & \cdot \cr
\none[2] & \cdot & \cdot & \none & \cdot & \cdot & \cdot \cr
\none[3] & \cdot \cr
\none[4] \cr
\none[5] & \none & \none & \none & \cdot \cr
\none[6] & \none & \none & \none & \cdot \cr
\none[7]
\end{ytableau}
\]
\end{minipage}
\begin{minipage}{0.46\textwidth}
\begin{tikzpicture}[line cap=round,line join=round,>=triangle 45,x=0.64cm,y=0.64cm]
\clip(-2,-2) rectangle (13.5,8.5);
\draw [line width=1pt,color=black] (0,0)-- (7,0);
\draw [line width=1pt,color=black] (7,0)-- (7,7);
\draw [line width=1pt,color=black] (7,7)-- (0,7);
\draw [line width=1pt,color=black] (0,7)-- (0,0);
\draw [line width=1pt,color=black] (0,6)-- (7,6);
\draw [line width=1pt,color=black] (0,6)-- (7,6);
\draw [line width=1pt,color=black] (0,5)-- (7,5);
\draw [line width=1pt,color=black] (0,4)-- (7,4);
\draw [line width=1pt,color=black] (0,3)-- (7,3);
\draw [line width=1pt,color=black] (0,2)-- (7,2);
\draw [line width=1pt,color=black] (0,1)-- (7,1);
\draw [line width=1pt,color=black] (1,0)-- (1,7);
\draw [line width=1pt,color=black] (2,7)-- (2,0);
\draw [line width=1pt,color=black] (3,7)-- (3,0);
\draw [line width=1pt,color=black] (4,0)-- (4,7);
\draw [line width=1pt,color=black] (5,7)-- (5,0);
\draw [line width=1pt,color=black] (6,0)-- (6,7);
\draw (2.1,6.9) node[anchor=north west] {$\X$};
\draw (6.1,5.9) node[anchor=north west] {$\X$};
\draw (1.1,4.9) node[anchor=north west] {$\X$};
\draw (0.1,3.9) node[anchor=north west] {$\X$};
\draw (4.1,2.9) node[anchor=north west] {$\X$};
\draw (5.1,1.9) node[anchor=north west] {$\X$};
\draw (3.1,0.9) node[anchor=north west] {$\X$};
\draw [line width=1pt,color=black] (0,6)-- (7,6);
\draw [line width=1pt,color=black] (0,6)-- (7,6);
\draw [line width=1pt,color=black] (0,5)-- (7,5);
\draw [line width=1pt,color=black] (0,4)-- (7,4);
\draw [line width=1pt,color=black] (0,3)-- (7,3);
\draw [line width=1pt,color=black] (0,2)-- (7,2);
\draw [line width=1pt,color=black] (0,1)-- (7,1);
\draw [line width=1pt,color=black] (1,0)-- (1,7);
\draw [line width=1pt,color=black] (2,7)-- (2,0);
\draw [line width=1pt,color=black] (3,7)-- (3,0);
\draw [line width=1pt,color=black] (4,0)-- (4,7);
\draw [line width=1pt,color=black] (5,7)-- (5,0);
\draw [line width=1pt,color=black] (6,0)-- (6,7);
\draw [line width=1pt,color=red] (0,0.5)-- (3.5,0.5);
\draw [line width=1pt,color=red] (3.5,0.5)-- (3.5,1.5);
\draw [line width=1pt,color=red] (3.5,1.53)-- (5.5,1.5);
\draw [line width=1pt,color=red] (5.5,1.5)-- (5.5,5.5);
\draw [line width=1pt,color=red] (5.5,5.5)-- (6.5,5.5);
\draw [line width=1pt,color=red] (6.5,5.5)-- (6.5,7);
\draw [line width=1pt,color=red] (0,2.5)-- (4.5,2.5);
\draw [line width=1pt,color=red] (4.5,2.5)-- (4.5,7);
\draw [line width=1pt,color=red] (0,3.5)-- (0.5,3.5);
\draw [line width=1pt,color=red] (0.5,3.5)-- (0.5,4.5);
\draw [line width=1pt,color=red] (0.5,4.5)-- (1.5,4.5);
\draw [line width=1pt,color=red] (1.5,4.5)-- (1.5,6.5);
\draw [line width=1pt,color=red] (1.5,6.5)-- (2.5,6.5);
\draw [line width=1pt,color=red] (2.5,6.5)-- (2.5,7);
\end{tikzpicture}
\end{minipage}
\begin{minipage}{0.26\textwidth}
\[
\begin{ytableau}
\none[1] & \cdot & \bullet& \none & \snowflake & \none & \snowflake \cr
\none[2] & \cdot & \cdot & \none & \cdot & \cdot & \bullet \cr
\none[3] & \bullet & \none & \none & \snowflake \cr
\none[4] & \none & \none & \none & \snowflake \cr
\none[5] & \none & \none & \none & \cdot \cr
\none[6] & \none & \none & \none & \bullet \cr
\none[7]
\end{ytableau}
\]
\end{minipage}
\caption{Left: RD($w$);\quad Middle: shadow diagram of $w$;\quad Right: $\snow(RD(w))$}
\end{figure}

\end{exa}

\begin{rem}
A geometric interpretation for the rajcode is given 
in~\cite[Section~4]{PSW} in terms of the ``blob diagrams.'' 
Specifically, the set of points in the same shadow line in the 
shadow line diagram is labeled as $B_n,B_{n-1},\dots$ from
southeast to northwest. With the labeling on the blob diagrams, we can
obtain the rajcode directly. 
That is, if $(i,w(i))\in B_k$, then $\rajcode(w)_i=k-i$.
\end{rem}

\subsection{Inverse fireworks permutations}
\label{subsec.inverse.fireworks}

Now we have seen that our snow diagrams 
are connected to the work of Pechenik, Speyer and Weigandt~\cite{PSW}.
We recall another interesting notion in their work.

\begin{defn}\cite[Definition 3.5]{PSW}
A permutation $w \in S_n$ is a 
\definition{fireworks permutation}
if its initial element in each decreasing run
is increasing. 
A permutation $w \in S_n$ is an 
\definition{inverse fireworks permutation}
if $w^{-1}$ is a fireworks permutation.
\end{defn}

Inverse fireworks permutations
are the representatives of equivalence classes,
given by permutations with the same $\rajcode$~\cite{PSW}.
The snowy weak compositions play the same role
in our study of $\topLas$.
We investigate the similarities between inverse fireworks permutations
and snowy weak compositions. 
For $w$ inverse fireworks, $RD(w)$ enjoy analogous properties as the $D(\alpha)$ of snowy $\alpha$. 
We start with the following 
observation about $RD(w)$.

\begin{lemma}
\label{L: inv fire perm diagram}
Let $w \in S_n$ be an inverse fireworks permutation.
Consider each $r \in [n]$ such that 
row $r$ of $RD(w)$ is not empty. 
The rightmost cell in row $r$ of $RD(w)$
is $(r, w(r) - 1)$.
\end{lemma}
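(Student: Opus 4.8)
The plan is to translate the claim into a statement purely about the one-line notation of $w$. By definition, row $r$ of $RD(w)$ is the set of cells $(r, w(r'))$ with $r' > r$ and $w(r') < w(r)$, and every such cell lies in a column at most $w(r)-1$. So it suffices to show that whenever row $r$ is nonempty, the cell $(r, w(r)-1)$ actually belongs to $RD(w)$, which is equivalent to $w^{-1}(w(r)-1) > r$. Note also that nonemptiness of row $r$ (there is some $r'>r$ with $w(r')<w(r)$, so $w(r)\ge 2$) guarantees $w(r)-1$ is a legitimate value and position.

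I would prove $w^{-1}(w(r)-1) > r$ by contradiction. Write $u := w^{-1}$ and $v := w(r)$, and suppose $p := u_{v-1} = w^{-1}(v-1) < r$; since $u_v = w^{-1}(v) = r$, this means $u_{v-1} < u_v$, so position $v$ is the left endpoint of a decreasing run of $u$, whose initial entry is $u_v = r$. On the other hand, nonemptiness of row $r$ gives $r' > r$ with $w(r') < v$; put $q := w(r')$, so $q \le v-1$ and $u_q = w^{-1}(w(r')) = r' > r$. Let $[s,t]$ be the decreasing run of $u$ containing $q$. Since $u$ decreases along this run and $s \le q$, we have $u_s \ge u_q > r$. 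Because $v$ begins its own run and the maximal decreasing runs of $u$ partition $[n]$ into intervals, while $s \le q \le v-1 < v$, the interval $[s,t]$ must lie entirely to the left of $v$; hence $[s,t]$ precedes the run beginning at $v$ in left-to-right order. The fireworks property of $u = w^{-1}$ then forces $u_s < u_v = r$, contradicting $u_s > r$.

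Once $w^{-1}(w(r)-1) > r$ is established, $(r,\, w^{-1}(w(r)-1))$ is an inversion of $w$, so $(r, w(r)-1) \in RD(w)$, and combined with the first paragraph it is the rightmost cell of row $r$.

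I expect the one genuinely delicate point to be the assertion that the decreasing run of $u$ through position $q$ lies strictly to the left of the run beginning at $v$, rather than overlapping it: this is exactly where one uses that $v$ is itself the left endpoint of a run, together with $q < v$ and the fact that runs are disjoint intervals. Everything else is routine bookkeeping, translating between $w$ and $w^{-1}$ and between inversions of $w$ and columns of the Rothe diagram.
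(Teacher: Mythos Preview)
Your proof is correct and follows essentially the same approach as the paper's: both reduce the claim to showing that $w^{-1}(c-1) > w^{-1}(c)$ where $c = w(r)$, and both use the fireworks property of $w^{-1}$ together with the existence of some $i < c$ with $w^{-1}(i) > w^{-1}(c)$ (coming from nonemptiness of row $r$) to conclude that $c$ cannot begin a decreasing run of $w^{-1}$. The paper states this last implication in one sentence, while you unpack it as an explicit contradiction argument involving the run $[s,t]$ through $q$; the content is the same.
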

\begin{proof}
Recall that $(r, w(r')) \in RD(w)$ 
if and only if $(r, r') \in \Inv(w)$
if and only if $(w(r'), w(r)) 
\in \Inv(w^{-1})$.
Let $c = w(r)$.
Clearly, cells in row $r$
of $RD(w)$ are within the first $c - 1$
columns.
It remains to check $(r, c-1) \in RD(w)$,
which is equivalent to $(c -1, c) \in \Inv(w^{-1})$.

Since row $r$ of $RD(w)$ is nonempty, 
it must contain a cell $(r,i)$ such that $(i,c)\in \Inv(w^{-1})$ 
for some $i \in [c-1]$. Since $w^{-1}(i)>w^{-1}(c)$ and $w^{-1}$ is 
fireworks, $w^{-1}(c)$ can not be the initial element in its 
decreasing run. Therefore $w^{-1}(c-1) > w^{-1}(c)$ and 
we have $(c-1, c) \in \Inv(w^{-1})$.
\end{proof}

We can characterize the inverse fireworks permutations
using Rothe diagrams or the snow diagram of the permutation. This 
is similar to Remark~\ref{R: snowy},
where we describe snowy weak compositions using key diagrams and 
dark clouds.

\begin{prop}
Take $w \in S_n$.
The following are equivalent:
\begin{enumerate}
\item $w$ is an inverse fireworks permutation.
\item In $RD(w)$, the rightmost cells in each row 
are in different columns. 
\item In $\snow(RD(w))$, 
the rightmost cell in each row is a dark cloud. 
\end{enumerate}
\end{prop}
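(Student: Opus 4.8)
The plan is to prove the three equivalences by showing (1) $\Rightarrow$ (2) $\Rightarrow$ (3) $\Rightarrow$ (1), mirroring the structure of Remark~\ref{R: snowy} for snowy weak compositions. The implication (1) $\Rightarrow$ (2) is essentially immediate from Lemma~\ref{L: inv fire perm diagram}: if $w$ is inverse fireworks, then every nonempty row $r$ of $RD(w)$ has its rightmost cell at column $w(r)-1$, and since $w$ is a bijection the values $w(r)$ are distinct, hence the columns $w(r)-1$ are distinct. (One should note that the empty rows contribute no rightmost cell, so there is nothing to check there.)

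For (2) $\Rightarrow$ (3), I would argue that the condition ``rightmost cells of $RD(w)$ lie in distinct columns'' forces, via the bottom-to-top construction of $\snow(RD(w))$, that the rightmost cell of each nonempty row becomes a dark cloud. The key observation is the second bullet of Remark~\ref{R: Basic of dark}: a cell $(r,c)\in D$ is a dark cloud whenever there is no dark cloud strictly below it in column $c$ and no dark cloud strictly to its right in row $r$. Since the rightmost cell of row $r$ is by definition the rightmost cell, the ``no dark cloud to the right in row $r$'' condition is automatic; so I must show there is no dark cloud below it in its own column. Here I would use distinctness: a dark cloud lower down in column $c$ would have to be the rightmost cell of some lower row $r'$ (dark clouds in $RD(w)$ are rightmost cells of their rows by an easy induction, or by the first bullet of Remark~\ref{R: Basic of dark} combined with the structure of Rothe diagrams), contradicting that all rightmost cells sit in distinct columns. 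For empty rows of $RD(w)$, I need to check the ``rightmost cell is a dark cloud'' statement is vacuous or handled — likely the intended reading is ``each nonempty row,'' matching Lemma~\ref{L: inv fire perm diagram} and Remark~\ref{R: snowy}.

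For (3) $\Rightarrow$ (1), I would prove the contrapositive: if $w$ is not inverse fireworks, then $\snow(RD(w))$ has a row whose rightmost cell is not a dark cloud. Using Proposition~\ref{prop: rsk and dark clouds}, the dark cloud in row $r$ of $\snow(RD(w))$ is governed by the Schensted insertion of $w(r)$: it exists (at column $c$) iff $w(r)$ bumps some entry $c$ from row one. When $w$ fails to be inverse fireworks, $w^{-1}$ has a decreasing run whose initial element is not minimal among run-initials occurring later; translating through Lemma~\ref{L: inv fire perm diagram}'s proof, this produces a row $r$ whose rightmost cell of $RD(w)$ is at a column $c < w(r)-1$, i.e.\ the value $w(r)-1$ does not index a cell. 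Then the dark cloud produced in row $r$ (if any) lands to the left of the rightmost cell, or a dark cloud appears below in that column — either way the rightmost cell of row $r$ is not a dark cloud.

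The main obstacle I anticipate is the bookkeeping in (3) $\Rightarrow$ (1): carefully extracting, from the failure of the inverse-fireworks condition, a specific row $r$ and column configuration that is incompatible with ``rightmost cell is a dark cloud,'' and making the interplay between the combinatorics of $w^{-1}$'s decreasing runs, the Rothe diagram, and the Schensted-insertion description of dark clouds precise. An alternative that may be cleaner is to route everything through $\dark(RD(w))$: show directly that $w$ is inverse fireworks iff the dark clouds of $\snow(RD(w))$ are exactly $\{(r, w(r)-1) : \text{row } r \text{ of } RD(w) \text{ nonempty}\}$, using Proposition~\ref{prop: rsk and dark clouds} and Lemma~\ref{lem: c equals LIS} to pin down when $w(r)$ bumps exactly the cell at column $w(r)-1$; then (2) and (3) both become restatements of this, and the equivalence with (1) follows from Lemma~\ref{L: inv fire perm diagram} and a converse argument.
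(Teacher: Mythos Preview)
Your decomposition and the first two implications are fine and match the paper: the paper also derives $(1)\Rightarrow(2)$ immediately from Lemma~\ref{L: inv fire perm diagram}, and it dispatches $(2)\Leftrightarrow(3)$ in one line (your inductive justification of $(2)\Rightarrow(3)$ is the right one; $(3)\Rightarrow(2)$ is immediate from $\dark(RD(w))\in\Rook_+$).

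The gap is in your $(3)\Rightarrow(1)$ sketch. From the failure of inverse fireworks you correctly extract a nonempty row $r$ whose rightmost cell sits in some column $i<w(r)-1$. But this fact alone does \emph{not} prevent $(r,i)$ from being a dark cloud: nothing you have written forces a dark cloud to appear below $(r,i)$, and the Schensted description in Proposition~\ref{prop: rsk and dark clouds} tells you only which column is bumped, not that the bumped column differs from $i$. Your sentence ``the dark cloud produced in row $r$ lands to the left of the rightmost cell, or a dark cloud appears below in that column'' is restating the desired conclusion, not proving it.

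The paper closes the loop without Schensted by showing $\neg(1)\Rightarrow\neg(2)$ directly. With the same setup (let $c=w(r)$, $c'=w(r')$ for the offending initial elements $r<r'$ of consecutive decreasing runs of $w^{-1}$), one checks $(r,c')\in RD(w)$ so row $r$ is nonempty with rightmost cell in some column $i$ with $c'\le i<c-1$. The key extra step you are missing: since $i,i+1$ lie in the decreasing run of $w^{-1}$ starting at $c'$, one has $(i,i+1)\in\Inv(w^{-1})$, and hence row $w^{-1}(i+1)$ of $RD(w)$ also has its rightmost cell in column $i$. Two rows share a rightmost column, contradicting $(2)$ (and hence $(3)$, since at most one of those two cells in column $i$ can be a dark cloud). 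This is both shorter and avoids the RSK machinery; your alternative route through Proposition~\ref{prop: rsk and dark clouds} and Lemma~\ref{lem: c equals LIS} would still need this second row, so it does not buy you anything here.
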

\begin{proof}
The last two statements are clearly equivalent. 
Now we establish the equivalence of 
the first two statements.

Assume $w$ is inverse fireworks.
Take $r, r' \in [n]$ with $r \neq r'$
such that row $r$ and row $r'$ of $RD(w)$
are not empty.
By Lemma~\ref{L: inv fire perm diagram},
the rightmost cell in row $r$ 
(resp. $r'$)
is at $(r, w(r) - 1)$ 
(resp. $(r', w(r') - 1)$).
Clearly, $w(r) - 1 \neq w(r') - 1$,
so we have our second statement. 

Now we assume $w$ is not inverse fireworks.
We can find a number $r$ in $w^{-1}$
such that $r$ is the initial element in
its decreasing run, 
but $r$ is less than $r'$, 
the initial element of the 
previous decreasing run. 
Let $c' = w(r')$ and $c = w(r)$.
Since $(c', c) \in \Inv(w^{-1})$, 
$(r, c') \in RD(w)$.
Thus, row $r$ of $RD(w)$ is not empty.
Let $(r, i)$ be the rightmost cell
in row $r$.
In other words, $i$ is the largest
such that $(i, c) \in \Inv(w^{-1})$.
We have $c' \leqslant i < c - 1$.
Consider the decreasing run before $w^{-1}(c)$:
$w^{-1}(c') > w^{-1}(c' + 1) > \cdots > w^{-1}(c - 1)$.
We see $(i, i+1)$ is also in $\Inv(w^{-1})$.
In row $w^{-1}(i+1)$, 
the cell $(w^{-1}(i+1), i)$ is the 
rightmost cell of its row.
Thus, the second statement does not hold,
and the proof is finished. 
\end{proof}

With the above proposition, we can compute $\rajcode(w)$ easily
if $w$ is inverse fireworks.
The following rule is similar to Lemma~\ref{L: dark, rajcode and raj of snowy weak composition}(\ref{snowy.weak.2}).

\begin{prop}
Assume $w \in S_n$ is inverse fireworks.
For each $r \in [n]$,
\begin{align*}
\rajcode(w)_r = |\{r' > r: &(r,r') \in \Inv(w) \textrm{ or }\\
&w(r') > w(r)\textrm{ and }(r', r'') \in \Inv(w) \textrm{ for some } r''\}|.
\end{align*}
\end{prop}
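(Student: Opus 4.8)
The plan is to read $\rajcode(w)_r$ directly off the snow diagram of $RD(w)$. By Theorem~\ref{T: equivalence of rajcode on perm}, $\rajcode(w)_r = \rajcode(RD(w))_r$ is the number of cells in row $r$ of $\snow(RD(w))$, and these cells come in two kinds: the non-snowflake cells, which are exactly the cells of $RD(w)$ in row $r$, and the snowflakes. The first kind is counted by $\invcode(w)_r = |\{r' > r : (r,r') \in \Inv(w)\}|$, which is precisely the first alternative in the claimed index set. Since the two alternatives there are mutually exclusive (one forces $w(r') < w(r)$, the other $w(r') > w(r)$), the task reduces to identifying the number of snowflakes in row $r$ with the number of $r' > r$ satisfying $w(r') > w(r)$ and having row $r'$ of $RD(w)$ nonempty, i.e.\ $(r', r'') \in \Inv(w)$ for some $r''$.

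Next I would describe these snowflakes. From the snow diagram construction and the fact that $\dark(RD(w)) \in \Rook_+$, the snowflakes in row $r$ are in bijection with the columns $c$ carrying a dark cloud at some $(r', c)$ with $r' > r$ and $(r, c) \notin RD(w)$. Now the hypothesis enters: by the characterization of inverse fireworks permutations proved just above, together with Lemma~\ref{L: inv fire perm diagram}, the dark clouds of $\snow(RD(w))$ are precisely the cells $(r', w(r') - 1)$ as $r'$ ranges over the rows of $RD(w)$ that are nonempty. Hence the snowflakes in row $r$ correspond exactly to those $r' > r$ with row $r'$ of $RD(w)$ nonempty and $(r, w(r') - 1) \notin RD(w)$.

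The last step is the equivalence: for $r'$ as above, $(r, w(r')-1) \notin RD(w)$ holds if and only if $w(r') > w(r)$. If $w(r') > w(r)$, then $w(r')-1 \geq w(r)$, whereas the cells of $RD(w)$ in row $r$ occupy columns strictly less than $w(r)$, so $(r, w(r')-1) \notin RD(w)$. If instead $w(r') < w(r)$, I would argue exactly as in the proof of Lemma~\ref{L: inv fire perm diagram}: nonemptiness of row $r'$ together with $w^{-1}$ being fireworks forces $w^{-1}(w(r'))$ not to begin its decreasing run, so $w^{-1}(w(r')-1) > w^{-1}(w(r')) = r' > r$; since also $w(r) > w(r')-1$, this places $(r, w(r')-1)$ in $RD(w)$. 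Adding the count $\invcode(w)_r$ of non-snowflake cells to the count of snowflakes then gives the stated formula. I expect this last equivalence to be the only genuine obstacle — the rest is bookkeeping — and it is exactly the place where the inverse fireworks property is indispensable: it pins each dark cloud one column to the left of the corresponding $w(r')$, and it controls the position of $w^{-1}(w(r')-1)$.
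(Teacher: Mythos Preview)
Your proposal is correct and follows essentially the same approach as the paper: both split the cells of row $r$ in $\snow(RD(w))$ into non-snowflakes (counted by $\invcode(w)_r$) and snowflakes, then use Lemma~\ref{L: inv fire perm diagram} to locate the dark clouds at $(r',w(r')-1)$. The only cosmetic difference is in the last step: the paper first rephrases ``$(r,c)\notin RD(w)$ with a dark cloud below'' as ``$c$ lies to the right of column $w(r)$'' and then reads off $w(r')>w(r)$, whereas you verify the equivalence $(r,w(r')-1)\notin RD(w)\iff w(r')>w(r)$ directly, re-invoking the fireworks argument for the backward direction. Your version is slightly more explicit at that point, but the two arguments are the same in substance.
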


\begin{proof}
First, we know $\rajcode(w)_r = \rajcode(RD(w))_r$
is the number of cells in the $r$\textsuperscript{th}  row of $\snow(RD(w))$.
The number of non-snowflake cells
on this row is given by $|\{r': (r, r') \in \Inv(w)\}|$.

Now we count the number of snowflakes 
in row $r$ of $\snow(RD(w))$.
It is the number of $r' > r$
such that row $r'$ of $\snow(RD(w))$ has a dark cloud
on the right of the column $w(r)$.
By Lemma~\ref{L: inv fire perm diagram}, 
row $r'$ has a dark cloud 
at column $w(r') - 1$ if $RD(w)$ is nonempty in row $r'$.
Thus, the number of snowflakes 
in row $r$ of $\snow(RD(w))$ is the number of $r' > r$
such that $w(r') > w(r)$ and $(r', r'') \in \Inv(w)$
for some $r''$.
\end{proof}

\section{Vector space spanned by \texorpdfstring{$\topGro_w$}{TEXT}}
\label{S: Space}

We now study the vector spaces
$\widehat{V}_n := \Q\textrm{-span}\{\topGro_w: w \in S_n\}$
and $\widehat{V} := \Q\textrm{-span}\{\topGro_w: w \in S_+\}$.
By Theorem~\ref{thm:psw},
they have bases
$$\{\topGro_w: w \in S_n 
\textrm{ is inverse fireworks}\}  
\quad\textrm{ and }\quad 
\{\topGro_w: w \in S_+ \textrm{ is inverse fireworks}\}
$$
respectively. 
By~\cite{Cla}, 
the number of inverse fireworks permutations in $S_n$
is $B_n$, the $n$\textsuperscript{th} Bell number.
Thus, $\widehat{V}_n$ has dimension $B_n$.

We introduce another basis of $\widehat{V}_n$ and $\widehat{V}$ 
consisting of $\topLas_\alpha$, the top-degree components of Lascoux polynomials.
One application of 
the top Lascoux basis 
is to compute the Hilbert series of $\widehat{V}_n$
and $\widehat{V}$.
For a vector space $V \subseteq \Q[x_1, x_2, \cdots]$,
the \definition{Hilbert series} of $V$ is 
$$\Hilb(V;q) := \sum_{d \geq 0}\, m_d q^d,$$
where $m_d$ is the number of polynomials 
with degree $d$ in a homogeneous 
basis of $V$.

In Subsection~\ref{SS: Bell},
we recall the definition of $B_n$ and its $q$-analogue $B_n(q)$. 
In Subsection~\ref{subsec.hilbert.vn}, we compute $\Hilb(\widehat{V}_n;q)$ 
using $B_n(q)$ and rook-theoretic results. 
In Subsection~\ref{subsec.v.hat},
we compute $\Hilb(\widehat{V};q)$.

\subsection{Stirling numbers, Bell numbers
and their \texorpdfstring{$q$}{TEXT}-analogues}
\label{SS: Bell}

Let $n, k$ be non-negative integers
throughout this subsection. 
Let $S_{n, k}$ be the \definition{Stirling number
of the second kind},
defined by the recurrence relation
$$
S_{n+1,k} = S_{n, k - 1} + k S_{n, k},
$$
together with $S_{0, 0} = 1$
and $S_{0,k} = 0$ if $k > 0$.
Let $B_n:= \sum_{j = 0}^n S_{n,j}$
be the Bell number
which satisfies the following recurrence relation
$$
B_{n + 1} = \sum_{j = 0}^n {\binom{n}{j}} B_j.
$$
 
Let $\Rook_n$ be the set 
of non-attacking rook diagrams contained 
in $\Stair_n$.
It is an exercise to show
$B_n = |\Rook_n|$.
In~\cite{BCHR},
Butler, Can, Haglund, and Remmel
built an explicit bijection between
$\Rook_n$ and set partitions of $[n]$ .

Now consider the polynomial ring $\Q[q]$.
Define $[n]_q := 1 + q + \cdots + q^{n-1}$.
Define a \definition{$q$-analogue of $S_{n,k}$} recursively by:
$$
S_{n+1, k}(q) = q^{k-1} S_{n, k-1}(q) + [k]_q S_{n, k}(q),
$$
with base cases $S_{0, k}(q) = S_{0,k}$.
Similarly, define a \definition{$q$-analogue of $B_n$} by $B_n(q) := \sum_{j = 0}^n S_{n,j}(q)$.
The coefficients in $B_{n}(q)$
are given in OEIS A126347.
By~\cite{Wa},
$B_n(q)$ satisfies the recurrence relation
$$
B_{n+1}(q) = \sum_{j = 0}^n q^j {\binom{n}{j}} B_j(q).
$$

Milne~\cite{Mil} first gave a combinatorial
model for $S_{n,k}(q)$ using set partitions. 
We use the combinatorial model
developed by Garsia and Remmel~\cite{GR}.
They defined a statistic on $\Rook_n$
called ``inversion''.
We rename it as $\GR_n$ to distinguish it 
from the inversion on permutations. 
\begin{defn}[\cite{GR}]
Assume $R \in \Rook_n$.
For each $(r,c) \in R$,
mark all cells $(r', c)$ with $r' \in [r]$ in $\Stair_n$.
Also, mark all cells $(r, c')$ with $c' \in [c]$ in $\Stair_n$.
The number $\GR_n(R)$ counts cells in $\Stair_n$
that are not marked. 
\end{defn}

Garsia and Remmel prove that
\begin{equation}
\label{EQ: GR Stirling}
S_{n,k}(q) = \sum_{\substack{D \in \Rook_{n}\\ |D| = n-k}} q^{\GR_n(D)},
\end{equation}
which implies
\begin{equation}
\label{EQ: GR Bell}
B_n(q) = \sum_{D \in \Rook_n} q^{\GR_n(D)}.
\end{equation}

From this formula, $B_n(q)$ has degree ${\binom{n}{2}}$
since $\GR_n(\emptyset) = {\binom{n}{2}}$.

\subsection{Computing \texorpdfstring{$\Hilb(\widehat{V}_n; q)$}{TEXT}}
\label{subsec.hilbert.vn}

Define 
$$C_n := \{ \alpha \in C_+: \supp(\alpha) \subseteq [n - 1], \alpha_i \leq n - i \textrm{ for all $i \in [n - 1]$}\}.$$

Then we can refine 
Theorem~\ref{T: Gro to Las}.
\begin{cor}
\label{C: Gro to Las for n}
For $w \in S_n$, 
$\fGb_w$ expands positively into $\{\fLb_\alpha: \alpha \in C_n\}$.
\end{cor}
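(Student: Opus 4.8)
The plan is to leverage Theorem~\ref{T: Gro to Las}, which already gives the positive expansion $\fGb_w = \sum_\alpha g_\alpha \fLb_\alpha$ with $g_\alpha \in \mathbb{Z}_{\geq 0}[\beta]$, and to show that only weak compositions $\alpha \in C_n$ can appear with nonzero coefficient. I would control the indexing set by examining which monomials can occur in $\fGb_w$ for $w \in S_n$. First I would recall that every $K$-Kohnert diagram of $\alpha$ (equivalently, every monomial $x^\gamma \beta^d$ in $\fLb_\alpha$) has $x^\gamma$ dividing $x^{\wt(\overline{D(\alpha)})}$, so the support of every monomial in $\fLb_\alpha$ lies in the columns occupied by $D(\alpha)$; in particular the leading monomial of $\fLb_\alpha$ is $x^\alpha$ (fact (2) from the introduction, due to~\cite{LS2}). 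This means that in the expansion of $\fGb_w$, the leading monomials $x^\alpha$ of the $\fLb_\alpha$ appearing are themselves monomials of $\fGb_w$.

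Next I would pin down the monomial support of $\fGb_w$ for $w \in S_n$. The key structural fact is that $\fGb_w$, and indeed all Grothendieck polynomials of permutations in $S_n$, lie in the subring $\mathbb{Z}[\beta][x_1,\dots,x_{n-1}]$ (no variable $x_i$ with $i \geq n$ appears, since the base case is $x_1^{n-1}\cdots x_{n-1}$ and the operators $\partial_i$ for $i \leq n-1$ preserve this), so any $\alpha$ with $g_\alpha \neq 0$ must have $\supp(\alpha) \subseteq [n-1]$. For the bound $\alpha_i \leq n-i$, I would use that $\fGb_w$ is a $\mathbb{Z}$-linear combination (over $\beta$) of monomials dividing $x_1^{n-1} x_2^{n-2}\cdots x_{n-1}$; this too follows by downward induction on $\inv(w)$ from the base case, using that $\partial_i\bigl((1+\beta x_{i+1}) f\bigr)$ sends a monomial dividing $x_1^{n-1}\cdots x_{n-1}$ to a sum of such monomials (the divided difference lowers or preserves the $x_i$-degree and the factor $(1+\beta x_{i+1})$ raises the $x_{i+1}$-degree by at most one, and one checks the staircase bound is respected because $w(i) < w(i+1)$ forces the relevant exponents to start below the staircase). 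Since $x^\alpha$ is a monomial of $\fGb_w$, it divides $x_1^{n-1}\cdots x_{n-1}$, giving exactly $\alpha_i \leq n-i$ for all $i$, i.e.\ $\alpha \in C_n$.

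The main obstacle I anticipate is making the staircase-divisibility claim for $\fGb_w$ fully rigorous: one must verify that the inductive step $\fGb_w = \partial_i\bigl((1+\beta x_{i+1})\fGb_{ws_i}\bigr)$ with $w(i) < w(i+1)$ preserves ``every monomial divides $x_1^{n-1}\cdots x_{n-1}$.'' The potentially delicate point is the $x_{i+1}$-exponent: multiplying by $x_{i+1}$ could a priori push it above $n-i-1$. The resolution is that $\partial_i$ is $s_i$-antisymmetric, so $\partial_i(h)$ only involves the antisymmetric part of $h$; combined with the fact that for permutations with $w(i)<w(i+1)$ the predecessor $\fGb_{ws_i}$ has controlled symmetry in $x_i, x_{i+1}$, one shows the output still divides the staircase monomial. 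Alternatively, one can sidestep this entirely by citing the known fact (e.g.\ from the geometry of Schubert varieties in the flag variety, or directly from Lascoux--Schützenberger) that $\fGb_w$ for $w \in S_n$ is supported on monomials dividing $x_1^{n-1} x_2^{n-2}\cdots x_{n-1}$, and then the corollary follows immediately by the leading-monomial argument above. Once the support of $\fGb_w$ is established, the conclusion that $g_\alpha \neq 0 \Rightarrow \alpha \in C_n$ is a short deduction from the triangularity of the Lascoux basis with respect to the tail lexicographic order.
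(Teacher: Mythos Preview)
Your approach is essentially the same as the paper's: start from the positive expansion of Theorem~\ref{T: Gro to Las}, observe that the leading monomial $x^\alpha$ of each $\fLb_\alpha$ in the expansion must occur in $\fGb_w$, and then use a bound on the monomial support of $\fGb_w$ to force $\alpha\in C_n$. Two small points of comparison. First, where you attempt to derive the staircase bound on monomials of $\fGb_w$ by an inductive analysis of $\partial_i\bigl((1+\beta x_{i+1})\,\cdot\,\bigr)$ (and correctly flag this as delicate), the paper simply cites the Fomin--Kirillov pipe dream formula~\cite{FK}, from which the bound is immediate. Second, the cleaner justification for ``$x^\alpha$ appears in $\fGb_w$'' is \emph{positivity}, not triangularity: since the $g_\alpha\in\mathbb{Z}_{\geq 0}[\beta]$ and each $\fLb_\alpha$ has nonnegative coefficients, the term $x^\alpha$ contributed by $\fLb_\alpha$ cannot be cancelled. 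A pure triangularity argument only pins down the maximal $\alpha$ directly and would require an iterative subtraction (together with the observation that $\fLb_\gamma$ for $\gamma\in C_n$ has staircase-bounded support) to handle the rest; the paper avoids this by using positivity in a single line.
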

\begin{proof}
By Theorem~\ref{T: Gro to Las},
we can expand $\fGb_w$ into a sum of Lascoux polynomials.
We just need to make sure for each $\fLb_{\alpha}$ appearing
in the expansion with a nonzero coefficient,
we have $\alpha \in C_n$.

We know the monomial $x^\alpha$ is the leading monomial of 
$\kappa_\alpha$, so $x^\alpha$ appears in $\fLb_\alpha$.
Since all coefficients in the sum are positive,
we know $x^\alpha \beta^m$ appears in $\fGb_w$ for
some $m \in \mathbb{Z}_{\geq 0}$.
By the monomial expansion of $\fGb_w$ 
given by Fomin and Kirillov~\cite{FK},
we have $\alpha \in C_n$.
\end{proof}

By this corollary and Lemma~\ref{L: Expands positively},
we have the following. 
\begin{cor}
\label{C: Top Gro to Top Las for n}
For $w \in S_n$, 
$\topGro_w$ expands positively 
into $\{\topLas_\alpha: \alpha \in C_n\}$.
\end{cor}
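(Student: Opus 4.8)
The plan is to assemble the two results that immediately precede the statement. First I would invoke Corollary~\ref{C: Gro to Las for n} to write $\fGb_w = \sum_{\alpha \in C_n} g_\alpha \fLb_\alpha$ for suitable $g_\alpha \in \mathbb{Z}_{\geq 0}[\beta]$; this is exactly the assertion that $\fGb_w$ expands positively into $\{\fLb_\alpha : \alpha \in C_n\}$ in the sense of the definition recalled in the excerpt, so in particular every Lascoux polynomial occurring with nonzero coefficient is indexed by an element of $C_n$.

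Next I would apply Lemma~\ref{L: Expands positively} with $f = \fGb_w$ and the family $\{f_i\}$ taken to be $\{\fLb_\alpha : \alpha \in C_n\}$. The lemma gives that $\widehat{\fGb_w}$ expands positively into $\{\widehat{\fLb_\alpha} : \alpha \in C_n\}$. Finally, by the definitions of the Castelnuovo--Mumford polynomial and the top Lascoux polynomial we have $\widehat{\fGb_w} = \topGro_w$ and $\widehat{\fLb_\alpha} = \topLas_\alpha$, which is precisely the desired conclusion.

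The only delicate point — and it is already absorbed into Lemma~\ref{L: Expands positively} — is that passing from an identity among polynomials to an identity among their top $\beta$-degree components could a priori destroy terms through cancellation. Positivity rules this out: since each $g_\alpha$ lies in $\mathbb{Z}_{\geq 0}[\beta]$ and each $\fLb_\alpha$ has nonnegative coefficients, the top $\beta$-degree of $\fGb_w$ equals $\max_\alpha(\deg_\beta g_\alpha + \deg_\beta \fLb_\alpha)$ and no surviving monomial of $\topGro_w$ is killed. Consequently there is no real obstacle in this corollary; its entire content is the bookkeeping of combining Corollary~\ref{C: Gro to Las for n} with Lemma~\ref{L: Expands positively}, and I would present the proof in a single short paragraph to that effect.
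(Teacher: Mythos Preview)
Your proposal is correct and matches the paper's own argument exactly: the paper simply writes that the corollary follows from Corollary~\ref{C: Gro to Las for n} together with Lemma~\ref{L: Expands positively}. Your additional paragraph about why positivity prevents cancellation is a helpful gloss on what Lemma~\ref{L: Expands positively} is doing, but the paper does not spell this out.
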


Now we are ready to give another basis of $\widehat{V}_n$:
\begin{prop}
\label{P: Top Las basis}
The space $\widehat{V}_n$ is also $\Q\textrm{-span}\{\topLas_\alpha: \alpha \in C_n\}$. 
It has a basis 
$\{ \topLas_\alpha: \alpha \in C_n \textrm{ is snowy}\}$.
\end{prop}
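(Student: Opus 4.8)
The plan is to deduce Proposition~\ref{P: Top Las basis} from the already-established machinery. First I would record the easy containments: by Corollary~\ref{C: Top Gro to Top Las for n}, every $\topGro_w$ with $w \in S_n$ lies in $\Q\textrm{-span}\{\topLas_\alpha : \alpha \in C_n\}$, so $\widehat{V}_n \subseteq \Q\textrm{-span}\{\topLas_\alpha : \alpha \in C_n\}$. For the reverse containment, the natural route is to use the Shimozono--Yu expansion (Theorem~\ref{T: Gro to Las}) together with a triangularity argument; alternatively, and more cleanly, I expect one can show directly that each $\topLas_\alpha$ with $\alpha \in C_n$ is a linear combination of $\topGro_w$ with $w \in S_n$. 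The cleanest way to organize this is to prove the second sentence of the proposition first --- that $\{\topLas_\alpha : \alpha \in C_n \text{ snowy}\}$ is a basis of $\Q\textrm{-span}\{\topLas_\alpha : \alpha \in C_n\}$ --- and then count.

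So the core steps are as follows. Step 1: by Theorem~\ref{T: Top Las}(b), two top Lascoux polynomials are proportional iff they have the same $\rajcode$, and by Lemma~\ref{L: snowy weak composition in equivalence class} each $\sim$-class contains exactly one snowy composition, which is the entrywise minimum of the class; hence among $\{\topLas_\alpha : \alpha \in C_n\}$ the snowy ones already span everything (every $\topLas_\gamma$ is a scalar multiple of $\topLas_{\alpha}$ for the snowy representative $\alpha$, and I must check $\alpha \in C_n$ whenever $\gamma \in C_n$ --- this follows since $\alpha \leq \gamma$ entrywise and $\supp(\alpha)\subseteq\supp(\gamma)$, so the defining inequalities of $C_n$ are inherited). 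Step 2: the snowy $\topLas_\alpha$ with $\alpha \in C_n$ are linearly independent, because by Theorem~\ref{T: Top Las}(a) they have pairwise distinct leading monomials $x^{\rajcode(\alpha)}$ (distinct $\rajcode$'s for distinct snowy $\alpha$, again by (b) and uniqueness of snowy representatives), and polynomials with distinct leading monomials in a fixed monomial order are linearly independent. So $\{\topLas_\alpha : \alpha \in C_n \text{ snowy}\}$ is a basis of $\Q\textrm{-span}\{\topLas_\alpha : \alpha \in C_n\}$, and its dimension is $|\{\alpha \in C_n : \alpha \text{ snowy}\}|$.

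Step 3 is the dimension count that forces equality $\widehat{V}_n = \Q\textrm{-span}\{\topLas_\alpha : \alpha \in C_n\}$. We already know $\dim \widehat{V}_n = B_n$ (from Theorem~\ref{thm:psw} plus the Cla\'esson count of inverse fireworks permutations, as recalled at the start of \S\ref{S: Space}). On the other side, Lemma~\ref{L: snowy biject rook} gives a bijection $\dark(\cdot)$ from snowy weak compositions to $\Rook_+$; under the constraints defining $C_n$ --- namely $\supp(\alpha)\subseteq[n-1]$ and $\alpha_i \leq n-i$ --- a snowy $\alpha$ corresponds exactly to a non-attacking rook placement $\dark(\alpha)$ fitting inside $\Stair_n = D((n-1,\dots,1))$, i.e. to an element of $\Rook_n$. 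Hence $|\{\alpha \in C_n : \alpha \text{ snowy}\}| = |\Rook_n| = B_n$. Now the chain $\widehat{V}_n \subseteq \Q\textrm{-span}\{\topLas_\alpha : \alpha \in C_n\}$ is an inclusion of spaces of equal dimension $B_n$, so it is an equality, and the snowy family is a basis of $\widehat{V}_n$ as claimed.

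The main obstacle I anticipate is Step 3's bijective bookkeeping: one must verify carefully that the $C_n$ conditions translate under $\dark(\cdot)$ precisely to ``rook diagram contained in $\Stair_n$'' (a snowy $\alpha$ with $(r,\alpha_r)\in\dark(\alpha)$ sits in $\Stair_n$ iff $\alpha_r \leq n-r$ and $\alpha_r>0 \Rightarrow r \leq n-1$, matching the definition of $C_n$ verbatim), and that $|\Rook_n|=B_n$ (stated as ``an exercise'' in \S\ref{SS: Bell}, and also provided by the bijection of~\cite{BCHR}). A secondary subtlety is ensuring the snowy representative of a $\sim$-class of an element of $C_n$ stays in $C_n$; since Lemma~\ref{L: snowy weak composition in equivalence class} gives entrywise-minimality, this is immediate. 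Everything else is formal linear algebra built on Theorem~\ref{T: Top Las} and the already-known $\dim\widehat{V}_n = B_n$.
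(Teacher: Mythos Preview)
Your proposal is correct and follows essentially the same approach as the paper's proof: the containment $\widehat{V}_n \subseteq \Q\textrm{-span}\{\topLas_\alpha:\alpha\in C_n\}$ via Corollary~\ref{C: Top Gro to Top Las for n}, reduction to snowy representatives via Lemma~\ref{L: snowy weak composition in equivalence class} and Theorem~\ref{T: Top Las}, linear independence from distinct leading monomials, and the dimension count $B_n = |\Rook_n|$ via the bijection $\dark(\cdot)$ of Lemma~\ref{L: snowy biject rook}. You are in fact slightly more careful than the paper in explicitly checking that the snowy representative of $\gamma\in C_n$ remains in $C_n$ and that the restriction of $\dark(\cdot)$ lands exactly in $\Rook_n$, points the paper passes over with ``Clearly.''
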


\begin{proof}
By Corollary~\ref{C: Top Gro to Top Las for n} ,
$\widehat{V}_n$ is a subspace of 
$\Q\textrm{-span}\{\topLas_\alpha: \alpha \in C_n\}$.
By Lemma~\ref{L: snowy weak composition in equivalence class},
for any $\alpha \in C_n$,
we can find a snowy $\gamma \in C_n$
such that $\gamma \sim \alpha$.
Then by Theorem~\ref{T: Top Las},
$\topLas_\alpha$ is a scalar
multiple of $\topLas_\gamma$.
Thus, 
$\Q\textrm{-span}\{\topLas_\alpha: \alpha \in C_n\}$ is a subspace of 
$\Q\textrm{-span}\{\topLas_\alpha: \alpha \in C_n \textrm{ is snowy}\}$.
Notice that $\{\topLas_\alpha: \alpha \in C_n \textrm{ is snowy}\}$
is linear independent since its polynomials
have distinct leading terms 
by Theorem~\ref{T: Top Las}.

By~\cite[Theorem~1.4]{PSW}
$\widehat{V}_n$ has dimension $B_n$.
It remains to check the number of snowy weak compositions
in $C_n$ is also $B_n$.
In Lemma~\ref{L: snowy biject rook},
we show $\dark(\cdot)$ a bijection from
snowy weak compositions in $C_+$ to $\Rook_+$.
Clearly it restricts to a bijection from
$C_n$ to $\Rook_n$,
which has size $B_n$.
\end{proof}

We use the top Lascoux basis 
to derive $\Hilb(\widehat{V}_n; q)$. 
Let us translate 
the statistic $\raj(\cdot)$ 
on snowy weak compositions
to non-attacking rook diagrams.

\begin{defn}
Take $R \in \Rook_+$.
Define the \definition{Northwest number} of $R$,
denoted as $\NW(R) := \raj(\alpha)$, 
where $\alpha$ is any weak composition
with $\dark(\alpha) = R$.
\end{defn}

Equivalently, we may compute $\NW(R)$ as follows:
For each $(r,c) \in R$, we mark all cells
weakly above it and to its left.
By Lemma~\ref{L: dark recovers snow},
these marked cells agree with the underlying diagram of $\snow(D(\alpha))$ for any $\alpha$ with $\dark(\alpha) = R$.
Then $\NW(R)$ is just the number of marked cells.
Comparing this statistic with $\GR_n(\cdot)$
defined in Subsection~\ref{SS: Bell},
we have the following connection.

\begin{rem}
\label{R: NW and GR}
Take $R \in \Rook_n$.
Then $\GR_n(R) = |\Stair_n| - \NW(R) = {\binom{n}{2}} - \NW(R)$.
\end{rem}

Finally, we can derive an expression
for the degree generating function of $\widehat{V}_n$.

\begin{prop}
\label{T: F_n}
We have
$$
\Hilb(\widehat{V}_n;q) 
= q^{{\binom{n}{2}}} B_{n}(q^{-1})
= \rev(B_n(q)),
$$
where $\rev(\cdot)$ is the operator that
reverse the coefficients of a polynomial.
In other words, it sends a polynomial $f(q)$
of degree $d$ to $q^d f(q^{-1})$.
\end{prop}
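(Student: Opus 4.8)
The plan is to use the snowy top Lascoux basis of $\widehat{V}_n$ established in Proposition~\ref{P: Top Las basis} and to track the degree of each basis element via the rook-theoretic statistic $\NW(\cdot)$. Recall that $\{\topLas_\alpha : \alpha \in C_n \textrm{ is snowy}\}$ is a homogeneous basis of $\widehat{V}_n$, and by Lemma~\ref{L: Snowy degree} (together with Lemma~\ref{L: dark, rajcode and raj of snowy weak composition}) the degree of $\topLas_\alpha$ is exactly $\raj(\alpha)$. Therefore
$$
\Hilb(\widehat{V}_n;q) = \sum_{\substack{\alpha \in C_n \\ \alpha \text{ snowy}}} q^{\raj(\alpha)}.
$$
First I would push this sum through the bijection $\dark(\cdot)$ of Lemma~\ref{L: snowy biject rook}, which restricts to a bijection from snowy weak compositions in $C_n$ onto $\Rook_n$ (as noted in the proof of Proposition~\ref{P: Top Las basis}). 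Since $\NW(\dark(\alpha)) = \raj(\alpha)$ by the definition of the Northwest number, the sum becomes $\sum_{R \in \Rook_n} q^{\NW(R)}$.

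Next I would invoke Remark~\ref{R: NW and GR}, which states $\GR_n(R) = \binom{n}{2} - \NW(R)$ for every $R \in \Rook_n$, i.e.\ $\NW(R) = \binom{n}{2} - \GR_n(R)$. Substituting,
$$
\Hilb(\widehat{V}_n;q) = \sum_{R \in \Rook_n} q^{\binom{n}{2} - \GR_n(R)} = q^{\binom{n}{2}} \sum_{R \in \Rook_n} q^{-\GR_n(R)} = q^{\binom{n}{2}} B_n(q^{-1}),
$$
where the last equality is the Garsia--Remmel identity~(\ref{EQ: GR Bell}). Since $B_n(q)$ has degree $\binom{n}{2}$ (as recorded after~(\ref{EQ: GR Bell}), because $\GR_n(\emptyset) = \binom{n}{2}$), multiplying $B_n(q^{-1})$ by $q^{\binom{n}{2}}$ is precisely the coefficient-reversal operator $\rev$, giving $q^{\binom{n}{2}} B_n(q^{-1}) = \rev(B_n(q))$. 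This yields both claimed forms.

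The only genuinely substantive inputs are the homogeneous basis structure and the degree formula from Section~\ref{S: weak composition}, and the translation Remark~\ref{R: NW and GR} between $\NW$ and $\GR_n$; everything else is bookkeeping. I expect no real obstacle here, since all the hard work (the existence and leading-term results for $\topLas_\alpha$, the bijection with $\Rook_+$, and the Garsia--Remmel formula) is already in place. The one point to state carefully is that the basis $\{\topLas_\alpha\}$ is genuinely homogeneous with $\topLas_\alpha$ of degree $\raj(\alpha)$ rather than merely having leading monomial $x^{\rajcode(\alpha)}$ — but this is exactly Lemma~\ref{L: Snowy degree}, so it can be cited directly.
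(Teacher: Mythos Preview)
Your proposal is correct and follows essentially the same route as the paper: start from the snowy top Lascoux basis of Proposition~\ref{P: Top Las basis}, translate the sum via the bijection $\dark(\cdot)$ to $\sum_{R\in\Rook_n}q^{\NW(R)}$, apply Remark~\ref{R: NW and GR} and the Garsia--Remmel identity~(\ref{EQ: GR Bell}), and conclude using $\deg B_n(q)=\binom{n}{2}$. Your extra care in citing Lemma~\ref{L: Snowy degree} for the homogeneity and degree of $\topLas_\alpha$ is a nice touch that the paper leaves implicit.
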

\begin{proof}
By Prop~\ref{P: Top Las basis},
$
\Hilb(\widehat{V}_n;q) = \quad \sum_{\alpha} q^{\raj(\alpha)}$
where the sum is over snowy $\alpha \in C_n$.
Apply the bijection $\dark(\cdot)$
to $\alpha$ in the summation, we have
\begin{align*}
\Hilb(\widehat{V}_n;q) & = \sum_{R \in \Rook_n} q^{\NW(R)} =\sum_{R \in \Rook_n} q^{{\binom{n}{2}} - \GR_n(R)}\\
&=q^{\binom{n}{2}}\sum_{R \in \Rook_n} q^{-\GR_n(R)} = q^{{\binom{n}{2}}} B_{n}(q^{-1})\,,
\end{align*}
where the second equality is by 
Remark~\ref{R: NW and GR}
and the last equality is by (\ref{EQ: GR Bell}).
Since $B_n(q)$ has degree ${\binom{n}{2}}$, we have $\Hilb(\widehat{V}_n;q) = \rev(B_n(q))$.
\end{proof}

\subsection{Computing  \texorpdfstring{$\Hilb(\widehat{V};q)$}{TEXT}}
\label{subsec.v.hat}

First, we show the top Lascoux polynomials also span the space $\widehat{V}$.
\begin{prop}
\label{P: V span}
We have $\Q\textrm{-span}\{\topLas_\alpha \mid \alpha \in C_+\} = \widehat{V}$.
\end{prop}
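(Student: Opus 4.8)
The plan is to prove the two inclusions separately. For the inclusion $\widehat{V} \subseteq \Q\textrm{-span}\{\topLas_\alpha \mid \alpha \in C_+\}$, I would simply invoke Corollary~\ref{C: top Gro to top Las}: for every $w \in S_+$, the polynomial $\topGro_w$ expands positively into $\{\topLas_\alpha : \alpha \in C_+\}$, so in particular each $\topGro_w$ lies in the $\Q$-span of the top Lascoux polynomials. Since $\widehat{V}$ is by definition the $\Q$-span of all $\topGro_w$ with $w \in S_+$, this gives one containment immediately.

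For the reverse inclusion $\Q\textrm{-span}\{\topLas_\alpha \mid \alpha \in C_+\} \subseteq \widehat{V}$, the task is to show that each individual $\topLas_\alpha$ already lies in $\widehat{V}$. The natural approach is to first reduce to the snowy case: given any $\alpha \in C_+$, Lemma~\ref{L: snowy weak composition in equivalence class} produces a snowy $\gamma$ with $\gamma \sim \alpha$, and then Theorem~\ref{T: Top Las}(b) (equivalently Lemma~\ref{L: Top Las similar}) gives $\topLas_\alpha = c\,\topLas_\gamma$ for some nonzero scalar $c$. So it suffices to show $\topLas_\gamma \in \widehat{V}$ for every snowy $\gamma$. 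Now choose $n$ large enough that $\supp(\gamma) \subseteq [n-1]$; after possibly enlarging $n$ further we may also arrange $\gamma_i \leq n - i$ for all $i \in [n-1]$ (since $\gamma$ is snowy its positive entries are distinct, so only finitely many constraints need to be satisfied and they can be met simultaneously by taking $n$ sufficiently large). Then $\gamma \in C_n$, and Proposition~\ref{P: Top Las basis} asserts that $\{\topLas_\alpha : \alpha \in C_n \text{ is snowy}\}$ is a basis of $\widehat{V}_n$; in particular $\topLas_\gamma \in \widehat{V}_n \subseteq \widehat{V}$. Combining the two inclusions finishes the proof.

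The only step requiring genuine care is verifying that an arbitrary snowy $\gamma$ can be placed inside some $C_n$, i.e.\ that the conditions $\supp(\gamma) \subseteq [n-1]$ and $\gamma_i \leq n-i$ for $i \in [n-1]$ hold for $n \gg 0$. I expect this to be the main (mild) obstacle: one should check that whenever $n$ is increased the earlier constraints are preserved, so that a single sufficiently large $n$ works. Concretely, if $\gamma$ has last positive entry in position $k$ and maximum entry $M$, then any $n \geq \max\{k+1,\ M+1\}$ works for $\supp(\gamma)\subseteq[n-1]$, and for the inequality $\gamma_i \leq n-i$ one needs $n \geq \max_{i \in \supp(\gamma)} (\gamma_i + i)$, which is a finite maximum since $\gamma$ has finite support. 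Taking $n$ at least this large places $\gamma$ in $C_n$, and the rest is a direct citation of the already-established results.
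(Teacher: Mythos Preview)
Your proof is correct and follows essentially the same approach as the paper: both directions are handled the same way, using Corollary~\ref{C: top Gro to top Las} for one inclusion and placing a given weak composition into some $C_n$ so that Proposition~\ref{P: Top Las basis} applies for the other. The only difference is that your reduction to the snowy case is unnecessary: Proposition~\ref{P: Top Las basis} already asserts $\widehat{V}_n = \Q\textrm{-span}\{\topLas_\alpha : \alpha \in C_n\}$ for \emph{all} $\alpha \in C_n$, so once you observe (as you do) that any $\alpha \in C_+$ lies in $C_n$ for $n$ large enough, you are done without passing through a snowy representative.
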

\begin{proof}
By Corollary~\ref{C: top Gro to top Las},
$\widehat{V}$ is in the $\Q\textrm{-span}$ of 
$\{\topLas_\alpha: \alpha \in C_+\}$.
Now consider $\alpha \in C_+$.
There exists $n$ large enough such that
$\alpha \in C_n$.
Then $\topLas_\alpha \in \widehat{V}_n \subset \widehat{V}$.
\end{proof}

\begin{cor}
The space $\widehat{V}$
has a basis 
$\{ \topLas_\alpha: \alpha \in C_+ \textrm{ is snowy}\}.$
\end{cor}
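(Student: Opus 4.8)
The plan is to combine the spanning statement of Proposition~\ref{P: V span} with the classification of rajcode equivalence classes, exactly paralleling the proof of Proposition~\ref{P: Top Las basis} but now in the global (infinite-dimensional) setting. First I would establish spanning. By Proposition~\ref{P: V span}, $\widehat{V}$ is spanned by $\{\topLas_\alpha : \alpha \in C_+\}$. Given any $\alpha \in C_+$, Lemma~\ref{L: snowy weak composition in equivalence class} provides a unique snowy $\gamma$ with $\gamma \sim \alpha$, and Theorem~\ref{T: Top Las}(b) then gives $\topLas_\alpha = c\,\topLas_\gamma$ for some nonzero scalar $c$. Hence every $\topLas_\alpha$ lies in $\Q\textrm{-span}\{\topLas_\gamma : \gamma \textrm{ is snowy}\}$, so $\{\topLas_\gamma : \gamma \textrm{ is snowy}\}$ already spans $\widehat{V}$.

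Next I would prove linear independence. Suppose $\sum_{i} c_i \topLas_{\gamma_i} = 0$ is a finite linear combination with the $\gamma_i$ pairwise distinct snowy weak compositions and some $c_i \neq 0$. By Lemma~\ref{L: snowy weak composition in equivalence class}, distinct snowy weak compositions lie in distinct $\sim$-classes, so the compositions $\rajcode(\gamma_i)$ are pairwise distinct; by Theorem~\ref{T: Top Las}(a), $x^{\rajcode(\gamma_i)}$ is the tail-lexicographic leading monomial of $\topLas_{\gamma_i}$. Picking an index $j$ for which $x^{\rajcode(\gamma_j)}$ is the largest monomial among those $\gamma_i$ with $c_i \neq 0$, this monomial occurs in the sum with coefficient $c_j \neq 0$ and cannot be cancelled by any other term, contradicting that the sum vanishes. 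Therefore $\{\topLas_\gamma : \gamma \textrm{ is snowy}\}$ is linearly independent, and combined with the spanning statement it is a basis of $\widehat{V}$.

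There is essentially no real obstacle: the corollary is an immediate consequence of Proposition~\ref{P: V span} and Theorem~\ref{T: Top Las}, and is the direct global analogue of Proposition~\ref{P: Top Las basis}. The only point meriting a word of care is that $\widehat{V}$ is infinite-dimensional, so linear independence must be argued via finite linear combinations together with the leading-monomial argument above, rather than by the dimension count ($\dim \widehat{V}_n = B_n = |\Rook_n|$) that was available for $\widehat{V}_n$.
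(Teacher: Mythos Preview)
Your proposal is correct and follows exactly the approach implicit in the paper, which states the corollary without proof immediately after Proposition~\ref{P: V span}: reduce the spanning set to snowy compositions via Theorem~\ref{T: Top Las}(b)--(c), and observe linear independence from the distinct leading monomials guaranteed by Theorem~\ref{T: Top Las}(a). Your remark that the infinite-dimensional setting precludes the dimension count used in Proposition~\ref{P: Top Las basis} and instead requires the direct leading-monomial argument is the one nontrivial point, and you handle it correctly.
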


With the top Lascoux basis, we have
$$
\Hilb(\widehat{V};q)  \quad = \quad \sum_{\substack{\alpha \: \in \: C_+, \\ 
\alpha \textrm{ is snowy}}} q^{\raj(\alpha)} \quad
= \quad \sum_{R \in \Rook_+} q^{\NW(R)},$$
where the second equality is obtained by 
applying $\dark(\cdot)$ on $\alpha$ in the second expression.
On the other hand, since $\widehat{V} = \bigcup_{n \geq 1} \widehat{V}_n$, 

 $\Hilb(\widehat{V};q)$ is the limit of $\Hilb(\widehat{V}_n;q)$
as $n$ goes to infinity. 
According to OEIS, 
coefficients in $B_n(q)$ are in \href{https://oeis.org/A126347}{A126347} and 
the coefficients of $\Hilb(\widehat{V};q)$ are in \href{https://oeis.org/A126348}{A126348}. 
A formula for $\Hilb(\widehat{V};q)$ in OEIS is given by Jovovic: 
$\prod_{m > 0} (1 + \frac{q^m}{1 - q})$.
For completeness, we check this rule using our formula
of $\Hilb(\widehat{V};q)$ involving snowy weak compositions.

\begin{prop}
\label{T: F}
We have
$$\Hilb(\widehat{V};q) = \sum_{\alpha \text{ is snowy}} q^{\raj(\alpha)}
= \prod_{m > 0} \left( 1 + \frac{q^m}{1 - q} \right)
$$
\end{prop}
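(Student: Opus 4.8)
The plan is to compute $\Hilb(\widehat{V};q) = \sum_{\alpha \text{ snowy}} q^{\raj(\alpha)}$ directly via Lemma~\ref{L: dark, rajcode and raj of snowy weak composition}(\ref{snowy.weak.3}), which gives $\raj(\alpha) = |\alpha| + |\{(r,r'): r < r', \alpha_r < \alpha_{r'}\}|$. First I would re-encode a snowy weak composition by its set of positive values together with their positions. Since the positive entries of a snowy $\alpha$ are distinct, list them as a set $\{v_1 < v_2 < \cdots < v_k\}$ of positive integers, and record which row each value sits in. Because the inversion-type statistic only depends on the relative order of rows versus the order of values, a cleaner bookkeeping is: for each positive value $v$ used, let its row be $r(v)$; the contribution of $v$ to $\raj(\alpha)$ is $v$ (from $|\alpha|$) plus the number of larger values placed in earlier rows plus $0$, but it is easier to split the inversion count by summing over pairs. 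Equivalently, process the distinct values in increasing order $v_1 < \cdots < v_k$ and track, for each, how many previously-placed (hence smaller) values lie in later rows.

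The key step is to organize the sum as an infinite product over the values $m = 1, 2, 3, \dots$, where the factor for $m$ records whether $m$ appears as a positive entry and, if so, contributes $q^m$ together with a factor accounting for the ``row-inversions'' it forms with the previously committed smaller values. Concretely, I would argue: build a snowy $\alpha$ by deciding, for each $m \geq 1$ in turn, whether $m$ is a part, and if so, inserting it into the list of already-chosen rows. If smaller values have already been assigned to some set of rows, a new largest value $m$ can be slotted strictly before $j$ of those existing rows (for any $j \geq 0$), and each such choice contributes an extra $q^m$ per row it jumps over. Summing $q^m \sum_{j \geq 0} q^{mj}$ — wait, more carefully: the extra inversions created equal the number of smaller values in later rows, and each smaller value already ``knows'' its row; placing $m$ before $j$ of them costs $q^{m \cdot 0}$ for $m$'s own weight $q^m$ times... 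The precise factor is $1 + \frac{q^m}{1-q}$: the $1$ for ``$m$ is not a part,'' and $\frac{q^m}{1-q} = q^m(1 + q + q^2 + \cdots)$ for ``$m$ is a part, weight $q^m$, and it may form $0, 1, 2, \dots$ row-inversions with the already-placed smaller values,'' each such inversion being independently available because there are arbitrarily many rows to the right. I would make this rigorous by induction on the largest part, showing the generating function after processing values $\{1, \dots, M\}$ (with the convention that rows can still be inserted freely among infinitely many later slots) equals $\prod_{m=1}^{M}(1 + \frac{q^m}{1-q})$.

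The main obstacle I anticipate is making the ``infinitely many rows available'' argument airtight: a snowy weak composition lives in a fixed (finite) set of rows, so when I claim the largest value $m$ can jump over any number $j \geq 0$ of existing smaller values, I must confirm that this truly corresponds to a geometric series $\sum_{j \geq 0} q^{mj}$ and not a truncated one — the point being that the number of previously-placed values is unbounded as we range over all snowy compositions, so no truncation occurs in the limit. The clean way around this is to not think of rows at all: instead, encode a snowy composition by the multiset... no, by the sequence of ``gaps,'' i.e., for the increasing list of distinct parts $v_1 < \cdots < v_k$, record the number $g_i \geq 0$ of parts among $v_1, \dots, v_{i-1}$ that lie in a row after $v_i$'s row; conversely any sequence $(v_1 < \cdots < v_k;\ g_1, \dots, g_k)$ with $0 \le g_i \le i-1$ reconstructs the row-placement up to row-order-preserving relabeling, and $\raj = \sum_i (v_i + g_i)$. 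Then $\Hilb(\widehat V;q) = \sum_{k \ge 0} \sum_{v_1 < \cdots < v_k} q^{v_1 + \cdots + v_k} \prod_{i=1}^{k} [i]_q$ — and I would massage this last sum into the product using the standard identity $\prod_{m \ge 1}(1 + y t^m) = \sum_{k \ge 0} t^{\binom{k+1}{2}} y^k / \prod_{i=1}^k (1 - t^i)$ type manipulation, or more directly verify $\prod_{m>0}(1 + \frac{q^m}{1-q}) = \sum_{k \ge 0} \frac{1}{(1-q)^k}\sum_{v_1 < \cdots < v_k} q^{v_1 + \cdots + v_k}$ and match term by term with the snowy sum after checking $\prod_{i=1}^k [i]_q \cdot (\text{something})$ — here a short computation comparing the coefficient extraction on both sides finishes it. I expect the bijective/encoding lemma (snowy compositions $\leftrightarrow$ decorated increasing sequences) to be the part needing the most care, while the generating-function identity itself is routine.
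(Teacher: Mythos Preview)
Your first idea---induction on the largest part $M$---is the paper's approach, and it works: one prepends $M$ to a snowy composition with entries $<M$ (this shifts all rows down and raises $\raj$ by exactly $M$) and then slides $M$ down one row at a time via $s_i$ (each slide raising $\raj$ by $1$, by Corollary~\ref{C: snowy changes}), giving the factor $1+q^M/(1-q)$. Had you made that bijection precise you would be done.

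Your ``clean way,'' however, has a genuine error. The formula in Lemma~\ref{L: dark, rajcode and raj of snowy weak composition}(\ref{snowy.weak.3}) counts \emph{all} pairs $(r,r')$ with $r<r'$ and $\alpha_r<\alpha_{r'}$, including those with $\alpha_r=0$; your statistic $g_i$ only sees pairs of positive parts, so $\raj(\alpha)\neq\sum_i(v_i+g_i)$ in general. Already $\alpha=(0,1)$ has $\raj(\alpha)=2$ while your formula gives $v_1+g_1=1+0=1$. Relatedly, your sequence $(v_1<\cdots<v_k;\,g_1,\dots,g_k)$ only determines $\alpha$ ``up to row-order-preserving relabeling,'' as you note---but distinct snowy compositions in the same such class have different $\raj$, so this cannot be a weight-preserving encoding of the sum you want. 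Concretely, your generating function $\sum_{k}\sum_{v_1<\cdots<v_k}q^{\sum v_i}\prod_{i=1}^k[i]_q$ equals $1+q+q^2+2q^3+\cdots$, not $\Hilb(\widehat{V};q)=1+q+2q^2+4q^3+\cdots$; the discrepancy between $\prod_{i=1}^k[i]_q$ and $(1-q)^{-k}$ that you hoped a ``short computation'' would close is real. The same misconception infects your earlier heuristic: the geometric series $1+q+q^2+\cdots$ does not count inversions of $m$ against previously placed positive values but rather encodes the row index $i$ of the new largest entry, which forms $i-1$ inversions with \emph{every} preceding row, zero or not---which is why the paper's prepend-and-shift map is the right bookkeeping device.
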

\begin{proof}
Let $\mathsf{snowy}(M)$ be the set of all snowy weak compositions
with the largest entry being at most $M$.
It suffices to show 
$$ \sum_{\alpha \in \mathsf{snowy}(M)} q^{\raj(\alpha)}=\prod_{m  > 0}^M\left(1 + \frac{q^m}{1 - q}\right).$$ 
We prove it by induction on $M$. The claim is immediate when $M = 0$ as both sides are $1$.

Now assume the claim above holds for some $M\geq 0$.
Let $\mathsf{snowy}(M)_i$ be the set of all snowy weak compositions
$\alpha$ such that its largest entry is $\alpha_i = M$.
With this notation, we can express $\mathsf{snowy}(M)$ recursively:
$$\mathsf{snowy}(M) = \mathsf{snowy}(M-1) \:\bigsqcup\: \left( \bigsqcup_{i \geq 1} \mathsf{snowy}(M)_i\right).$$

Next, we define a map
\begin{align*}
    \phi: \mathsf{snowy}(M-1) &\to \mathsf{snowy}(M)_1 \\
    (\alpha_1, \alpha_2, \dots) &\mapsto (M, \alpha_1, \alpha_2, \dots)
\end{align*}
It is straightforward to see that $\phi$ is a bijection.
Furthermore, we have $\raj(\phi(\alpha)) = \raj(\alpha) + M$.
To get $\mathsf{snowy}(M)_i$ for $i>1$, notice that 
the operator $s_i$ on the set of weak compositions is a 
bijection between $\mathsf{snowy}(M)_i$
and $\mathsf{snowy}(M)_{i+1}$.
For $\alpha \in \mathsf{snowy}(M)_i$, we have
$\raj(s_i(\alpha)) = \raj(\alpha) + 1$ 
by Corollary~\ref{C: snowy changes}.
Inductively, we have
$$
\sum_{\alpha \in \mathsf{snowy}(M)_i} q^{\raj(\alpha)}
= q^{M+i - 1}\sum_{\alpha \in \mathsf{snowy}(M-1)} q^{\raj(\alpha)}.
$$
Finally, 
\begin{align*}
\sum_{\alpha \in \mathsf{snowy}(M)} q^{\raj(\alpha)} & =
\sum_{\alpha \in \mathsf{snowy}(M-1)} q^{\raj(\alpha)}
\:+\: \left(\sum_{i \geq 1}q^{M+i-1}\right) \sum_{\alpha \in \mathsf{snowy}(M-1)} q^{\raj(\alpha)}\\
& = 
\left(1 + \sum_{i \geq 1}q^{M+i-1} \right) \sum_{\alpha \in \mathsf{snowy}(M-1)} q^{\raj(\alpha)} = \left(1+\frac{q^M}{1-q}\right)\prod_{m  > 0}^{M-1}\left(1 + \frac{q^m}{1 - q}\right).
\end{align*}
\end{proof}

\section{Open Problems and Future Directions}\label{sec.open}
We conclude with several open problems for future study.
In Section~\ref{subsec.shadow.diagram}, we present the connections between the
following three constructions:
\begin{itemize}
\item[-] Positions of dark clouds in $\snow(RD(w))$;
\item[-] First step of Viennot's geometric construction;
\item[-] Bumps in the first row during Schensted insertion. 
\end{itemize}

\begin{prob}
Find further connections between Viennot's geometric  construction of Schensted insertion
and $\snow(RD(w))$.
\end{prob}

\begin{prob}
Find further connections between Schensted insertion 
and $\snow(RD(w))$.
\end{prob}

The Grothendieck to Lascoux expansion, proven in~\cite{SY}, 
involves finding certain tableaux and computing their right keys.

\begin{prob}
Find a combinatorial formula for the expansion of Castelnuovo–Mumford polynomials
into top Lascoux polynomials indexed by snowy weak compositions. 
\end{prob}

Finding a combinatorial formula for the \definition{structure constants} $c^w_{u,v}$ 
for Grothendieck polynomials, defined as
\[
\fG_u \fG_v = \sum_{w} c^w_{u,v}\fG_w\,,
\] 
has been a long-standing open problem.
These coefficients have a geometric interpretation: They are the intersection 
numbers for the Schubert classes in the connective $K$-theory. 
If we consider only the top-degree terms on both sides, we get the structure 
constants for Castelnuovo–Mumford polynomials, which we denote as $\widehat{c^w_{uv}}$,
which are still non-negative integers.

\begin{prob}
Find a combinatorial formula for $\widehat{c^w_{uv}}$.
\end{prob}

The Grothendieck polynomials $\fG_w(\bm{x})$ are a specialization of the double 
Grothendieck polynomials $\fG_w(\bm{x},\bm{y})$ by setting $y_1=y_2 = \dots = 0$. 
In~\cite{KM}, Knutson and Miller introduced pipe dream rules for both
$\fG_w(\bm{x})$ and $\fG_w(\bm{x},\bm{y})$.
For Castelnuovo–Mumford polynomials $\widehat{\fG_w}(\bm{x})$, we can think they 
correspond to a subset of pipe dreams for $\fG_w(\bm{x})$.
In~\cite{PSW}, the authors proved a factorization of $\widehat{\fG_w}(\bm{x},\bm{y})$
into a $\bm{x}$-polynomial and a $\bm{y}$-polynomial, and they showed the 
the leading term is in fact,
\[
\bm{x}^{\rajcode(w)} \bm{y}^{\rajcode(w^{-1})},
\]
with coefficient $1$ by constructing a pipe dream associated with it iteratively.

\begin{prob}
Use the snow diagrams to give an explicit construction of pipe dreams for the leading term
in $\widehat{\fG_w}(\bm{x},\bm{y})$.
\end{prob}

In general, one can define a $K$-Kohnert polynomial for any diagram $D$:
\[
\kappa_D(\bm{x};\beta) := \sum_{D'\in \KKD(D)}\bm{x}^{\wt(D')} \beta^{\ex(D')}.
\]

\begin{prob}
Find characterizations of diagram $D$ such that the leading monomial of $\widehat{\kappa}_D$ is given by $\rajcode(D)$.
\end{prob}

\section{Appendix}
\label{appendix}
\begin{table}[ht]
\begin{tabular}{|l|l|}\hline
Permutation $w$ & $\fG_w$ ($\topGro_w$ in bold blue) \\ \hline
$e^\dagger$      & \cb{1}\\[1mm]
$2134^\dagger  = s_1$ & \cb{x_1} \\[1mm]
$1324^\dagger  = s_2$ & $(x_1+x_2)+\beta$\cb{x_1x_2}\\[1mm]
$2314 = s_1s_2$ & \cb{x_1x_2} \\[1mm]
$3124^\dagger  = s_2s_1$ & \cb{x_1^2}\\[1mm]
$3214^\dagger  = s_1s_2s_1$ & \cb{x_1^2x_2} \\[2mm]
$1243^\dagger = s_3$ & $(x_1+x_2+x_3)+\beta(x_1x_2+x_1x_3+x_2x_3) + \beta^2$\cb{x_1x_2x_3}\\[1mm]
$2143^\dagger = s_1s_3$ & $(x_1x_2+x_1x_3+x_1^2) + \beta(x_1x_2x_3+x_1^2x_2+x_1^2x_3)+\beta^2$\cb{ x_1^2x_2x_3}\\[1mm]
$1342 = s_2s_3$ & $(x_1x_2+x_1x_3+x_2x_3)+\beta$\cb{2x_1x_2x_3}\\[1mm]
$1423^\dagger = s_3s_2$ & $(x_1^2+x_2^2+x_1x_2)+\beta$\cb{(x_1^2x_2+x_1x_2^2)}\\[1mm]
$2341 = s_1s_2s_3$ & \cb{x_1x_2x_3} \\[1mm]
$2413^\dagger = s_1s_3s_2$ &  $(x_1x_2^2+x_1^2x_2)+\beta$\cb{x_1^2x_2^2}\\[1mm]
$3142 = s_2s_1s_3$ & $(x_1^2x_2+x_1^2x_3)+\beta$\cb{ x_1^2 x_2 x_3} \\[1mm]
$4123^\dagger = s_3s_2s_1$ & \cb{x_1^3} \\[1mm]
$1432^\dagger = s_3s_2s_3$ & $(x_1^2x_2+x_1x_2^2 + x_1^2x_3 + x_1x_2x_3 +x_2^2x_3)+\beta(x_1^2x_2^2 + 2x_1^2x_2x_3+2x_1x_2^2x_3)+\beta^2$\cb{x_1^2x_2^2x_3} \\[1mm]
$2431 = s_1s_3s_2s_3$ & $(x_1^2x_2x_3 + x_1x_2^2x_3)+\beta$\cb{x_1^2x_2^2x_3} \\[1mm]
$3241 = s_2s_1s_2s_3$ & \cb{x_1^2x_2x_3} \\[1mm]
$3412 = s_2s_1s_3s_2$ & \cb{x_1^2x_2^2} \\[1mm]
$4132^\dagger = s_3s_2s_1s_3$ & $(x_1^3x_2+x_1^3x_3)+\beta$\cb{x_1^3x_2x_3} \\[1mm]
$4213^\dagger = s_3s_2s_1s_2$ & \cb{x_1^3x_2} \\[1mm]
$3421 = s_2s_1s_3s_2s_3$ & \cb{x_1^2x_2^2x_3}\\[1mm]
$4231 = s_3s_2s_1s_2s_3$ & \cb{x_1^3x_2x_3} \\[1mm]
$4312^\dagger = s_3s_2s_1s_3s_2$ & \cb{x_1^3x_2^2} \\[1mm]
$4321^\dagger = s_3s_2s_1s_3s_2s_3$ & \cb{x_1^3x_2^2x_3} \\ \hline
\end{tabular}
\caption{Grothendieck polynomials in $S_4$. $\dagger$ refers to inverse fireworks permutations.}
\label{table.grothendieck}
\end{table}

\begin{table}[ht]
\begin{tabular}{|l|l|}\hline
Weak compositions $\alpha$ & $\fL_\alpha$ ($\topLas_\alpha$ in bold blue)\\ \hline
$(0,0,0)^\dagger$     & \cb{1} \\[1mm]
$(1,0,0)^\dagger$     & \cb{x_1}\\[1mm]
$(0,1,0)^\dagger$     & $(x_1+x_2)+\beta$ \cb{ x_1x_2}\\[1mm]
$(1,1,0)$     & \cb{x_1x_2}\\[1mm]
$(2,0,0)^\dagger$     & \cb{x_1^2}\\[1mm]
$(2,1,0)^\dagger$     &  \cb{x_1^2x_2}\\[2mm]
$(0,0,1)^\dagger$     & $(x_1+x_2+x_3) + \beta(x_1x_2+x_1x_3+x_2x_3)+\beta^2$\cb{ x_1x_2x_3} \\[1mm]
$(0,1,1)$     & $(x_1x_2+x_1x_3+x_2x_3)+\beta$\cb{ 2x_1x_2x_3}\\[1mm]
$(0,2,0)^\dagger$     & $(x_1^2+x_1x_2+x_2^2)+\beta $ \cb{(x_1^2x_2+x_1x_2^2)}\\[1mm]
$(1,0,1)$     & $(x_1x_2+x_1x_3)+\beta$\cb{ x_1x_2x_3}\\[1mm]
$(3,0,0)^\dagger$     & \cb{x_1^3}\\[1mm]
$(2,0,1)^\dagger$     & $(x_1^2x_2+x_1^2x_3)+\beta$\cb{ x_1^2x_2x_3}\\[1mm]
$(1,2,0)^\dagger$     & $(x_1x_2^2+x_1^2x_2)+\beta$\cb{ x_1^2x_2^2}\\[1mm]
$(0,2,1)^\dagger$     & $(x_1^2x_2 + x_1^2x_3+x_1x_2^2+x_1x_2x_3+x_2^2x_3)+\beta(x_1^2x_2^2+2x_1^2x_2x_3+2x_1x_2^2x_3)+\beta^2$\cb{ x_1^2x_2^2x_3}\\[1mm]
$(1,1,1)$     & \cb{x_1x_2x_3}\\[1mm]
$(3,1,0)^\dagger$     & \cb{x_1^3x_2} \\[1mm]
$(3,0,1)^\dagger$     & $(x_1^3x_2+x_1^3x_3)+\beta$\cb{x_1^3x_2x_3}\\[1mm]
$(2,2,0)$     & \cb{x_1^2x_2^2} \\[1mm]
$(2,1,1)$     & \cb{x_1^2x_2^2} \\[1mm]
$(1,2,1)$     & $(x_1^2x_2x_3+x_1x_2^2x_3)+\beta$\cb{x_1^2x_2^2x_3}\\[1mm]
$(3,2,0)^\dagger$     & \cb{x_1^3x_2^2} \\[1mm]
$(3,1,1)$     & \cb{x_1^3x_2x_3} \\[1mm]
$(2,2,1)$     & \cb{x_1^2x_2^2x_3} \\[1mm]
$(3,2,1)^\dagger$     & \cb{x_1^3x_2^2x_3} \\ \hline
\end{tabular}
\caption{Lascoux polynomials in $C_4$. $\dagger$ refers to snowy weak compositions.}
\label{table.lascoux}
\end{table}
\bibliographystyle{alpha}
\bibliography{revision}{}
\end{document}